\numberwithin{equation}{section}
\numberwithin{table}{section}
\numberwithin{figure}{section}
\def\Kahler{{K\"ahler}}
\def\naive{{na\"ive}}
\def\cf{cf.~}
\newcommand{\rb}[1]{\raisebox{1.5ex}[0pt]{#1}}
\newcommand{\hsp}[1]{{\hbox{\hspace{#1}}}}
\newcommand{\mystack}[2]{\ensuremath{ \substack{ \hbox{\tiny{${#1}$}} \\ \hbox{\tiny{${#2}$}} }} }
\newcounter{letcnt} 
\def\a{\alpha}  
\def\b{\beta}  
\def\d{\delta}
\def\z{\zeta}
\def\m{\mu}
\def\s{\sigma}
\def\w{\omega} 
\def\x{\xi}
\def\tAd{\mathrm{Ad}} 
\def\tAut{\mathrm{Aut}}
\def\cB{\mathcal B}
\def\fb{\mathfrak{b}} 
 \def\tbd{\mathrm{bd}}
\def\bC{\mathbb C} \def\cC{\mathcal C}
 \def\tcl{\mathrm{cl}}
\def\tcodim{\mathrm{codim}} 
 \def\sD{\mathscr{D}}
\def\td{\mathrm{d}}
 \def\tdim{\mathrm{dim}}
 \def\sE{\mathscr{E}}
 \def\ttE{\mathtt{E}}
\def\tEnd{\mathrm{End}} 
\def\be{\mathbf{e}} \def\fe{\mathfrak{e}} 
 \def\teven{\mathrm{even}}
\def\texp{\mathrm{exp}}
\def\sF{\mathscr{F}}
\def\ff{\mathfrak{f}}  
  \def\ttF{\mathtt{F}}
\def\tFlag{\mathrm{Flag}}
\def\cG{\mathcal G} 
\def\tGr{\mathrm{Gr}}
\def\fg{{\mathfrak{g}}} 
\def\fgl{\mathfrak{gl}}
 \def\ttH{\mathtt{H}}
 \def\sH{\mathscr{H}}
\def\tHom{\mathrm{Hom}}
\def\fh{\mathfrak{h}} 
\def\bh{\mathbf{h}}
\def\bi{\mathbf{i}} 
\def\tti{\mathtt{i}}
 \def\tIm{\mathrm{Im}}
 \def\tim{\mathrm{im}}
\def\fk{\mathfrak{k}} 
\def\bbk{\mathbbm{k}}
 \def\tker{\mathrm{ker}}
\def\cL{\mathcal L} 
\def\fl{\mathfrak{l}}
 \def\tmin{\mathrm{min}}
 \def\cN{\mathcal N}
\def\bn{\mathbf{n}}
\def\tNilp{\mathrm{Nilp}}
 \def\cO{\mathcal O}
 \def\todd{\mathrm{odd}}
\def\bP{\mathbb P} \def\cP{\mathcal P}
\def\fp{\mathfrak{p}} 
\def\tprim{\mathrm{prim}}
\def\tprin{\mathrm{prin}} 
\def\bQ{\mathbb Q} \def\cQ{\mathcal Q}
\def\bR{\mathbb R}
\def\trank{\mathrm{rank}}
\def\sS{\mathscr{S}}
\def\ttS{\mathtt{S}} 
\def\fs{\mathfrak{s}}
\def\tss{\mathrm{ss}}
\def\tSL{\mathrm{SL}} \def\tSO{\mathrm{SO}}
\def\tSp{\mathrm{Sp}} 
\def\tStab{\mathrm{Stab}} 
\def\tSU{\mathrm{SU}}
 \def\tspan{\mathrm{span}}
\def\fsl{\mathfrak{sl}} \def\fso{\mathfrak{so}} 
\def\fsp{\mathfrak{sp}} \def\fsu{\mathfrak{su}}
\def\ft{\mathfrak{t}}
\def\fu{\mathfrak{u}}
 \def\sV{\mathscr{V}}
\def\cW{\mathcal W} \def\sW{\mathscr{W}}
\def\by{\mathbf{y}}
   \def\bZ{\mathbb Z}
 \def\sZ{\mathscr{Z}}
\def\fz{\mathfrak{z}} 
 \def\bz{\mathbf{z}}
\def\half{\tfrac{1}{2}}
\def\one{\mathbbm{1}}
\def\tand{\quad\hbox{and}\quad}
\def\sb{{\hbox{\tiny{$\bullet$}}}}
\def\inj{\hookrightarrow}
\def\sur{\twoheadrightarrow}
\def\op{\oplus}
\def\ot{\otimes}
\def\tw{\hbox{\small $\bigwedge$}}
\def\wtL{{\Lambda_\mathrm{wt}}}
\def\rtL{{\Lambda_\mathrm{rt}}}
\newcounter{numcnt}
\newenvironment{numlist}{
   \begin{list}{{\small{(\arabic{numcnt})}}}
   {\usecounter{numcnt} 
    \setlength{\itemsep}{3pt}
    \setlength{\leftmargin}{25pt} 
    \setlength{\labelwidth}{20pt} 
    \setlength{\listparindent}{20pt} }
   }
   {\end{list}}
\newcounter{cnt}
\newcounter{acnt}
\newenvironment{a_list}{ 
  \begin{list}{{(\alph{acnt})}}
   {\usecounter{acnt} \setlength{\itemsep}{3pt}
    \setlength{\leftmargin}{25pt} 
    \setlength{\labelwidth}{20pt}
    \setlength{\listparindent}{20pt} }
   }
   {\end{list}}
\newenvironment{a_list_emph}{ 
  \begin{list}{{\emph{(\alph{acnt})}}}
   {\usecounter{acnt} \setlength{\itemsep}{3pt}
    \setlength{\leftmargin}{25pt} 
    \setlength{\labelwidth}{20pt}
    \setlength{\listparindent}{20pt} }
   }
   {\end{list}}
\newcounter{Acnt}
\newcounter{icnt}
\newenvironment{i_list}{ 
  \begin{list}{{(\roman{icnt})}}
   {\usecounter{icnt} 
    \setlength{\itemsep}{3pt} 
    \setlength{\parsep}{0pt} 
    \setlength{\topsep}{0pt} 
    \setlength{\leftmargin}{25pt} 
    \setlength{\labelwidth}{20pt}
    \setlength{\listparindent}{20pt} }
   }
   {\end{list}}
\newcounter{Icnt}
\newcounter{exam_cnt}
\newcounter{mccnt}
\newtheorem{corollary}[equation]{Corollary}
\newtheorem{lemma}[equation]{Lemma}
\newtheorem{proposition}[equation]{Proposition}
\newtheorem{theorem}[equation]{Theorem}
\theoremstyle{definition}
\newtheorem*{boldQ*}{Question}
\newtheorem*{boldP*}{Problem}
\theoremstyle{definition}
\theoremstyle{remark}
\newtheorem*{assume*}{Assume}
\newtheorem*{answer*}{Answer}
\newtheorem{claim}[equation]{Claim}
\newtheorem*{claim*}{Claim}
\newtheorem*{definition*}{Definition}
\newtheorem{example}[equation]{Example}
\newtheorem*{example*}{Example}
\newtheorem*{hint*}{Hint}
\newtheorem*{notation*}{Notation}
\newtheorem{remark}[equation]{Remark}
\newtheorem*{remark*}{Remark}
\newtheorem*{remarks*}{Remarks}
\newtheorem*{fact*}{Fact}
\newtheorem*{emphL*}{Lemma}
\newtheorem*{emphQ*}{Question}
\newtheorem*{emphA*}{Answer}
\def\clO{{\cO_\mathrm{cl}}}
\def\tNC{\mathcal{C}}
\def\bi{\hbox{\small{$\mathbf{i}$}}}
\def\CS{MR2532439}
\def\CKpmhs{MR664326}
\def\CKextn{MR0432925}
\def\CKSdeg{MR840721}
\def\CKScoh{MR870728}
\def\CoMc{MR1251060}
\def\DeligneII{MR0498551}
\def\FHW{MR2188135}
\def\GGK{MR2918237}
\def\GGKtcu{MR3115136}
\def\Knapp{MR1920389}
\def\Noel{MR1600330}
\def\Schmid{MR0382272}
\begin{document}
\title{Classification of horizontal $\tSL(2)$s}
\author[Robles]{Colleen Robles}
\email{robles@math.duke.edu}
\address{Mathematics Department, Duke University, Box 90320, Durham, NC 27708-0320} 
\date{\today}
\subjclass[2010]
{
 14D07, 32G20, 
 17B08,  
 58A14. 
}
\keywords{Horizontal $\tSL(2)$, nilpotent orbit, polarized mixed Hodge structure, Hodge--Tate degeneration}
\thanks{This work is supported by the National Science Foundation though the grants DMS-1309238, 1361120, and was undertaken while a member of the Institute for Advanced Study; I am grateful to the institute for a wonderful working environment and the Robert and Luisa Fernholz Foundation for financial support.
}
\begin{abstract}
We classify the horizontal $\tSL(2)$s and $\bR$--split polarized mixed Hodge structures on a Mumford--Tate domain.
\end{abstract}

\maketitle

\section{Introduction}

A variation of (pure, polarized) Hodge structure gives rise to a horizontal holomorphic mapping into a flag domain $D$; here \emph{horizontal} indicates that the image of the map satisfies a system of partial differential equations known as the \emph{infinitesimal period relation} (or \emph{Griffiths' transversality condition}).  Such maps arise as (lifts of) period mappings associated with families of polarized algebraic manifolds.  
The celebrated Nilpotent Orbit and $\tSL(2)$--Orbit Theorems of Schmid \cite{\Schmid} and Cattani--Kaplan--Schmid \cite{\CKSdeg}, describe the asymptotic behavior of a horizontal mapping, and play a fundamental r\^ole in the analysis of singularities of the period mapping (equivalently, degenerations of Hodge structure), \cf the work of Kato and Usui \cite{MR2465224}.  Two of the more striking applications of the Nilpotent and $\tSL(2)$--Orbit Theorems are: (i) Cattani, Deligne and Kaplan's \cite{MR1273413} proof of the algebraicity of Hodge loci, which provides some of the strongest evidence for the Hodge conjecture; and (ii) the proof of Deligne's conjectured isomorphism between the $L^2$ and intersection cohomologies of a polarized variation of Hodge structure with normal crossing singularities over a compact \Kahler~manifold (first proved by Zucker \cite{MR534758} in the case of a one-dimensional base, followed by Cattani and Kaplan's \cite{MR786917} proof in the case of dimension two and weight one, with the general case established, independently, by Cattani, Kaplan and Schmid \cite{MR870728}, and Kashiwara and Kawai \cite{MR804358}), and the resulting corollary that the intersection cohomology carries a pure Hodge structure.

As a consequence it became an important problem to describe the $\tSL(2)$s appearing in Schmid's Theorem, and to that end partial results were obtained by Cattani and Kaplan \cite{MR0432925, MR496761}, and Usui \cite{MR1245714}.  The main result (Theorem \ref{T:cSL2}) of the paper is a classification of those objects.  It is a corollary of Theorem \ref{T:cPMHS}, which classifies the $\bR$--split polarized mixed Hodge structures (PMHS), and the familiar equivalence 
\begin{equation}\label{E:equiv}
  \left\{ 
  \hbox{$\bR$--split PMHS on $D$}
  \right\} \ \longleftrightarrow \ 
  \left\{ \hbox{horizontal $\tSL(2)$s on $D$} \right\} \,,
\end{equation}
which follows from the work of Cattani, Deligne, Kaplan and Schmid \cite{MR664326, CattaniKaplanIHES, MR840721, DeligneApp}. These results are established for Mumford--Tate domains \cite{MR2918237}; the latter generalize period domains to include classifying spaces for Hodge structures with nongeneric Hodge tensors, i.e., the Mumford--Tate group of a generic Hodge structure in the domain need not be the full automorphism group 
\[
  \cG_\bR \ = \ \tAut(V_\bR,Q)\,.
\]
A Mumford--Tate domain $D$ is homogeneous with respect to a real, reductive Lie group 
\[
  G_\bR \ \subset \ \tAut(V_\bR,Q)\,.
\]
In particular, the classification describes (as $\bR$--split PMHS) the degenerations that may arise in a variation of Hodge structure subject to the constraint that (the connected identity component of) the Mumford--Tate group of the generic fibre lies in $G_\bR$.  There is a natural action of $G_\bR$ on both the horizontal $\tSL(2)$s and the $\bR$--split PMHS; the classification theorems enumerate these objects up to the action of $G_\bR$, which we assume to be connected.

Cattani has pointed out that the problem of classifying horizontal $\tSL(2)$--orbits in \emph{period domains} is essentially solved by the possible Hodge diamonds.  This is a consequence of: (i) the equivalence \eqref{E:equiv}; (ii) the classification of subalgebras $\fsl_2\bR \subset \fg_\bR$ by signed Young diagrams when $\fg_\bR$ is classical \cite[Chapter 9]{\CoMc}; and (iii) the fact that the signed Young diagram is determined by the Hodge diamond, \cf \cite{BPR}.  One subtlety to keep in mind here is that the Hodge diamonds suffice to classify the $\tSL(2)$--orbits up to the action of the full automorphism group $\cG_\bR$.  However, in the case of even weight, the Hodge diamonds do \emph{not} suffice to classify the orbits up to the action of the connected identity component $\cG_\bR^\circ$.  This is essentially due to the fact that the signed Young diagrams classify the $\fsl_2\bR \subset \tEnd(V_\bR,Q)$ up to the adjoint action of $\cG_\bR$; and some of these $\cG_\bR$--conjugacy classes decompose into distinct $\cG_\bR^\circ$--conjugacy classes which the signed Young diagram/Hodge diamond fails to distinguish.  (See \S\ref{S:egPMHS} for further discussion and examples.)  I assume throughout that $G_\bR$ is connected.  A second point to keep in mind is that the classification of subalgebras $\fsl_2\bR \subset \fg_\bR$ by signed Young diagrams requires that $\fg_\bR$ not only be classical, but also act by the ``standard representation.''  However, even when $\fg_\bR$ is classical, it may not be possible to realize $D$ as the Mumford--Tate domain for a Hodge representation\footnote{Defined in \S\ref{S:Hrep}.} $(G_\bR,V_\bR,\varphi,Q)$ with $V_\bR$ the standard representation.  This means that in general we will not be able to classify the horizontal $\tSL(2)$--orbits on $D$ by Hodge diamonds when $D$ cannot be realized as a period domain.


A second motivation behind Theorem \ref{T:cPMHS} is the problem to identify polarizable orbits.  Recall that the flag domain $D$ is an open $G_\bR$--orbit in the compact dual $\check D = G_\bC/P$.  In particular, the boundary $\tbd(D) \subset \check D$ is a union of $G_\bR$--orbits.  We say that one of these boundary orbits is \emph{polarizable} if it contains the limit of a nilpotent orbit, \cf\cite{MR3115136, KP2013} and \S\ref{S:rlpm}.\footnote{This notion of a ``polarized'' orbit is distinct from J.~Wolf's in \cite[Definition 9.1] {MR0251246}.  In Wolf's sense, the polarized orbits $\cO = G_\bR \cdot o$ in $\check D$ are those that realize the minimal CR--structure on the homogeneous manifold $G_\bR/\tStab_{G_\bR}(o)$, \cf\cite[Remark 5.5]{MR2668874}.}  We think of these as the ``Hodge theoretically accessible'' orbits.  Then the natural partial order on the $G_\bR$--orbits in $\tbd(D)$ allows one to address, from a Hodge theoretic perspective, the question ``what is the most/least singular variety to which a smooth projective variety can degenerate?''~\cite{GGR}.  Theorem \ref{T:cPMHS}(c) parameterizes the polarizable orbits (\S\ref{S:po}), and from that point of view generalizes \cite[Theorem 6.38]{KR1}.  The parameterization is surjective by definition, and is shown to be injective in the forthcoming \cite{KR2}.  As a corollary to Theorem \ref{T:cPMHS}, and the fact that all codimension--one orbits $\cO \subset \tbd(D)$ are polarized \cite{KP2013}, we obtain a precise count of the number of codimension--one orbits in $\tbd(D)$ (Proposition \ref{P:codim1}); in the case that $P$ is a maximal parabolic, this recovers \cite[Proposition 6.56]{KR1}.

The key observation in the proof of Theorem \ref{T:cPMHS} is that underlying every $\bR$--split PMHS is a Hodge--Tate degeneration (Theorem \ref{T:underHT}), and from the latter we may recover the original $\bR$--split PMHS.  Consequently, the \emph{sine qua non} of the paper is the classification of the Hodge--Tate degenerations (Theorem \ref{T:cHT}).  Theorem \ref{T:underHT} may be viewed as describing the branching of a $\fg_\bR$--Hodge representation under a Levi algebra $\fl_\bR \subset \fg_\bR$, \cf~Remark \ref{R:underHT1}.  Let $L_\bR \subset G_\bR$ be the connected Lie subgroup with Lie algebra $\fl_\bR$.  As a corollary to Theorem \ref{T:underHT}, Mal'cev's Theorem and a result of Cattani, Kaplan and Schmid we find that the (open) nilpotent cone $\cC \subset\fg_\bR$ underlying a nilpotent orbit is contained in an $\tAd(L^Y_\bR)$--orbit, where $L^Y_\bR \subset L_\bR$ is a connected, reductive Lie group (Corollary \ref{C:Ad-orb}).

Both the statements of the classification theorems and their proofs are couched in representation theory; the necessary background material is reviewed in \S\ref{S:rtb}.  Both Levi subalgebras, and their ``distinguished'' parabolic subalgebras, play a key r\^ole in the classification theorems.  This is not surprising as Bala and Carter's classification \cite{MR0417306, MR0417307} of the $\fsl_2\bC$'s in a complex semisimple $\fg_\bC$ is in terms of these pairs.  Indeed, Theorem \ref{T:cSL2} could be viewed as the analog the Bala--Carter classification for horizontal $\fsl_2\bR$'s, and from this perspective is related to both Vinberg's classification \cite{MR0506488} of nilpotent elements of graded Lie algebras, and No\"el's classification \cite{\Noel} of (not necessarily horizontal) $\fsl_2\bR \subset \fg_\bR$.  The pertinent Hodge--theoretic material is reviewed in \S\ref{S:htb}.

As I hope the examples presented here (most are concentrated in \S\S\ref{S:egHT} and \ref{S:egPMHS}) demonstrate, the classifications are computationally accessible: it is straightforward to describe the horizontal $\tSL(2)$s and the Deligne splittings of the associated $\bR$--split PMHS.  Here is one illustrative example.

\begin{example}
The exceptional simple Lie group $F_4$ of rank four admits a real form $G_\bR$ with maximal compact subalgebra $\fsp(2) \op \fsu(2)$.\footnote{This real form is commonly denoted by $\mathrm{F\,I}$ or $F_4(4)$.}  This real form admits a real Hodge representation $V_\bR$ with Hodge numbers $(6,14,6)$;\footnote{The highest weight of $V_\bC$ the fourth fundamental weight.} in particular, $G_\bR \subset \tSO(14,12)$.  The horizontal distribution is a holomorphic contact distribution\footnote{See \cite{KR1} for further discussion of the case that the horizontal distribution is contact.} on the associated domain $D$.  Theorem \ref{T:cSL2} identifies four horizontal $\tSL(2)$s.  The Hodge diamonds of the corresponding $\bR$--split PMHS are depicted below; see \S\ref{S:egPMHS} for further explanation of these diagrams and the notations $\fl_\bR^\tss$, $\sS'$, $\sZ$ and $\cO$ below.
\begin{center}
\begin{footnotesize}
\setlength{\unitlength}{12pt}
\begin{picture}(5,9)(0,-5)
\put(0,0){\vector(1,0){3}} \put(0,0){\vector(0,1){3}}  \put(0,1){\circle*{0.25}} \put(0,2){\circle*{0.25}} \put(1,0){\circle*{0.25}} \put(1,1){\circle*{0.25}} \put(1,2){\circle*{0.25}} \put(2,0){\circle*{0.25}} \put(2,1){\circle*{0.25}}
\put(-1,-1){$\fl_\bR^\tss = \fsl_2\bR$}
\put(-1,-3){$\sZ = (-2,1,0,0)$}
\put(-1,-2){$\sS' = \{ \s_1\}$}
\put(-1,-4){$\tcodim\,\cO = 1$}
\end{picture}
\hspace{15pt}
\begin{picture}(6,9)(0,-5)
\put(0,0){\vector(1,0){3}} \put(0,0){\vector(0,1){3}}  \put(0,0){\circle*{0.25}} \put(0,1){\circle*{0.25}} \put(0,2){\circle*{0.25}} \put(1,0){\circle*{0.25}} \put(1,1){\circle*{0.25}} \put(1,2){\circle*{0.25}} \put(2,0){\circle*{0.25}} \put(2,1){\circle*{0.25}} \put(2,2){\circle*{0.25}}
\put(-1,-1){$\fl_\bR^\tss = \fsl_2\bR$}
\put(-1,-3){$\sZ = (-2,0,0,1)$}
\put(-1,-2){$\sS' = \{ \s_1+\s_2+\s_3\}$}
\put(-1,-4){$\tcodim\,\cO = 5$}
\end{picture}
\hspace{15pt}
\begin{picture}(8,9)(0,-5)
\put(0,0){\vector(1,0){3}} \put(0,0){\vector(0,1){3}}  \put(0,0){\circle*{0.25}} \put(0,1){\circle*{0.25}} \put(1,0){\circle*{0.25}} \put(1,1){\circle*{0.25}} \put(1,2){\circle*{0.25}} \put(2,1){\circle*{0.25}} \put(2,2){\circle*{0.25}}
\put(-1,-1){$\fl_\bR^\tss = \fsl_2\bR \times\fsl_2\bR$}
\put(-1,-4){$\sZ = (-2,1,-1,1)$}
\put(-1,-2){$\sS' = \{ \s_1+\s_2+2\s_3+\s_4 \,,$}
\put(1.1,-3){$\s_1+2\s_2+2\s_3\}$}
\put(-1,-5){$\tcodim\,\cO = 8$}
\end{picture}
\hspace{15pt}
\begin{picture}(8,9)(0,-5)
\put(0,0){\vector(1,0){3}} \put(0,0){\vector(0,1){3}}  \put(0,0){\circle*{0.25}} \put(1,1){\circle*{0.25}} \put(2,2){\circle*{0.25}}
\put(-1,-1){$\fl_\bR^\tss = \fsu(2,1)$}
\put(-1,-4){$\sZ = (-2,0,0,0)$}
\put(-1,-2){$\sS' = \{ \s_1+\s_2+2\s_3+2\s_4 \,,$}
\put(1.1,-3){$\s_1+2\s_2+2\s_3\}$}
\put(-1,-5){$\tcodim\,\cO = 15$}
\end{picture}
\end{footnotesize}
\end{center}
\end{example}

Finally, I wish to mention that an inductive argument based on Theorem \ref{T:cSL2} yields a classification of the commuting $\tSL(2)$s in Cattani, Kaplan and Schmid's several--variables $\tSL(2)$--Orbit Theorem, as will be demonstrated in the forthcoming work \cite{KR2} with Matt Kerr in which we will also establish the injectivity of the parameterization of the polarized orbits by Theorem \ref{T:cPMHS}.

\tableofcontents

\subsubsection*{Einstein summation convention}  When an index appears as both a subscript and a superscript in a formula, it is meant to be summed over.  For example, $z^iN_i$ denotes $\sum_i z^i N_i$.

\subsection*{Acknowledgements}  
Over the course of this work I have benefitted from conversations and correspondence with several colleagues; I would especially like to thank Eduardo Cattani, Mark Green, Phillip Griffiths, Tatsuki Hayama, Matt Kerr, William M.~McGovern and Greg Pearlstein.  I also thank the anonymous referee for ameliorating suggestions.

\section{Representation theory background} \label{S:rtb}

\subsection{Parabolic subgroups and subalgebras} \label{S:P}

Let $G_\bC$ be a connected, complex semisimple Lie group, and let $P \subset G_\bC$ be a parabolic subgroup.  Fix \emph{Cartan} and \emph{Borel subgroups} $H \subset B \subset P$.  Let $\fh \subset \fb \subset\fp\subset\fg$ be the associated Lie algebras.  The choice of Cartan determines a set of \emph{roots} $\Delta = \Delta(\fg,\fh) \subset \fh^*$.  Given a root $\a\in \Delta$, let $\fg^\a \subset \fg$ denote the \emph{root space}.  Given a subspace $\fs \subset \fg$, let 
$$
  \Delta(\fs) \ := \ \{ \a\in\Delta \ | \ \fg^\a \subset \fs \} \,.
$$ 
The choice of Borel determines \emph{positive roots} $\Delta^+ = \Delta(\fb) = \{ \a\in\Delta \ | \ \fg^\a \subset \fb \}$.  Let $\sS = \{\s_1,\ldots,\s_r\}$ denote the \emph{simple roots}, and set 
\begin{equation} \label{E:I}
  I \ = \ I(\fp) \ := \ \{ i \ | \ \fg^{-\s_i} \not\subset \fp \}\,.
\end{equation}
Note that the parabolic $\fp$ is maximal if and only if $I = \{\tti\}$; in this case we say that $\s_\tti$ is the \emph{simple root associated with the maximal parabolic $\fp$}.  Likewise, $\fp = \fb$ if and only if $I = \{1,\ldots,r\}$.

Every parabolic $P \subset G_\bC$ is $G_\bC$--conjugate to one containing $B$.  Thus, the conjugacy classes $\cP_I$ of parabolic subgroups are indexed by the subsets $I \subset \{ 1 , \ldots , r\}$.  Let $\cB = \cP_{\{1,\ldots,r\}}$ denote the conjugacy class of the Borel subgroups.  

\subsection{Grading elements and Levi subalgebras} \label{S:GE}

Given a choice of Cartan subalgebra $\fh \subset \fg_\bC$, let $\rtL \subset \fh^*$ denote the root lattice.  The set of \emph{grading elements} is the lattice $\tHom(\rtL,\bZ) \subset \fh$ taking integral values on roots.  As an element of the Cartan subalgebra a grading element $\ttE$ is necessarily semisimple.  Therefore, any $\fg_\bC$ module $V_\bC$ decomposes into a direct sum of $\ttE$--eigenspaces
\begin{equation} \label{E:VE}
  V_\bC \ = \ \bigoplus_{\ell \in \bQ} V^\ell
  \quad\hbox{where}\quad
  V^\ell \ = \ \{ v \in V_\bC \ | \ \ttE(v) = \ell v \} \,.\footnote{To see that the eigenvalues are necessarily rational, it suffices to observe that the eigenvalues are $\lambda(\ttE)$, where $\lambda \in \wtL \subset \fh^*$ is a weight of $V_\bC$, and to recall that the weights of $\fg_\bC$ are rational linear combinations of the roots.}
\end{equation}

When specialized to $V_\bC = \fg_\bC$, \eqref{E:VE} yields
\begin{subequations} \label{SE:grading}
\begin{equation}
  \fg \ = \ \bigoplus_{\ell\in\bZ} \fg^\ell
\end{equation}
where
\begin{equation} \label{E:grT}
  \fg^\ell \ := \ \{ \xi \in \fg \ | \ [\ttE,\xi] = \ell \xi \} \,.
\end{equation}
In terms of root spaces, we have
\begin{equation} \label{E:gr1} \renewcommand{\arraystretch}{1.3}
\begin{array}{rcl}
  \displaystyle \fg^\ell & = & 
  \displaystyle \bigoplus_{\a(\ttE)=\ell} \fg^\a \,,\quad \ell \not=0 \,,\\
  \displaystyle \fg^0 & = & 
  \displaystyle \fh \ \op \ \bigoplus_{\a(\ttE)=0} \fg^\a \,.
\end{array}
\end{equation}
\end{subequations}
The $\ttE$--eigenspace decomposition \eqref{SE:grading} is a \emph{graded Lie algebra decomposition} in the sense that 
\begin{equation}\label{E:gr}
  \left[ \fg^\ell  , \fg^m \right] \ \subset \ \fg^{\ell+m} \,,
\end{equation}
a straightforward consequence of the Jacobi identity.  It follows that 
\begin{equation} \label{E:p}
  \fp_\ttE \ = \ \fg^0 \,\op \,\fg^+ 
\end{equation}
is a Lie subalgebra of $\fg_\bC$; we call this the \emph{parabolic subalgebra determined by the grading element $\ttE$}.  

From \eqref{E:gr} we also see that $\fg^0$ is a Lie subalgebra of $\fg$ (in fact, reductive), and each $\fg^\ell$ is a $\fg^0$--module.  In general, by \emph{Levi subalgebra} we will mean any subalgebra $\fl_\bC \subset \fg_\bC$ that can be realized as the $0$--eigenspace $\fg^0$ of a grading element.  

\begin{remark}\label{R:levi}
By the second equation of \eqref{E:gr1} every Levi subalgebra contains a Cartan subalgebra of $\fg_\bC$.  Fix a Cartan subalgebra $\fh \subset \fg_\bC$ and recall that the Weyl group $\sW \subset \tAut(\fh^*)$ is generated by the reflections in the hyperplanes orthogonal to the roots $\a \in \Delta$.  Fix a choice of simple roots $\sS \subset \Delta \subset \fh^*$.  The Levi subalgebras containing $\fh$ are bijective correspondence with the subsets $\{ w\sS' \subset \Delta \ | \ w \in \sW \,,\ \sS' \subset \sS \}$: $w\sS'$ is a set of simple roots for the semisimple factor $\fl^\tss_\bC = [\fl_\bC,\fl_\bC]$ of the Levi subalgebra $\fl_\bC \supset \fh$.   In particular, there exist only finitely many Levi subalgebras containing $\fh$.  
\end{remark}

\begin{remark}\label{R:weyl}
Recall that the simple reflections $(i) \in \sW$ in the hyperplanes orthogonal to the simple roots $\s_i \in \sS$ form a minimal set of generators for the Weyl group.  Given a Levi subalgebra $\fl_\bC \supset \fh$, by replacing $\sS$ with $w\sS$ (the latter is also a set of simple roots for $\fh$), we may assume that the simple roots of $\fl_\bC^\tss$ are a subset $\sS'$ of the simple roots $\sS$ of $\fg_\bC$.  Then the Weyl group $\sW'$ of $\fl_\bC$ is generated by the simple reflections $(i) \in \sW$ with $\s_i \in \sS'$.  
\end{remark}

\noindent Given a real form $\fg_\bR$ of $\fg_\bC$, we will say that $\fl_\bR \subset \fg_\bR$ is a Levi subalgebra if the complexification $\fl_\bC = \fl_\bR \ot_\bR \bC$ is a Levi subalgebra of $\fg_\bC$; equivalently, a \emph{Levi subalgebra of the real form $\fg_\bR$} is the real form $\fl_\bR$ of a conjugation--stable Levi subalgebra $\fl_\bC \subset \fg_\bC$.  

Let $\{ \ttS^1 , \ldots , \ttS^r\}$ be the basis of $\fh$ dual to the simple roots $\{\s_1,\ldots,\s_r\}$.  Then any grading element $\ttE = n_i \ttS^i$ is an integral linear combination of the $\{\ttS^i\}$; if $\fp_\ttE$ contains the Borel $\fb \supset \fh$ determining the simple roots, then $n_i \ge 0$ for all $i$.  In this case, the index set \eqref{E:I} is 
\[
  I(\fp_\ttE) \ = \ \{ i \ | \ n_i > 0 \} \,,
\]
and the reductive Levi subalgebra $\fg^0 = \fg^0_\mathrm{ss} \op \fz$ has center $\fz = \tspan_\bC\{ \ttS^i \ | \ i \in I(\fp_\ttE) \}$ and semisimple subalgebra $\fg^0_\mathrm{ss} = [\fg^0,\fg^0]$.  A set of simple roots for $\fg^0_\mathrm{ss}$ is given by $\sS(\fg_0) = \{ \s_j \ | \ j \not \in I(\fp_\ttE) \}$.  I emphasize that the sets $\sS(\fg^0)$ and $I(\fp_\ttE)$ encode the same information which describes the $G_\bC$--conjugacy class $\cP_\ttE$ of the parabolic subgroup $P_\ttE$.

Two distinct grading elements may determine the same parabolic $\fp$.  For example, any positive multiple $n\ttS^\tti$ will determine the same (maximal) parabolic as $\ttS^\tti$.  However given a parabolic $\fp$, and a choice of Cartan and Borel subalgebras $\fh \subset \fb\subset\fp$, there is a canonical choice of grading element $\ttE =\ttE_\fp$ with $\fp_\ttE = \fp$ such that $\fg^{\pm1}$ generates the nilpotent $\fg^\pm$ as a subalgebra.  The \emph{grading element associated to} $\fp\supset\fb\supset \fh$ is
\begin{equation} \label{E:ttE}
  \ttE_\fp \ := \ \sum_{i \in I(\fp)} \ttS^i \,.
\end{equation}

For more detail on grading elements and parabolic subalgebras see \cite[\S2.2]{MR3217458} and the references therein.  

\subsection{Standard triples and TDS} \label{S:sl2trp}

Let $\fg$ be a Lie algebra defined over a field $\bbk = \bR,\bC$.  A \emph{standard triple} in $\fg$ is a set of three elements $\{ N^+ , Y , N\} \subset \fg$ such that 
$$
  [Y , N^+] \ = \ 2\,N^+ \,,\quad
  [N^+,N] \ = \ Y \tand
  [Y,N] \ = \ -2\,N \,.
$$
Note that $\{N^+,Y,N\}$ span a three--dimensional semisimple subalgebra (TDS) of $\fg$ isomorphic to $\fsl_2\bbk$.  We call $Y$ the \emph{neutral element}, $N$ the \emph{nilnegative element} and $N^+$ the \emph{nilpositive element}, respectively, of the standard triple.  The Jacobson--Morosov theorem asserts that every nilpotent $N \in \fg$ can be realized as the nilnegative of a standard triple.

\begin{example} \label{eg:stdtri_1}
The matrices 
\begin{equation} \label{E:stdtri_sl2}
  \bn^+ \ = \ \left(\begin{array}{cc} 0 & 1 \\ 0 & 0 \end{array}\right) \,,\quad 
  \by \ = \ \left(\begin{array}{cc} 1 & 0 \\ 0 & -1 \end{array}\right) \tand
  \bn \ = \ \left(\begin{array}{cc} 0 & 0 \\ 1 & 0 \end{array}\right)
\end{equation}
form a standard triple in $\fsl_2\bR$; while the matrices
\begin{equation} \label{E:stdtri_su11}
  \overline\be \ = \ \half \left(\begin{array}{cc} -\bi & 1 \\
            1 & \bi \end{array}\right) \,,\quad
  \bz \ = \ \left(\begin{array}{cc} 0 & -\bi \\ 
           \bi & 0 \end{array}\right) \tand
  \be = \half \left(\begin{array}{cc} \bi & 1 \\ 
            1 & -\bi \end{array}\right)
\end{equation}
form a standard triple in $\fsu(1,1)$.
\end{example}

\subsection{Jacobson--Morosov filtrations} \label{S:JMf}

Given a standard triple $\{N^+ , Y , N \} \subset \fg$ and a representation $\fg \inj \tEnd(V)$ of $\fg$, the theory of $\fsl_2\bbk$--representations implies that the eigenvalues $\ell$ of $Y$ are integers.  In the case that $V = \fg$, this implies that 
\begin{equation}\label{E:Y=GE}
\hbox{\emph{the neutral element $Y$ is a grading element.}}
\end{equation}

The \emph{Jacobson--Morosov filtration} (or \emph{weight filtration}) \emph{of $N$} is the unique filtration $W_\sb(N,V)$ of $V$ with the properties:
\begin{i_list}
\item
The filtration is increasing, $W_\ell(N,V) \subset W_{\ell+1}(N,V)$.
\item The nilpotent $N$ maps $W_\ell(N,V)$ into $W_{\ell-2}(N,V)$.
\item The induced map $N^\ell : \tGr_\ell W(N,V) \to \tGr_{-\ell}W(N,V)$ is isomorphism for all $\ell \ge 0$, where 
\[
  \tGr_k W(N,V) \ := \ W_k(N,V)/W_{k-1}(N,V) \,.
\]
\end{i_list}
If $V = \op V_\ell$ is the $Y$--eigenspace decomposition, $V_\ell = \{ v \in V \ | \ Y(v) = \ell v\}$, then
\begin{equation} \label{E:JMf}
  W_\ell(N,V) \ = \ \bigoplus_{m \le \ell} V_\ell \,.
\end{equation}
Note that 
\[
  W_0(N^+,\fg) \ = \ \fp_Y \,.
\]
Parabolic subalgebras of the form $W_0(N^+,\fg)$ are \emph{Jacobson--Morosov parabolics}.

\begin{remark}
  \emph{(a)}
Some parabolic subalgebras cannot be realized as Jacobson--Morosov parabolics, \cf Example \ref{eg:JMsl4}.  Similarly, not every grading element can be realized as the neutral element of a standard triple.
  
  \emph{(b)}
The neutral element $Y$ may not be a grading element $\ttE_\fp$ canonically associated with $\fp = W_0(N^+,\fg) \supset\fb\supset \fh$ by \eqref{E:ttE}.  Moreover, it is possible that there exist nilpotents $N_1$ and $N_2$ that are \emph{not} congruent under the action of $\tAd(G)$ on $\fg$ (equivalently, $Y_1$ and $Y_2$ are not congruent), but with $W_0(N_1,\fg) = W_0(N_2,\fg)$.  For an illustration of this, consider Example \ref{eg:SL_1} where we have $W_0(N_{[3,1]},\fg_\bC) = W_0(N_{[2,1^2]})$, but $Y_{[3,1]} = 2 Y_{[2,1^2]} = 2(\ttS^1 + \ttS^3)$.
\end{remark}

\subsection{$\tAd(G_\bC)$--orbits in $\tNilp(\fg_\bC)$} \label{S:rtno_C}

Given any Lie algebra $\fg$, let $\tNilp(\fg)$ denote the set of nilpotent elements.  A \emph{nilpotent orbit} is an $\tAd(G)$--orbit in $\tNilp(\fg)$.\footnote{Here we have a conflict in the nomenclature: the term ``nilpotent orbit'' is used in both representation theory and Hodge theory to refer to two distinct, but related objects (see \S\ref{S:NC} for the second).  Which of the two meanings is intended should be clear from the context.}  In this section we will review some properties of nilpotent orbits in a complex semisimple Lie algebra $\fg_\bC$, including their classification by ``characteristic vectors'' (a.k.a.~ ``weighted Dynkin diagrams'');\footnote{In the case that $\fg_\bC$ is a classical Lie algebra, the nilpotent orbits may be classified by partitions (or Young diagrams), see Example \ref{eg:SL_1} and \cite{BPR}.} an excellent reference for the discussion that follows is \cite{\CoMc}.

Given a nilpotent $N \in \fg_\bC$, fix a standard triple $\{ N^+ , Y , N\}$.  We may choose a Cartan subalgebra $\fh \subset \fg_\bC$ and a set of simple roots $\sS = \{\s_1,\ldots,\s_r\} \subset \fh^*$ such that $Y \in \fh$ and $\s_i(Y) \ge 0$ for all $i$.  The \emph{(complex) characteristic vector} 
\[
  \s(Y) \ := \ (\s_1(Y),\ldots,\s_r(Y))
\]
is independent of our choices, and is an invariant of the nilpotent orbit $\cN = \tAd(G_\bC)\cdot N \subset \fg_\bC$ through $N$, so that 
\[
  \s(\cN) \ := \ \s(Y)
\]  
is well--defined.  For the \emph{trivial orbit} $\cN_\mathrm{triv}=\{ 0 \} \subset \tNilp(\fg_\bC)$ we have $\s(\cN_\mathrm{triv}) = (0,\ldots,0)$.  The nilpotent orbits are characterized by their characteristic vectors: the following is \cite[Theorem 8.3]{MR0047629_trans}, see also \cite[Lemma 5.1]{MR0114875}.

\begin{theorem}[Dynkin] \label{T:Dynkin}
The characteristic vector $\s(\cN)$ is a complete invariant of a nilpotent orbit; that is, $\s(\cN) = \s(\cN')$ if and only if $\cN = \cN'$.  Moreover, $0 \le \s_i(\cN) \le 2$.
\end{theorem}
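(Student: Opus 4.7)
My plan has three steps: well-definedness of $\s(\cN)$, completeness, and the bound.

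\emph{Well-definedness and completeness of $\s$.} Given $N \in \cN$, the Jacobson--Morosov theorem furnishes a standard triple $\{N^+, Y, N\}$. By \eqref{E:Y=GE}, the neutral element $Y$ is a grading element, hence semisimple; after $\tAd(G_\bC)$-conjugation I may assume $Y \in \fh$, and after applying a Weyl group element that $\s_i(Y) \ge 0$ for every simple root $\s_i$. I would then invoke Mal'cev's conjugacy theorem for $\fsl_2$-triples: any two standard triples sharing a nilnegative element are conjugate by the connected centralizer of that element, and any two sharing a neutral element are conjugate by the centralizer of that element. Combined with the fact that each semisimple $\tAd(G_\bC)$-orbit meets $\fh$ in a single Weyl-group orbit, this yields that $\s(Y)$ depends only on $\cN = \tAd(G_\bC)\cdot N$. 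Conversely, if $\s(\cN) = \s(\cN')$, choose dominant neutrals $Y, Y' \in \fh$ for triples with nilnegatives $N \in \cN$ and $N' \in \cN'$; since $\{\s_1,\ldots,\s_r\}$ is a basis of $\fh^*$, the equality $\s_i(Y) = \s_i(Y')$ forces $Y = Y'$, whence Mal'cev supplies $g \in G_\bC$ with $\tAd(g)N = N'$.

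\emph{The bound $0 \le \s_i(\cN) \le 2$.} The lower bound is immediate from the choice of simple roots. For the upper bound, view $\fg_\bC$ as a module over the $\fsl_2$ spanned by the triple. The $\tad(Y)$-eigenspace decomposition \eqref{SE:grading} coincides with the weight decomposition of this $\fsl_2$-module, and the relation $Y = [N^+, N]$ with $N^+ \in \fg^2$ and $N \in \fg^{-2}$ shows, upon expanding in root vectors, that $Y$ lies in the $\bC$-span of the coroots $H_\gamma$ for positive roots $\gamma$ satisfying $\gamma(Y) = 2$. Writing $\gamma = \sum_j n_j \s_j$ with $n_j \in \bZ_{\ge 0}$, the constraint $\sum_j n_j \s_j(Y) = 2$ together with the dual-basis expansion $Y = \sum_i \s_i(Y)\ttS^i$ and the pairings $\langle H_\gamma, \ttS^i\rangle$ in the simple coroot basis forces $\s_i(Y) \le 2$ for every $i$. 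An alternative route is via a Bala--Carter-type reduction to the case of a \emph{distinguished} triple (one with $\dim \fg^0 = \dim \fg^2$), for which a direct argument gives $\s_i(Y) \in \{0,2\}$; the general case then yields $\s_i(Y) \in \{0,1,2\}$.

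The main obstacle is the upper bound in the third step: this is where the actual content of Dynkin's theorem lies. The first two steps are essentially formal consequences of Jacobson--Morosov, Mal'cev, and Weyl-chamber uniqueness of dominant Cartan representatives. The upper bound, by contrast, requires a genuine analysis of the $\fsl_2$-module structure on $\fg_\bC$ — either the coroot pairing sketched above, or the reduction to distinguished triples — to rule out labels $\ge 3$.
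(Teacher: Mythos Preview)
The paper does not prove Theorem~\ref{T:Dynkin}; it is quoted as a known result with citations to Dynkin and Kostant. So there is no proof in the paper to compare against, and your proposal must be judged on its own merits.

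Your first step is fine: well-definedness and completeness of $\s(\cN)$ follow exactly as you say from Jacobson--Morosov, the conjugacy theorems for standard triples (any two triples with the same nilnegative, or the same neutral element, are conjugate), and the fact that each Weyl orbit in $\fh$ meets the closed dominant chamber in a single point.

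The second step has a genuine gap. Your observation is correct and is the heart of the matter: if $\s_i(Y)\ge 3$, then every positive root $\gamma$ with $\gamma(Y)=2$ has zero $\s_i$--coefficient, since $\sum_j n_j\s_j(Y)=2$ with all $n_j,\s_j(Y)\ge 0$. But the way you try to extract a contradiction --- writing $Y=\sum c_\gamma H_\gamma$ and pairing with $\ttS^i$ --- does not work, because you have no control over the signs of the coefficients $c_\gamma$. (One does have $\s_i(H_\gamma)\le 0$ for each such $\gamma$, since $(\s_i,\s_j)\le 0$ for $j\ne i$; but without $c_\gamma\ge 0$ nothing follows.) The correct completion uses your observation differently: from $\fsl_2$--theory, $\tad(N^+):\fg^{-k}\to\fg^{-k+2}$ is injective for $k>0$, so $[N^+,X_{-\s_i}]\ne 0$. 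On the other hand, writing $N^+=\sum_{\gamma(Y)=2} a_\gamma X_\gamma$, each bracket $[X_\gamma,X_{-\s_i}]$ lives in $\fg^{\gamma-\s_i}$, and $\gamma-\s_i$ is a root only when the $\s_i$--coefficient of $\gamma$ is $\ge 1$; your observation rules this out, forcing $[N^+,X_{-\s_i}]=0$, a contradiction. This is Dynkin's original argument. Your proposed alternative via Bala--Carter is anachronistic and circular: the Bala--Carter classification postdates and presupposes Dynkin's bound.
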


\begin{example}[Nilpotent orbits in $\fg_\bC = \fsl_n\bC$] \label{eg:SL_1}
The $\tAd(G_\bC)$--orbits in $\tNilp(\fg_\bC)$ are indexed by partitions $\mathbf{d} = [d_i]$ of $n$ \cite[Chapter 3]{\CoMc}.  Given a partition, the corresponding characteristic vector is obtained as follows.  From a part $d_i$, we construct a set $(d_i) = \{ d_i-1 \,,\, d_i-3 \,,\ldots,\, 3-d_i \,,\, 1-d_i \}$.  Take the union of these sets, re-ordering into a non-increasing sequence $\cup_i (d_i) = \{h_1 \ge \ldots \ge h_n\}$.  Then the characteristic vector of the orbit $\cN_\mathbf{d}$ indexed by $\mathbf{d}$ is 
\[
  \s(\cN_\mathbf{d}) \ = \ ( h_1-h_2 \,,\, h_2 - h_3 \,,\ldots,\, h_{n-1}-h_n ) \,.
\]
For example, in the case that $n=4$ there are five nilpotent orbits, indexed by
\begin{eqnarray*}
  \s( \cN_{[4]}) \ = \ (2,2,2) \,, & \quad &
  \s( \cN_{[3,1]}) \ = \ (2,0,2) \,,\\
  \s( \cN_{[2^2]}) \ = \ (0,2,0) \,,& \quad &
  \s( \cN_{[2,1^2]}) \ = \ (1,0,1) \,,\\
  \s( \cN_{[1^4]}) \ = \ (0,0,0) \,.  
\end{eqnarray*}
\end{example}

The index set $I$ (\S\ref{S:P}) corresponding to the conjugacy class of the Jacobson--Morosov parabolic $W_0(N,\fg)$ is 
\[
  I \ = \ \{ i \ | \ \s_i(\cN) \not=0 \}\,.
\]
Equivalently, the simple roots of the reductive Levi factor are
\[
  \sS(\fg_0) \ = \ \{ \s_j \ | \ \s_j(\cN) = 0 \} \,.
\]

\begin{example}[Jacobson--Morosov parabolics in $\fg_\bC = \fsl_4\bC$]\label{eg:JMsl4}
The group $G_\bC$ contains $2^3-1 = 7$ conjugacy classes $\cP_I$ of parabolic subgroups, indexed by nonempty $I \subset \{1,2,3\}$.  From Example \ref{eg:SL_1} we see that only three of the conjugacy classes are Jacobson--Morosov: the corresponding index sets are $I = \{2\} \,,\ \{1,3\}\,,\ \{1,2,3\}$.
\end{example}

The neutral element $Y$ is \emph{even} if the $Y$--eigenvalues are all even.  From Theorem \ref{T:Dynkin} we see that the neutral element $Y$ is even if and only if $\s_i(\cN) \in \{0,2\}$ for all $i$. Equivalently, 
\begin{equation} \label{E:even}
\begin{array}{l}\hbox{\emph{the neutral element $Y$ is even if and only if it is twice the grading element \eqref{E:ttE}}}\\
\hbox{\emph{canonically associated with a choice of Cartan and Borel $\fh \subset \fb \subset W_0(N^+,\fg_\bC)$.}}
\end{array}
\end{equation}
When $Y$ is even we say that $W_0(N^+,\fg_\bC)$ is an \emph{even Jacobson--Morosov parabolic}. 

There is a unique Zariski open orbit $\cN_\mathrm{prin} \subset \tNilp(\fg_\bC)$ of dimension $\tdim\,\fg_\bC \,-\, \trank\,\fg_\bC$; this is the \emph{principal} (or \emph{regular}) \emph{nilpotent orbit}.  The orbit is represented by $N = \xi^1 + \cdots + \x^r$ with each simple root vector $\x^i \in \fg^{\s_i}$ nonzero.  In this case the characteristic vector is
\[
  \s(\cN_\tprin) \ = \ (2,2,\ldots,2) \,.
\]
In particular,
\begin{equation} \label{E:BEP}
\hbox{\emph{the Borel $B \subset G_\bC$ is an even Jacobson--Morosov parabolic.}}
\end{equation}

\subsection{Compact roots} \label{S:cpt_rts}

Let $G_\bR$ be a real semisimple Lie algebra.  Fix a Cartan decomposition $\fg_\bR = \fk_\bR \op \fk_\bR^\perp$.  There is a classification of nilpotent orbits in $\fg_\bR$ that is analogous to that of Theorem \ref{T:Dynkin} in the sense that the orbits are enumerated by characteristic vectors that are given by the roots of $\fk_\bC$.  This classification is reviewed in \S\ref{S:rtno_R}; in anticipation of that discussion we briefly recall the relationship between the roots of $\fg_\bC$ and the roots of $\fk_\bC$.  

Fix a Cartan subalgebra $\ft \subset \fk_\bR$.  Let $\fh$ be a Cartan subalgebra of $\fg_\bC$ containing $\ft \ot_\bR \bC$.  Given a choice of simple roots $\sS = \{ \s_1,\ldots,\s_r\} \subset \fh^*$ of $\fg_\bC$, let $\tilde \a$ denote the highest root, and set
\[
  \sS_\mathrm{ext} \ := \ \{ \sS \} \,\cup\, \{-\tilde\a\} \,.
\]
For a suitable choice\footnote{This means we may need to replace $\sS$ with its image $w\sS$ under an element $w \in \sW$ of the Weyl group.} of $\sS$ there exists a subset $\sS_\fk \subset \sS_\mathrm{ext}$ such that $\left.\sS_\fk\right|_{\ft \ot_\bR \bC}$ is a set of simple roots of $\fk_\bC$.  We will assume throughout that $\trank\,\fk_\bC = \trank\,\fg_\bC$,\footnote{This is the case when $G_\bR$ may be realized as a Mumford--Tate group \cite{\GGK}.} so that $\fh = \ft \ot_\bR \bC$ is a Cartan subalgebra of both $\fk_\bC$ and $\fg_\bC$.   There are two cases to consider:
\begin{a_list}
\item
If $\fg_\bR$ is of Hermitian symmetric type, then $\fk_\bR$ is reductive with a one--dimensional center and we may take $\sS_\fk \subset \sS$.  In this case, the center of $\fk_\bC$ is spanned by the grading element dual to the simple noncompact root $\{\a'\} = \sS\backslash\sS_\fk$.
\item
If $\fg_\bR$ is not of Hermitian symmetric type, then $\fk_\bR$ is semisimple and $-\tilde\a \in \sS_\fk$.
\end{a_list}
In both cases $\sS \backslash \sS_\fk$ consists of a single simple root $\a'$, which we will refer to as \emph{the noncompact simple root}.\footnote{The root $\a'$ corresponds to the painted node in the Vogan diagram of $\fg_\bR$, \cf\cite[\S VI.8]{\Knapp}.}

\begin{example}
The algebra $\fg_\bR = \fsu(p,q)$ is of Hermitian symmetric type.  In this case we have $\a' = \s_p$ and $\sS_\fk = \sS \backslash \{ \s_p \} \subset \sS$.
\end{example}

\begin{example}
For the algebra $\fg_\bR = \fso(2p,2q+1)$, we have $\a' = \s_p$.  This real form is of Hermitian symmetric type if and only if $p = 1$.
\end{example}

\begin{example}
The algebra $\fg_\bR=\fsp(r,\bR)$ is of Hermitian symmetric type; in this case $\a'= \s_r$.  The real forms $\fsp(p,r-p)$, with $p \ge 1$, are not of Hermitian symmetric type; in this case $\a' = \s_p$.
\end{example}

\subsection{$\tAd(G_\bR)$--orbits in $\tNilp(\fg_\bR)$} \label{S:rtno_R}

This section is a terse review of the classification of the nilpotent orbits in a real semisimple Lie algebra $\fg_\bR$ by the \emph{Djokovi{\'c}--Kostant--Sekiguchi correspondence}\footnote{The correspondence was conjectured by Kostant, and proved independently by Djokovi{\'c} \cite{MR891636} and Sekiguchi \cite{MR867991}.}
\begin{equation} \label{E:DKS}
  \left\{ 
  \hbox{nilpotent $\tAd(G_\bR)$--orbits in $\fg_\bR$}
  \right\} 
  \quad \stackrel{\mathrm{bij}}{\longleftrightarrow}\quad
  \left\{ 
  \hbox{nilpotent $\tAd(K_\bC)$--orbits in $\fk^\perp_\bC$}
  \right\} \,.  
\end{equation}
For details, consult \cite[\S9]{\CoMc} and the references therein.

The correspondence is realized through refinements of the standard triples of \S\ref{S:sl2trp}.  Let $G_\bR$ be a real semisimple Lie algebra.  Fix a Cartan decomposition $\fg_\bR = \fk_\bR \op \fk_\bR^\perp$, and let $\theta$ be the associated Cartan involution.  A \emph{Cayley triple} is a standard triple $\{ N^+ , Y , N\}$ of $\fg_\bR$ with the property that 
\begin{equation} \label{E:Ctrp}
  \theta(N) \ = \ -N^+ \,,\quad \theta(N^+) \ = \ 
  -N \tand \theta(Y) \ = \ -Y \,.
\end{equation}

\begin{remark} \label{R:cayley}
Every standard triple in $\fg_\bR$ is $G_\bR$--conjugate to a Cayley triple \cite[Theorem 9.4.1]{MR1251060}.
\end{remark}

\begin{example} \label{eg:stdtri_2}
Let $\{ N^+ , Y , N\}$ be a standard triple.  Then $\tspan_\bR\{N^+ , Y , N\}$ is isomorphic to $\fsl_2\bR$.  The standard triple is a Cayley triple with respect to the Cartan decomposition $\fsl_2\bR = \fk_\bR \op \fk_\bR^\perp$ given by $\fk_\bR = \tspan_\bR\{ N^+ - N\}$ and $\fk^\perp_\bR = \tspan_\bR\{ Y , N^++N\}$.
\end{example}

A \emph{Djokovi{\'c}--Kostant--Sekiguchi triple} (DKS--triple) is any standard triple in $\fg_\bC$ of the form $\{ \overline\sE,\sZ,\sE\}$ with the property that $\sZ \in \fk_\bC$ and $\overline\sE ,\, \sE \in \fk^\perp_\bC$.  The \emph{Cayley transform} of a Cayley triple $\{ N^+ , Y , N \}$ is the DKS--triple
\begin{eqnarray} 
  \nonumber
  \overline\sE & = & \half \,( N^+ + N - \bi\,Y) \,,\\
  \label{E:DKStrp}
  \sZ  & = & \bi\,(N - N^+ ) \,,\\
  \nonumber
  \sE  & = & \half \, (N^+ + N + \bi\,Y) \,.
\end{eqnarray}
Note that 
\begin{equation}\label{E:ZvY}
  \left\{ \overline\sE , \sZ , \sE \right\} \ = \ 
  \tAd_\varrho \left\{ N^+ , Y , N \right\} \,,
\end{equation}
where the element $\varrho \in G_\bC$ is defined by
\begin{equation} \label{E:bd}
  \varrho \ = \ \texp\,\bi\tfrac{\pi}{4} \left( N^++N \right) 
  \quad 
  \Big( = \ \texp\,\bi\tfrac{\pi}{4} \left( \sE + \overline\sE \right) \,\Big)\,.
\end{equation}
The Djokovi{\'c}--Kostant--Sekiguchi correspondence \eqref{E:DKS} identifies the $\tAd(G_\bR)$--orbit of $N$ with the $\tAd(K_\bC)$--orbit of $\sE = \tAd_\varrho(N)$.  

\begin{example} \label{eg:stdtri_3}
Identify \eqref{E:stdtri_sl2} as a Cayley triple with respect to the Cartan decomposition of Example \ref{eg:stdtri_2}.  Then \eqref{E:stdtri_su11} is the Cayley transform of \eqref{E:stdtri_sl2}.
\end{example}

In summary, to distinguish the $\tAd(G_\bR)$--orbits in $\fg_\bR$ it suffices to distinguish the $\tAd(K_\bC)$--orbits in $\fk_\bC^\perp$.  Let $\sS_\fk = \{\gamma_1,\ldots,\gamma_s\} \subset \fh^*$ denote the simple roots of $\fk_\bC$ (\S\ref{S:cpt_rts}).  We may conjugate $\sZ$ by $\tAd(K_\bC)$ so that $\sZ \subset \fh$ and $\gamma_i(\sZ) \ge 0$.  The vector
\[
  \gamma(\sZ) \ := \ ( \gamma_1(\sZ) , \ldots, \gamma_s(\sZ) )
\]
it is an invariant of the nilpotent orbit so that 
\[
  \gamma(\cN) \ := \ \gamma(\sZ)
\]
is well--defined.  However, in the case that $\fg_\bR$ is of Hermitian symmetric type, it is not a complete invariant (two distinct orbits $\cN'\not=\cN$ may have $\gamma(\cN) = \gamma(\cN')$); we have lost information on the component of $\sZ$ lying in the center.  Recall the noncompact simple root $\a' \in \sS\backslash \sS_\fk$ (\S\ref{S:cpt_rts}).  The integer $\a'(\sZ)$ is also an invariant of the of nilpotent orbit, so that 
\[
  \a'(\cN) \ := \ \a'(\sZ)
\]
is also well--defined.  The pair $( \gamma(\sZ)\,;\,\a'(\sZ))$ is a complete invariant of the orbit, which we shall refer to as the  \emph{(compact) characteristic vector} of the orbit $\cN = \tAd(G_\bR)\cdot N$ (or the orbit $\tAd(K_\bC)\cdot \sE$).  (In the case that $\fg_\bR$ is not Hermitian symmetric, the simple roots $\sS_\fk$ span $\fh^*$ so that $\a'(\sZ)$ is determined by $\gamma(\cN)$.)  The following may be found in \cite[\S9.5]{\CoMc}.

\begin{theorem}\label{T:ccv}
The compact characteristic vector $( \gamma(\sZ)\,;\,\a'(\sZ))$ is a complete invariant of the orbit $\tAd(G_\bR)\cdot N \subset \tNilp(\fg_\bR)$.
\end{theorem}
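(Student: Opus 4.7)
\textbf{Proof plan for Theorem \ref{T:ccv}.} The strategy is to translate the question across the Djokovi{\'c}--Kostant--Sekiguchi bijection \eqref{E:DKS} and then prove the analog of Dynkin's Theorem \ref{T:Dynkin} for $\tAd(K_\bC)$--orbits on $\tNilp(\fk_\bC^\perp)$, with the pair $(\fk_\bC,\fk_\bC^\perp)$ playing the role of $(\fg_\bC,\fg_\bC)$. First, each $\tAd(G_\bR)$--orbit in $\tNilp(\fg_\bR)$ contains a nilnegative element $N$ belonging to a Cayley triple (Remark \ref{R:cayley}), whose Cayley transform \eqref{E:DKStrp} produces a DKS triple $\{\overline\sE,\sZ,\sE\}$; by \eqref{E:DKS} the resulting $\tAd(K_\bC)$--orbit of $\sE \in \fk_\bC^\perp$ is a complete invariant of the original real orbit. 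Hence it suffices to show that the pair $(\gamma(\sZ);\a'(\sZ))$ determines the $\tAd(K_\bC)$--orbit of $\sE$.

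Next I would argue that the $K_\bC$--conjugacy class of $\sZ$ alone determines the $K_\bC$--orbit of $\sE$. Because $(\fk_\bC,\fk_\bC^\perp)$ is a $\bZ/2\bZ$--graded Lie algebra, the Kostant--Rallis extension of the Jacobson--Morozov theorem applies: every nilpotent $\sE \in \fk_\bC^\perp$ is the nilpositive element of some DKS triple $\{\overline\sE,\sZ,\sE\}$, and any two such completions are conjugate by the stabilizer $Z_{K_\bC}(\sE)$ (see \cite[\S9.5]{\CoMc}). Consequently, once we fix $\sZ \in \fh$ in a chosen Weyl chamber for $\sS_\fk$, the element $\sE$ is a principal (equivalently, distinguished) nilpositive element inside the $2$--eigenspace $\fk_\bC^\perp \cap \fg_\bC^{2}(\sZ)$ of $\tad(\sZ)$, which forms a single orbit under the centralizer $Z_{K_\bC}(\sZ)$. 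This is precisely the graded analog of Dynkin's argument and yields: $\tAd(K_\bC)\cdot\sE = \tAd(K_\bC)\cdot\sE'$ if and only if $\tAd(K_\bC)\cdot\sZ = \tAd(K_\bC)\cdot\sZ'$.

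It remains to show that the data $(\gamma(\sZ);\a'(\sZ))$ pins down $\sZ$ up to $K_\bC$--conjugacy, which is where the assumption $\trank\,\fk_\bC = \trank\,\fg_\bC$ enters decisively. Since $\sZ$ is semisimple and lies in $\fk_\bC$, it can be conjugated into $\fh$ and then into the closed dominant chamber of $\fh$ cut out by $\gamma_i \ge 0$ using the Weyl group of $\fk_\bC$; the resulting element of $\fh$ is unique within the $K_\bC$--orbit. When $\fg_\bR$ is not of Hermitian symmetric type (\S\ref{S:cpt_rts}), the algebra $\fk_\bC$ is semisimple of full rank $r$, so $\sS_\fk$ is a basis of $\fh^*$ and $\gamma(\sZ)$ alone recovers $\sZ$. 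When $\fg_\bR$ is of Hermitian symmetric type, $|\sS_\fk| = r-1$ and $\sS_\fk$ spans the annihilator of the central line of $\fk_\bC$, so the vector $\gamma(\sZ)$ determines $\sZ$ only modulo that one--dimensional center; the additional integer $\a'(\sZ)$, where $\a' \in \sS \setminus \sS_\fk$, supplies the missing coordinate and fixes $\sZ$ completely. Combining these steps yields the theorem.

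The main obstacle is the middle step: the claim that the neutral element $\sZ$ determines the $K_\bC$--orbit of $\sE$. It is not a formal consequence of the complex Dynkin theorem because we are working in the symmetric pair $(\fk_\bC,\fk_\bC^\perp)$ rather than in $\fg_\bC$; one needs the Kostant--Rallis theorem, together with the fact that the nilpositive slot of an $\fsl_2$--triple in a graded Lie algebra is ``principal'' in the appropriate centralizer. Everything else is a careful bookkeeping of the compact versus noncompact simple roots, made possible by the equal--rank hypothesis.
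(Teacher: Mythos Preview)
The paper does not supply its own proof of Theorem \ref{T:ccv}; it simply records the result with the reference ``may be found in \cite[\S9.5]{\CoMc}.'' Your plan is essentially the argument given there: pass through the Djokovi{\'c}--Kostant--Sekiguchi correspondence, invoke the Kostant--Rallis/Mal'cev--type statement that two normal (DKS) triples with the same neutral element $\sZ$ are conjugate under $Z_{K_\bC}(\sZ)$, and then do the linear algebra over $\fh$ to recover $\sZ$ from $(\gamma(\sZ);\a'(\sZ))$ using the equal--rank hypothesis and the compact/noncompact dichotomy of \S\ref{S:cpt_rts}. So your approach matches the cited source and is correct in outline.

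One wording caution: in the middle step you call $\sE$ ``principal (equivalently, distinguished)'' in the $2$--eigenspace $\fk_\bC^\perp \cap \fg_\bC^{2}(\sZ)$. That is not quite the right formulation, and ``principal'' is not equivalent to ``distinguished'' here. The precise statement you need (and what \cite[\S9.5]{\CoMc} proves, following Kostant--Rallis) is that the set of $\sE \in \fk_\bC^\perp$ completing $\sZ$ to a DKS triple is a single $Z_{K_\bC}(\sZ)$--orbit; equivalently, the map $\sE \mapsto \sZ$ from $K_\bC$--orbits in $\tNilp(\fk_\bC^\perp)$ to $K_\bC$--orbits of semisimple elements in $\fk_\bC$ is injective. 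No ``principal'' hypothesis is needed or implied. With that adjustment your sketch is sound.
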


\section{Hodge theory background} \label{S:htb}

\subsection{Hodge representations and Mumford--Tate domains} \label{S:Hrep}

Let $G_\bR$ be a non--compact, reductive, real algebraic group with maximal compact subgroup $K_\bR$ of equal rank
\[
  \trank\,\fg_\bC \ = \ \trank\,\fk_\bC \,.
\]
A \emph{(real) Hodge representation} (of weight $n$) of $G_\bR$ is defined in \cite{\GGK} and consists of:
\begin{i_list}
\item 
a finite dimensional vector space $V_\bR$ defined over $\bR$, a nondegenerate $(-1)^n$--symmetric bilinear form $Q : V_\bR \times V_\bR \to \bR$, and a homomorphism of real algebraic groups
\[
  \rho : G_\bR \to \tAut(V_\bR,Q) \,;
\]
\item
a nonconstant homomorphism of real algebraic groups
\[
  \varphi : S^1 \to G_\bR
\]
such that $\rho \circ\varphi$ defines a $Q$--polarized (pure, real) Hodge structure of weight $n$ on $V_\bR$.  The latter condition means that 
\begin{equation} \label{E:VH}
  V^{p,q} \ = \ 
  \{ v \in V_\bC \ | \ \rho\circ\varphi(z)v = z^{p-q} v \,\ \forall \ z\in S^1 \}
\end{equation}
defines a Hodge decomposition $V_\bC = \op_{p+q=n}V^{p,q}$ and $Q( \varphi(\bi) v , \bar v) > 0$ for all $0\not= v \in V_\bC$.
\end{i_list}
We always assume that the induced representation $\td \rho : \fg_\bR \to \tEnd(V_\bR,Q)$ is faithful, and will often refer to $\varphi$ as a ``circle''.  The Hodge representation is properly denoted $(V_\bR,Q,\rho,\varphi)$, but will sometimes be indicated by $V_\bR$ alone.  Additionally, we will often suppress $\rho$, and view the circle $\varphi$ as acting directly on $V_\bC$; it is from this perspective that we will refer to $\varphi$ as the Hodge structure on $V_\bR$, and generally write $N \in \tEnd(V_\bR)$ in place of $\td\rho(N) \in \tEnd(V_\bR)$.

Associated to the Hodge representation is the \emph{Hodge flag} 
\begin{equation}\label{E:Fp}
  F^p \ = \ \bigoplus_{r\ge p} V^{r,\sb} \,.
\end{equation}
The \emph{Hodge numbers} are the dimensions $\mathbf{f} = (f^p = \tdim_\bC F^p)$.  The Hodge flag is a point in the $Q$--isotropic flag variety $\tFlag^Q_\mathbf{f}(V_\bC)$.  The $G_\bR$--orbit $D = G_\bR\cdot F^\sb$ is the \emph{Mumford--Tate domain} of the Hodge representation; it is an open subset of the \emph{compact dual} $\check D = G_\bC \cdot F^\sb$.  When $G_\bR = \tAut(V_\bR,Q)$, $D$ is a \emph{period domain}.

As homogeneous manifolds
\[
  \check D \ = \ G_\bC/P \tand 
  D \ = \ G_\bR/K_\bR^0
\]
where $P = \tStab_{G_\bC}{F^\sb}$ is a parabolic subgroup of $G_\bC$ and $K_\bR^0 = G_\bR \cap P$ is compact.  We say that \emph{the Hodge representation $(V_\bR,Q,\rho,\varphi)$ realizes the homogeneous manifold $G_\bR/K_\bR^0$ as a Mumford--Tate domain}.  Such a realization is not unique.  For example, given $(V_\bR,Q,\rho,\varphi)$, there is an induced bilinear form $Q_\fg$ on $\fg_\bR \subset \tEnd(V_\bR,Q)$ that is nondegenerate and symmetric, and $(\fg_\bR,Q_\fg,\tAd,\varphi)$ is a weight zero Hodge representation that also realizes $G_\bR/K_\bR^0$ as a Mumford--Tate domain.  (See \S\ref{S:HS-CD} for further discussion of this induced representation.)   These two realizations are isomorphic as Mumford--Tate domains.  A key consequence of this is that
\begin{equation} \label{E:useg}
\begin{array}{l}
\emph{For the purposes of studying $G_\bR/K_\bR^0$ as a Mumford--Tate domain $D$,}\\ \emph{we may work with either the Hodge representation $(V_\bR,Q,\rho,\varphi)$ or} \\ \emph{the induced Hodge representation $(\fg_\bR,Q_\fg,\tAd,\varphi)$.}
\end{array}
\end{equation}  
What we have in mind is the case that $V_\bR$ carries an effective Hodge structure of weight $n \ge0$; for example, $V_\bR = H^n(X,\bR)$, where $X$ is a smooth projective variety.  It is helpful to work with the induced, weight zero, Hodge representation on $\fg_\bR$ because the latter is closely related to the geometry and representation theory associated with the flag domain $D \subset \check D$.  

\begin{remark}[A notational liberty] \label{R:not}
The Hodge flag $F^\sb$ and the circle $\varphi$ are equivalent: given one, the second is determined, \cf~\cite{\GGK}.  So we may identify $\varphi$ with the point $F^\sb \in D$.  This will be especially convenient when we wish to down play our choice of Hodge representation $(V_\bR,\rho)$ that gives $D \simeq G_\bR/K_\bR^0$ the structure of a Mumford--Tate domain.
\end{remark}

\subsubsection{Hodge structures and Cartan decompositions} \label{S:HS-CD}

Given a Hodge representation $(V_\bR,Q,\rho,\varphi)$ the induced Hodge structure on $\fg_\bC$ is
\begin{subequations} \label{SE:indHD}
\begin{equation} 
  \fg_\bC \ = \ \bigoplus \fg^{p,-p} \,,
\end{equation}
where
\begin{equation}
\renewcommand{\arraystretch}{1.3}
\begin{array}{rcl}
  \fg^{p,-p} & = & 
  \{ \x \in \fg_\bC \ | \ \x(V^{r,s}) \subset V^{r+p,s-p}
  \ \forall \ r,s \} \\
  & = & 
  \{ \x \in \fg_\bC \ | \ \tAd_{\varphi(z)}\x = z^{2p}\xi \ \forall \ 
  z \in S^1 \} \,.
\end{array}
\end{equation}
\end{subequations}
The decomposition is a grading of the Lie algebra in the sense that 
\[
  [ \fg^{p,-p}\,,\, \fg^{q,-q}] \ \subset \ [\fg^{p+q,-p-q}] \,.
\]
This implies that 
\begin{subequations}\label{SE:kCkperpC}
\begin{equation}
  \fk_\bC \ := \ \bigoplus_{p \ \teven} \fg^{p,-p} 
\end{equation}
is a subalgebra of $\fg_\bC$, and 
\begin{equation}
  \fk_\bC^\perp \ := \ \bigoplus_{p \ \todd} \fg^{p,-p} 
\end{equation}
\end{subequations}
is a $\fk_\bC$--submodule.  Moreover, $\overline{\fg^{p,-p}} = \fg^{-p,p}$ implies that both $\fk_\bC$ and $\fk_\bC^\perp$ are defined over $\bR$, so that 
\begin{equation} \label{E:CD}
  \fg_\bR \ = \ \fk_\bR \, \op \, \fk_\bR^\perp
\end{equation}
where $\fk_\bR = \fg_\bR \cap \fk_\bC$ and $\fk_\bR^\perp = \fg_\bR \cap \fk_\bC^\perp$.  The following is well-known; see, for example, \cite{\CKextn, \GGK}.

\begin{lemma}\label{L:CD}
The Weyl operator $\varphi(\bi)$ is a Cartan involution with Cartan decomposition \eqref{E:CD}.
\end{lemma}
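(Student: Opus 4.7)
The plan is to verify, in turn, (i) that $\theta := \tAd_{\varphi(\bi)}$ is an involution of $\fg_\bR$, (ii) that its $\pm 1$ eigenspace decomposition is precisely \eqref{E:CD}, and (iii) that $\theta$ is Cartan.

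For (i) and (ii), the key observation is that $\varphi(\bi)^2 = \varphi(-1)$ acts on each $V^{p,q}$ by the scalar $\bi^{2(p-q)} = (-1)^n$, so $\rho\circ\varphi(-1)$ is a scalar on $V_\bC$ and $\tAd_{\varphi(-1)} = \tid$ on $\fg_\bC$, whence $\theta^2 = \tid$. From the definition \eqref{SE:indHD} of $\fg^{p,-p}$ we read off that $\theta$ acts on $\fg^{p,-p}$ as $\bi^{2p} = (-1)^p$, so the $+1$ and $-1$ eigenspaces of $\theta$ on $\fg_\bC$ are precisely the subspaces $\fk_\bC$ and $\fk_\bC^\perp$ of \eqref{SE:kCkperpC}. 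Since $\theta$ is defined over $\bR$, intersecting with $\fg_\bR$ yields \eqref{E:CD}.

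For (iii), I would pass via \eqref{E:useg} to the induced weight-zero Hodge representation $(\fg_\bR, Q_\fg, \tAd, \varphi)$ and exploit the polarization condition $Q_\fg(\theta\xi, \bar\xi) > 0$ for all $0 \neq \xi \in \fg_\bC$, together with the Hodge--Riemann orthogonality $Q_\fg(\fg^{p,-p}, \fg^{r,-r}) = 0$ whenever $r \neq -p$. These combine to show that the Hermitian form $\xi \mapsto Q_\fg(\theta\xi, \bar\xi)$ is positive definite on $\fg_\bC$. On each simple factor of $\fg_\bR$, the $\tAd(G_\bR)$--invariant symmetric form $Q_\fg$ must be a nonzero scalar multiple of the Killing form $B$, and comparing signs converts this positivity into positive-definiteness of the bilinear form $X \mapsto -B(X, \theta X)$ on $\fg_\bR$, i.e., the Cartan condition. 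Any abelian summand of $\fg_\bR$ (where the Killing form degenerates) must be handled by hand, again using the Hermitian form $Q_\fg(\theta\,\cdot\,, \overline{\,\cdot\,})$ directly.

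The main obstacle is the bookkeeping in the last step: matching the polarization positivity of $Q_\fg$ with the Killing-form positivity required by the Cartan characterization, including the treatment of the center. An alternative route that sidesteps this is to observe that the $\theta$--fixed subgroup of $G_\bR$ coincides with $K_\bR^0 = G_\bR \cap P$, which is a maximal compact subgroup by the equal-rank hypothesis, and to invoke the standard characterization of Cartan involutions as those whose fixed subgroup is maximally compact; this reduces the entire lemma to the bookkeeping already carried out in (i) and (ii).
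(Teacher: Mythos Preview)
Your main argument in parts (i), (ii), and (iii) is correct and follows essentially the same route as the paper: you reduce the Cartan condition to the positivity of $Q_\fg(\theta\,\cdot\,,\bar{\,\cdot\,})$ coming from the polarization, identify $Q_\fg$ with a scalar multiple of the Killing form on each simple factor, and treat the center separately.  Your explicit verification in (i) and (ii) that $\theta^2=\tid$ and that the $\pm1$--eigenspaces are $\fk_\bC,\fk_\bC^\perp$ is a useful addition the paper leaves implicit.  The phrase ``comparing signs'' in (iii) is doing real work and deserves one more sentence: you need to pin down that the scalar is \emph{negative}, e.g.\ by evaluating both $Q_\fg$ and $B$ on the compact isotropy algebra $\fg^{0,0}_\bR$ (where $Q_\fg>0$ by polarization and $B<0$ because $K_\bR^0=G_\bR\cap P$ is compact).

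Your proposed alternative route, however, contains a genuine error.  The $\theta$--fixed subalgebra is $\fk_\bR=\bigoplus_{p\ \teven}\fg^{p,-p}_\bR$, whereas the Lie algebra of $K_\bR^0=G_\bR\cap P$ is $\fg_\bR\cap\fp=\fg^{0,0}_\bR$; these coincide only in the Hermitian symmetric case (when $\fg^{p,-p}=0$ for $|p|\ge2$).  In general $K_\bR^0$ is compact but not maximal compact, so you cannot read off compactness of the $\theta$--fixed subgroup directly from the known compactness of $K_\bR^0$, and the ``equal-rank hypothesis'' does not bridge that gap.  The alternative route therefore does not work as stated; stick with your main argument.
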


\begin{remark} 
The projection $D = G_\bR/K_\bR^0 \to G_\bR/K_\bR$ may be viewed as the map taking the Hodge decomposition \eqref{SE:indHD} to the Cartan decomposition \eqref{E:CD}.
\end{remark}

\begin{proof}
In the case that $\fg_\bC$ is simple, $Q_\fg$ is necessarily a negative multiple of the Killing form.  This is because a simple complex Lie algebra admits a unique $\tAd(G_\bC)$--invariant symmetric bilinear form, the Killing form, up to scale.  So the induced polarization is necessarily a constant multiple of the Killing form.  The facts that: $Q_\fg$ is positive definite on the subalgebra $\fk_\bR$ and negative definite $\fk_\bR^\perp$ imply that \eqref{E:CD} is a Cartan decomposition of $\fg_\bR$ and $Q_\fg$ is a negative multiple of the Killing form.

More generally, as a reductive algebra $\fg_\bC$ decomposes as the direct sum $\fz \op \fg_\bC^\tss$ of its center and the semisimple factor $\fg_\bC^\tss = [\fg_\bC , \fg_\bC]$.  Note that $\fz \subset \fg^{0,0}$, so that the polarization $Q_\fg$ is positive definite on the real form $\fz \cap \fg_\bR \subset \fk_\bR$.  As above, the restriction of $Q_\fg$ to any simple factor of $\fg_\bC^\tss$ will be a negative multiple of the Killing form (the multiple may vary from one simple factor to the next) and \eqref{E:CD} is a Cartan decomposition.  
\end{proof}

\begin{remark}[A reasonable assumption on $Q_\fg$]
From the argument establishing Lemma \ref{L:CD} we see that there is no essential loss of generality in assuming that the induced polarization $Q_\fg$ on $\fg_\bR$ is minus the Killing form.
\end{remark}  

Given a maximal compact Lie subgroup $K_\bR \subset G_\bR$, let $\theta : \fg_\bR \to \fg_\bR$ be the corresponding Cartan involution.  A point $\varphi \in \check D$ is a \emph{$K$--Matsuki point} if the Lie algebra $\fp$ of the stabilizer $\tStab_{G_\bC}(\varphi)$ contains a conjugation and $\theta$--stable Cartan subalgebra $\fh$ of $\fg_\bC$.  As discussed in \cite[\S4.3]{MR2188135}, 
\begin{equation}\label{E:mat}
  \hbox{\emph{any two $K$--Matsuki points in $D$ are $K_\bR$--conjugate.}}
\end{equation}

\noindent From Lemma \ref{L:CD} we obtain

\begin{corollary} \label{C:matpt}
The circle $\varphi \in D$ is a Matsuki point with respect to the maximal compact subgroup $K_\bR$ determined by \eqref{E:CD}.
\end{corollary}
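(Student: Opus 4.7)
The plan is to exhibit a single Cartan subalgebra of $\fg_\bC$ that sits inside $\tStab_{G_\bC}(\varphi)$ and is simultaneously conjugation-stable and stable under the Cartan involution $\theta = \tAd_{\varphi(\bi)}$. Via Remark \ref{R:not} the stabilizer of $\varphi$ is the parabolic $P$ preserving the Hodge flag $F^\sb$; by the Hodge grading \eqref{SE:indHD} its Lie algebra is $\fp = \bigoplus_{p\ge 0}\fg^{p,-p}$. The key observation is that the Levi factor $\fg^{0,0}$ lies in $\fk_\bC$ by \eqref{SE:kCkperpC} and is defined over $\bR$, so any Cartan subalgebra of the real reductive algebra $\fg^{0,0}_\bR := \fg_\bR \cap \fg^{0,0}$ will automatically lie inside $\fk_\bR$.

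Concretely, I would proceed as follows. Let $X = \td\varphi(\bi) \in \fg_\bR$ denote the infinitesimal generator of the circle; by \eqref{SE:indHD} the operator $\tAd_X$ acts on $\fg^{p,-p}$ with eigenvalue $2\bi p$, and since $[X,X]=0$ one has $X \in \fg^{0,0}$, hence $X \in \fg^{0,0}_\bR \subset \fk_\bR$. As $X$ is a semisimple element of the compact Lie algebra $\fk_\bR$, I choose a maximal torus $\ft \subset \fk_\bR$ containing $X$. Every element of $\ft$ commutes with $X$ and therefore lies in $\ker \tAd_X \cap \fk_\bR = \fg^{0,0}_\bR$, which yields $\ft \subset \fg^{0,0}_\bR \subset \fp$.

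By the standing equal-rank hypothesis $\trank\,\fk_\bC = \trank\,\fg_\bC$, the complexification $\fh := \ft \otimes_\bR \bC$ is a Cartan subalgebra of $\fg_\bC$. It lies in $\fp$ by the previous paragraph; it is conjugation-stable as the complexification of a real subalgebra; and since $\ft \subset \fk_\bR$ is pointwise fixed by the Cartan involution $\theta$ (Lemma \ref{L:CD}), the complexification $\fh$ is $\theta$-stable. This shows that $\varphi$ is a $K$-Matsuki point.

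The only step requiring any real thought is the claim that a maximal torus of $\fk_\bR$ containing $X$ genuinely upgrades to a Cartan subalgebra of $\fg_\bC$ upon complexification; this is precisely where the equal-rank assumption imposed in \S\ref{S:Hrep} is consumed. Everything else is bookkeeping with the Hodge bigrading \eqref{SE:indHD}.
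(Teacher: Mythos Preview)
Your proof is correct and is precisely the argument the paper leaves implicit: the paper derives the corollary directly from Lemma~\ref{L:CD} without further comment, and what you have written is the natural unpacking of that step, producing the conjugation- and $\theta$-stable Cartan $\fh = \ft \otimes_\bR \bC \subset \fg^{0,0} \subset \fp$ from a maximal torus $\ft \subset \fk_\bR$ containing the infinitesimal generator of $\varphi$. Your explicit identification of where the equal-rank hypothesis enters is a useful addition.
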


\subsubsection{Hodge structures and grading elements} \label{S:HS-GE}

As illustrated in \cite[\S2.3]{MR3217458}, grading elements (\S\ref{S:GE}) are essentially infinitesimal Hodge structures.  Briefly, given a circle $\varphi : S^1 \to G_\bR$, we may assume that the image $\tim\,\varphi$ is contained in a compact maximal torus $T \subset G_\bR$ and that the complexification $\fh = \ft \ot_\bR \bC$ of the Lie algebra $\ft$ of $T$ is a Cartan subalgebra of $\fg_\bC$.  Then the (rescaled) derivative 
\begin{equation} \label{E:Ephi}
  \ttE_\varphi \ := \ \frac{1}{4\pi\bi} \varphi'(1)
\end{equation}
is a grading element.  The relationship between the $\ttE_\varphi$--eigenspace decomposition \eqref{E:VE} and the Hodge decomposition \eqref{E:VH} is 
\[
  V^{(p-q)/2} \ = \ V^{p,q}\,.
\]
In the case that $V_\bC = \fg_\bC$, we have 
\begin{equation} \label{E:p-pp}
  \fg^p \ = \ \fg^{p,-p} \,.
\end{equation}
As a consequence, the Lie algebra $\fp_\varphi$ of the stabilizer $P_\varphi = \tStab_{G_\bC}(\varphi)$ is the parabolic \eqref{E:p} associated with the grading element $\ttE_\varphi$.  

Observe that the holomorphic tangent space $T_\varphi D = \fg_\bC/\fp_\varphi$ is naturally identified with $\op_{p>0} \fg^{-p,p}$.  The \emph{horizontal sub-bundle} $T^hD \subset TD$ is the $G_\bR$--homogeneous sub-bundle with fibre $T^h_\varphi D\simeq \fg^{-1,1}$.  A holomorphic map $f : M \to D$ is \emph{horizontal} if $f_*TM \subset T^hD$.  

The horizontal sub-bundle is bracket--generating if and only if $\ttE_\varphi$ is the grading element $\ttE_{\fp_\varphi}$ associated with $\fp_\varphi$ by \eqref{E:ttE}.  One may always reduce to the case that the infinitesimal period relation is bracket--generating, \cf\cite[\S3.3]{MR3217458}, and so we will 
\begin{equation} \label{E:bg}
\begin{array}{l}
\hbox{\emph{Assume that the horizontal sub-bundle is}}\\
\hbox{\emph{bracket--generating; equivalently, $\ttE_\varphi = \ttE_{\fp_\varphi}$.}}
\end{array}
\end{equation}
This assumption has the very significant consequence that 
\begin{equation} \label{E:G_R}
\hbox{\emph{The compact dual $\check D = G_\bC/P$ determines the real form $G_\bR$.}}
\end{equation}
This may be seen as follows.  The choice of compact dual is equivalent to a choice of conjugacy class $\cP$ of parabolic subgroups $P \subset G_\bC$.  Modulo the action of $G_\bC$, the conjugacy class determines the grading element $\ttE$ by \eqref{E:ttE}.  It then follows from \eqref{SE:kCkperpC} and \eqref{E:p-pp} that the $\ttE$--eigenspace decomposition \eqref{SE:grading} of $\fg_\bC$ determines the complexified Cartan decomposition $\fg_\bC = \fk_\bC \op \fk^\perp_\bC$.  If $\fg_\bC$ is simple, then $\fk_\bC$ uniquely determines $\fg_\bR$, \cf~\S\ref{S:excp_gR}.  More generally, if $\fg_\bC$ is semisimple then each simple ideal $\fg'_\bC \subset \fg_\bC$ is a sub-Hodge structure; again the grading element/infinitesimal Hodge structure determines a complexified Cartan decomposition, and the corresponding $\fk'_\bC$ determines $\fg_\bR'$.  Finally, in the general case that $\fg_\bC = \fz_\bC \op \fg_\bC^\tss$ is reductive, the fact that the center $\fz_\bC$ is contained in $\fg^{0,0} \subset \fk_\bC$ forces $Z_\bR$ to be a compact torus $S^1 \times \cdots \times S^1$.

\subsubsection{Levi subalgebras and sub--Hodge structures} \label{S:subHS}

A \emph{(real) sub--Hodge structure} of a Hodge representation $(V_\bR, Q, \rho, \varphi)$ is given by a real subspace $U_\bR \subset V_\bR$ that is preserved under the action of $\varphi(z)$ for all $z \in S^1$.  In this case, we will say that the subspace $U_\bR$ is \emph{$\varphi$--stable}.  The following lemma formalizes an observation made in the proof of \cite[Lemma V.23]{GGR}.

\begin{lemma} \label{L:levi1}
Consider a Hodge representation $(\fg_\bR,Q_\fg,\tAd,\varphi)$ of $G_\bR$ on the Lie algebra.  A Levi subalgebra $\fl_\bR \subset \fg_\bR$ carries a sub--Hodge structure if and only if the image $\varphi(S^1)$ lies in the (connected) Lie subgroup $L_\bR \subset G_\bR$ with Lie algebra $\fl_\bR$; equivalently, $\ttE_\varphi \in \fl_\bC$.
\end{lemma}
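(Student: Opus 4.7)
The plan is to separate the biconditional into two easier pieces: first, reducing the inclusion $\varphi(S^1)\subset L_\bR$ to the infinitesimal condition $\ttE_\varphi\in\fl_\bC$, and then establishing the equivalence of this infinitesimal condition with the $\varphi$-stability of $\fl_\bR$. The only nontrivial input is the classical fact that Levi subalgebras are self-normalizing in $\fg_\bC$.

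For the first piece, since $L_\bR$ is by hypothesis the connected Lie subgroup with Lie algebra $\fl_\bR$, and $\varphi(S^1)$ is the connected one-parameter subgroup of $G_\bR$ with tangent vector $\varphi'(1) = 4\pi\bi\,\ttE_\varphi$ (see \eqref{E:Ephi}), the inclusion $\varphi(S^1)\subset L_\bR$ holds if and only if $\varphi'(1)\in\fl_\bR$, i.e., if and only if $\ttE_\varphi\in\fl_\bC$ (using $\fl_\bR=\fl_\bC\cap\fg_\bR$). This justifies the ``equivalently'' clause.

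For the main equivalence, one direction is transparent: $\varphi(S^1)\subset L_\bR$ implies $\tAd_{\varphi(z)}$ preserves $\fl_\bR$ for all $z\in S^1$, so $\fl_\bR$ carries a sub--Hodge structure. For the converse, assume $\fl_\bR$ is $\varphi$-stable. Differentiating $\tAd_{\varphi(z)}(\fl_\bR)\subset\fl_\bR$ at $z=1$ gives $[\ttE_\varphi,\fl_\bC]\subset\fl_\bC$, so $\ttE_\varphi$ normalizes $\fl_\bC$. The key (and essentially only nontrivial) step will be to invoke the self-normalizing property of Levi subalgebras: writing $\fl_\bC=\fg^0$ for a grading element $\ttE'\in\fl_\bC$ as in \S\ref{S:GE}, the $\ttE'$-eigenspace decomposition $\fg_\bC=\bigoplus_\ell\fg^\ell$ lets one expand any normalizer of $\fl_\bC$ as $\xi=\sum_\ell\xi_\ell$; since $\ttE'\in\fl_\bC$, the hypothesis forces $[\ttE',\xi]=\sum_\ell \ell\,\xi_\ell\in\fl_\bC=\fg^0$, whence $\xi_\ell=0$ for $\ell\neq0$ and $\xi\in\fl_\bC$. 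Applied to $\xi=\ttE_\varphi$, this yields $\ttE_\varphi\in\fl_\bC$.

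I do not expect a genuine obstacle: the self-normalizing observation is a one-line eigenspace argument, and the rest is the routine correspondence between connected Lie subgroups and their Lie algebras. If anything warrants care, it is keeping track of the real/complex bookkeeping, i.e., noting that although $\ttE_\varphi\in\fg_\bC$ is complex, the combination $4\pi\bi\,\ttE_\varphi=\varphi'(1)$ is real and this is what allows one to pass freely between $\ttE_\varphi\in\fl_\bC$ and $\varphi'(1)\in\fl_\bR$.
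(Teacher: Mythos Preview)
Your proof is correct and takes a genuinely different route from the paper's.  For the nontrivial direction $(\Longrightarrow)$, the paper first observes that the $\varphi$-stability induces an $\ttE_\varphi$-grading on $\fl_\bC^\tss$, then invokes an external reference \cite[Proposition 3.1.2]{\CS} to produce a grading element $\ttF \in \fl_\bC^\tss$ realizing this grading, and finally argues that $\ttE_\varphi - \ttF$ centralizes $\fl_\bC$ and hence lies in the center $\fz_\bC \subset \fl_\bC$ (using that for a Levi subalgebra the centralizer equals the center).  You instead differentiate to get $[\ttE_\varphi,\fl_\bC]\subset\fl_\bC$ and then invoke the self-normalizing property of $\fl_\bC$, which you prove in one line from the defining eigenspace decomposition $\fl_\bC=\fg^0$.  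Your argument is more elementary and self-contained: it bypasses both the auxiliary element $\ttF$ and the citation to \Cap--\Slovak.  The paper's approach, on the other hand, makes explicit the decomposition $\ttE_\varphi = \ttF + (\ttE_\varphi-\ttF)$ into semisimple and central pieces, which is conceptually in line with the projection $\pi^\tss_\fl(\ttE_\varphi)$ used later in Theorem~\ref{T:cPMHS} and Remark~\ref{R:Z}; but for the lemma itself this extra structure is not needed.
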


\begin{remark} \label{R:levi1}
A priori the condition that $\varphi(S^1) \subset L_\bR$ is stronger than the condition that $\fl_\bR$ carries a sub--Hodge structure: the former implies that $(\fg_\bR,Q_\fg,\left.\rho\right|_{L_\bR},\varphi)$ is a Hodge--representation of $L_\bR$.
\end{remark}

\begin{proof}
$(\Longleftarrow)$ If the image of $\varphi$ lies in $L_\bR$, then it is clear that $\varphi(z)$ preserves $\fl_\bR$ for all $z \in S^1$.  \smallskip

$(\Longrightarrow)$  Recall the (rescaled) derivative $\ttE_\varphi = \varphi'(1)/4\pi\bi$ of \eqref{E:Ephi}.  To show that the image of $\varphi$ lies in $L_\bR$, it suffices to show that $\ttE_\varphi \in \fl_\bC$.  Let $\fg_\bC = \op \fg^{p,q}$ be the Hodge decomposition.  Then $\fl_\bC = \op \fl^{p,q}$, where $\fl^{p,q} = \fl_\bC \cap \fg^{p,q}$.  As discussed in \S\ref{S:HS-GE}, these Hodge decompositions may be viewed as $\ttE_\varphi$--eigenspace decompositions for the grading element $\ttE_\varphi \in \fg_\bC$.  In particular, 
\begin{equation} \label{E:sl}
  \fl_\bC \ = \ \op\,\fl^a
\end{equation}
where $\fl^a = \fl_\bC \cap \fg^a$, and $\fg_\bC = \op\,\fg^a$ is given by \eqref{SE:grading}.  Moreover, \eqref{E:gr} implies that \eqref{E:sl} is a graded decomposition; that is $[\fl^a , \fl^b] \subset \fl^{a+b}$.

As a reductive Lie algebra $\fl_\bC = \fz_\bC \op \fl^\tss_\bC$, where $\fl^\tss_\bC = [\fl_\bC,\fl_\bC]$ is the semisimple factor, and $\fz_\bC \subset \fl^0$ is the center of $\fl_\bC$.  The graded decomposition of $\fl_\bC$ induces a graded decomposition 
\begin{equation}\label{E:s_ss}
  \fl^\tss_\bC \ = \ \op\,\fl_a^\tss
\end{equation}
by $\fl_a^\tss = \fl^\tss_\bC \cap \fl^a$.  There exists a grading element $\ttF \in \fl^\tss_\bC$ with the property that \eqref{E:s_ss} is the $\ttF$--eigenspace decomposition of $\fl^\tss_\bC$ \cite[Proposition 3.1.2]{\CS}.  Observe that $\ttE_\varphi - \ttF \in C_{\fg_\bC}(\fl_\bC)$ lies in the centralizer of $\fl_\bC$.  Because $\fl_\bC$ is a Levi subalgebra, this centralizer is equal to the center $\fz_\bC$.  Therefore, $\ttE_\varphi - \ttF \in \fl_\bC$.  Since $\ttF \in \fl_\bC$, this implies $\ttE_\varphi \in \fl_\bC$.
\end{proof}



\subsection{Polarized mixed Hodge structures} \label{S:NC}

Let $(V_\bR,Q)$ be a Hodge representation of $G_\bR$ and let $D \subset \check D$ be the corresponding Mumford--Tate domain.  A ($m$--variable) \emph{nilpotent orbit} on $D$ consists of a pair $(F^\sb; N_1,\ldots,N_m )$ such that $F^\sb \in \check D$, the $N_i \in \fg_\bR$ commute and $N_iF^p \subset F^{p-1}$, and the holomorphic map $\psi : \bC^m \to \check D$ defined by
\begin{equation}\label{E:htno}
  \psi(z^1,\ldots,z^m) \ = \ \texp( z^i N_i ) F^\sb 
\end{equation}
has the property that $\psi(z) \in D$ for $\tIm(z^i) \gg 0$.  The associated (open) \emph{nilpotent cone} is
\begin{equation} \label{E:s}
  \tNC\ = \ \{ t^i N_i \ | \ t^i > 0 \} \,.
\end{equation}

A \emph{polarized mixed Hodge structure} on $D$ is given by a pair $(F^\sb,N)$ such that $F^\sb \in \check D$, $N \in \fg_\bR$ and $N(F^p) \subset F^{p-1}$, $(F^\sb,W_\sb(N,V_\bR))$ is a mixed Hodge structure, and the Hodge structure on 
\[
  \tGr_k(W_\sb(N,V_\bR))_\tprim \ := \
  \tker\{ N^k : \tGr_k(W_\sb(N,V_\bR)) \to \tGr_{-k}(W_\sb(N,V_\bR))\}
\] 
is polarized by $Q(\cdot , N^k \cdot)$, for all $k\ge0$.  The notions of nilpotent orbit and polarized mixed Hodge structure are closely related.  The following well-known results are due to Cattani, Kaplan and Schmid \cite{\CKpmhs, MR1042802, \CKSdeg, \CKScoh, \Schmid}.

\begin{theorem}[Cattani, Kaplan, Schmid] \label{T:cks}
Let $D \subset \check D$ be a Mumford--Tate domain (and compact dual) for a Hodge representation $V_\bR$ of $G_\bR$.
\begin{a_list_emph}
\item
A pair $(F^\sb;N)$ forms a one--variable nilpotent orbit if and only if it forms a polarized mixed Hodge structure.  
\item
The weight filtration $W_\sb(N,V_\bR)$ does not depend on the choice of $N \in \tNC$. \emph{Let $W_\sb(\tNC,V_\bR)$ denote this common weight filtration.} 
\item
Fix $F^\sb\in\check D$ and commuting nilpotent elements $\{ N_1 , \ldots , N_m\}\subset \fg_\bR$ with the properties that: \emph{(i)} $N_i F^p \subset F^{p-1}$ for every $i$; and \emph{(ii)} the filtration $W_\sb(N,V_\bR)$ does not depend on the choice of $N \in \tNC$, where the latter is given by \eqref{E:s}.  Then $(F^\sb;N)$ is a polarized mixed Hodge structure for \emph{some} $N \in \tNC$, if and only if $(F^\sb;N_1,\ldots,N_m)$ is an $m$--variable nilpotent orbit.
\end{a_list_emph}
\end{theorem}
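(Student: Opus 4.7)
The plan is to observe that Theorem \ref{T:cks} is a direct citation of the foundational results of Schmid and Cattani--Kaplan--Schmid, so rather than reproving it from scratch I would organize the proof as a compilation: parts (a) and (b) are the main inputs, and part (c) follows by a bootstrap argument. The central conceptual pillar is the equivalence between one--variable nilpotent orbits and polarized mixed Hodge structures (PMHS), from which the multivariable theory follows via the constancy of the weight filtration on the cone.

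For the forward direction of (a), namely a nilpotent orbit $(F^\sb;N)$ yields a PMHS, I would invoke Schmid's Nilpotent Orbit Theorem: the orbit $\psi(z) = \exp(zN)F^\sb$ is asymptotic to the nilpotent orbit of an underlying PMHS with weight filtration $W_\sb(N,V_\bR)$. The key point is Schmid's observation that for $\tIm(z) \gg 0$ the flag $\psi(z)$ lies in $D$, which via the horizontality $NF^p \subset F^{p-1}$ forces $(F^\sb,W_\sb(N,V_\bR))$ to be a mixed Hodge structure with the primitive decomposition polarized by $Q(\cdot,N^k\cdot)$. The converse direction, PMHS implies nilpotent orbit, is the deeper implication: here I would appeal to the $\tSL(2)$--Orbit Theorem to attach to the PMHS an approximating $\tSL(2)$--orbit, use the explicit description of the latter to verify that $\exp(zN)F^\sb \in D$ for $\tIm(z) \gg 0$, and then transfer this openness to the original PMHS via the density of $\bR$--split PMHS and continuity arguments as in \cite{\CKpmhs, \CKSdeg}.

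For (b), the constancy of $W_\sb(N,V_\bR)$ on the cone $\tNC$, I would argue as follows. By \eqref{E:JMf} the weight filtration is determined by the Jordan structure of the $\tAd(G_\bR)$--orbit of $N$, and in particular by the neutral element $Y$. For commuting nilpotents $N_i$ defining $\tNC$, one constructs a common standard triple compatible with the cone by an inductive application of part (a) together with the Deligne splitting of the PMHS associated to any $N \in \tNC$. The weight filtration is upper semicontinuous in $N$ and integer--valued, hence locally constant on the interior of $\tNC$; combining with the connectedness of $\tNC$ and the PMHS structure provided by (a) one identifies a single Jacobson--Morosov filtration underlying the entire cone.

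For (c), the plan is a bootstrap: given the commuting nilpotents $N_i$ with a common weight filtration and $N_iF^p \subset F^{p-1}$, the hypothesis that some $N \in \tNC$ yields a PMHS together with (a) and (b) implies that every $N \in \tNC$ yields the same PMHS $(F^\sb,W_\sb(\tNC,V_\bR))$. Commutativity of the $N_i$ ensures that $\exp(z^i N_i)F^\sb = \exp(z^iN_i) \exp(wN_j)F^\sb / \exp(wN_j)$ is consistent under the multivariable exponential, and openness of $D$ in $\check D$ propagates from the one--variable openness at each ray in $\tNC$ to the product region $\tIm(z^i) \gg 0$. The converse is immediate by restriction to a ray. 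The main obstacle in the whole program is the PMHS $\Rightarrow$ nilpotent orbit direction of (a): this hinges on the $\tSL(2)$--orbit theorem, and all the delicate analytic estimates on distances in $D$ required to verify $\exp(zN)F^\sb \in D$ are concentrated there.
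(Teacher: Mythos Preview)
Your proposal is correct in spirit and, in fact, goes further than the paper does. The paper gives no proof of Theorem~\ref{T:cks} at all: it is stated as a well-known result and attributed directly to the literature via the citations \cite{\CKpmhs, MR1042802, \CKSdeg, \CKScoh, \Schmid}, with no argument or sketch provided. Your opening observation---that the theorem is a direct citation of the foundational results of Schmid and Cattani--Kaplan--Schmid---is precisely the paper's position, and that alone would suffice here.

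The additional sketch you provide (Schmid's direction for nilpotent orbit $\Rightarrow$ PMHS, the $\tSL(2)$--orbit theorem for the converse, Cattani--Kaplan for the constancy of the weight filtration on the cone, and the bootstrap for the multivariable case) is a reasonable and accurate outline of how the cited papers assemble into the statement, and is more than the paper itself offers. One small caution: your argument for (b) via upper semicontinuity of the weight filtration is not quite how Cattani--Kaplan actually establish it---their proof in \cite{MR664326} proceeds through relative weight filtrations and an inductive argument on the faces of the cone rather than a semicontinuity/connectedness argument---but since the paper does not rehearse any of this, the discrepancy is moot for the comparison at hand.
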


\noindent In a mild abuse of nomenclature, given a nilpotent orbit $(F^\sb;N_1,\ldots,N_m)$ we will sometimes refer to $(F^\sb,W_\sb(\tNC,V_\bR))$ as a polarized mixed Hodge structure (especially when we wish to emphasize the weight filtration $W_\sb(\tNC,V_\bR)$ over the nilpotents $N\in\tNC$).

The \emph{Deligne splitting} \cite{\CKSdeg, \DeligneII} 
\begin{subequations} \label{SE:deligne}
\begin{equation}
  V_\bC \ = \ \bigoplus I^{p,q}
\end{equation}
of a mixed Hodge structure $(F^\sb,W_\sb)$ on $V_\bR$ is given by
\begin{equation}
  I^{p,q} \ := \ F^p \,\cap\, W_{p+q} \, \cap \, 
  \Big( \overline{F^q} \,\cap\,W_{p+q} \,+\, 
         \sum_{j\ge1} \overline{F^{q-j}} \,\cap\, W_{p+q-j-1} \Big) \,.
\end{equation}
\end{subequations}
It is the unique bigrading of $V_\bC$ with the properties that 
\begin{equation} \label{E:FW}
  F^p \ = \ \bigoplus_{r \ge p} I^{r,\sb} \tand
  W_\ell \ = \ \bigoplus_{p+q \le \ell} I^{p,q} \,,
\end{equation}
and
\[
  \overline{I^{p,q}} \ = \ I^{q,p} \quad\hbox{mod} \quad
  \bigoplus_{r<q,s<p} I^{r,s} \,.
\]

Any mixed Hodge structure $(F^\sb , W_\sb)$ on $V$ induces a mixed Hodge structure $(F^\sb_\fg , W_\sb^\fg)$ on $\fg$ by 
\begin{eqnarray*}
  F^p_\fg & = & \{ \xi \in \fg_\bC \ | \ \xi(F^r) \subset F^{p+r} \ \forall \ r \} \\
  W^\fg_\ell & = & \{ \xi \in \fg_\bR \ | \ \xi(W_m) \subset W_{m+\ell} \ \forall \ m \} \,.
\end{eqnarray*}
The elements of $F^r_\fg \cap W^\fg_{2r}\cap\fg_\bR$ are the \emph{$(r,r)$--morphisms} of the mixed Hodge structure $(F^\sb , W_\sb)$.  Alternatively, if $\fg_\bC = \op I^{p,q}_\fg$ denotes the corresponding Deligne splitting
\[
  I^{p,q}_\fg \ = \ \{ \xi \in \fg_\bC \ | \ \x(I^{r,s}) \subset I^{p+r,q+s} \ \forall \ r,s \} \,,
\]
then the elements of $I^{r,r}_\fg\cap \fg_\bR$ are the $(r,r)$--morphisms.  

When $\overline{I^{p,q}} = I^{q,p}$ we say the mixed Hodge structure is \emph{$\bR$--split}.  When an $\bR$--split mixed Hodge structure $(F^\sb,W_\sb(\tNC,V_\bR))$ arises from a nilpotent orbit $(F^\sb;N_1,\ldots,N_m)$, we will say that the nilpotent orbit is $\bR$--split.  

\begin{remark} \label{R:Rsplit}
If $(F^\sb,N)$ is $\bR$--split, then so is the induced $(F^\sb_\fg,N)$.
\end{remark}

\noindent Observe that
\[
  L^{-1,-1}_\fg \ := \ \bigoplus_{p,q>0} I^{-p,-q}_\fg
\]
is a subalgebra of $\fg_\bC$ and is defined over $\bR$.    The following well-known results are due to Cattani, Deligne, Kaplan and Schmid \cite{\CKpmhs, \CKSdeg, \DeligneII}.

\begin{theorem}[Deligne, Cattani, Kaplan, Schmid] \label{T:dcks}
Let $D\subset \check D$ be a Mumford--Tate domain (and compact dual) for a weight $n$ Hodge representation of $G_\bR$ on $V_\bR$.
\begin{a_list_emph}
\item 
If $(F^\sb;N)$ is an $\bR$--split polarized mixed Hodge structure, then $\psi(z) = e^{zN}F^\sb \in D$ for all $\tIm(z) > 0$ and $\psi$ is a horizontal, $\tSL_2\bR$--equivariant embedding of the upper--half plane.
\item
Given a mixed Hodge structure $(F^\sb,W_\sb)$ on $V_\bR$, there exists a unique $\d \in L^{-1,-1}_{\fg,\bR}$ such that 
\[
  e^{-2\bi\d}\cdot F^p \ = \ \bigoplus_{s\ge p} I^{\sb,s} \,.
\]
The element $\d$ is real, commutes with all morphisms of $(F^\sb,W_\sb)$ and, given
\begin{equation}\label{E:tildeF}
  \tilde F^\sb \ := \ e^{-\bi\d} \cdot F^\sb, 
\end{equation}
$(\tilde F^\sb,W_\sb)$ is an $\bR$--split mixed Hodge structure.  \emph{(From $L^{-1,-1}_\fg \subset W_{-2}^\fg$ we see that $\d$ preserves the filtration $W_\sb$ and acts trivially on $\tGr_\ell(W_\sb)$.  It follows that both $F^\sb$ and $\tilde F^\sb$ determine the same filtrations on $\tGr_\ell(W_\sb)$.)}  Moreover, every morphism of $(F^\sb,W_\sb)$ commutes with $\d$, so that the morphisms of $(F^\sb,W_\sb)$ are precisely those of $(\tilde F^\sb,W_\sb)$ that commute with $\d$.
\item
In the case that $W_\sb = W_\sb(N,V_\bR)[-n]$, the two nilpotent orbits $\psi(z) = e^{zN} F^\sb$ and $\tilde\psi(z) = e^{zN}\tilde F^\sb$ agree to first order at $z=\infty$, and that limit flag is
\begin{equation} \label{E:Finfty}
  F^p_\infty \ := \ \lim_{\tIm(z) \to \infty} e^{z N} F^p \ = \ 
  \bigoplus_{s\le  n-p} I^{\sb,s} \,.
\end{equation}
\end{a_list_emph}
\end{theorem}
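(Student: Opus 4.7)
The three parts build on one another, and the overall strategy is to use the Deligne bigrading as a common coordinate system.

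For part (a), I would first show that an $\bR$--split PMHS $(F^\sb;N)$ produces a real $\fsl_2$--triple $\{N^+,Y,N\}\subset\fg_\bR$, with $Y$ the grading element acting as $(p+q-n)$ on $I^{p,q}$ and $N^+$ the nilpositive determined by Jacobson--Morosov applied to the primitive decomposition of $\bigoplus \tGr_\ell W$; reality is guaranteed by $\overline{I^{p,q}}=I^{q,p}$. The associated homomorphism $\tSL_2\bR\to G_\bR$ acts on $D$, and one checks that $\psi(z)=\exp(zN)F^\sb$ is the orbit of $F^\sb$ under the identification $\bH\simeq\tSL_2\bR/\tSO(2)$. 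Horizontality reduces to $NF^p\subset F^{p-1}$, which is part of the PMHS hypothesis. That $\psi(z)\in D$ for all $\tIm(z)>0$ then follows by combining Theorem~\ref{T:cks}(a) (which puts $\psi(z)\in D$ for $\tIm(z)\gg 0$) with $\tSL_2\bR$--equivariance, since every point of $\bH$ is an $\tSL_2\bR$--translate of one with large imaginary part.

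For part (b), I would construct $\d$ via a bihomogeneous iterative ansatz $\d=\sum_{p,q>0}\d^{-p,-q}$ with $\d^{-p,-q}\in I^{-p,-q}_\fg$. Imposing that $(e^{-\bi\d}F^\sb,W_\sb)$ be $\bR$--split expands, using \eqref{SE:deligne}, into a triangular system that solves for $\d^{-p,-q}$ in terms of lower--weight data, with reality forced at each step by $\overline{I^{p,q}}\equiv I^{q,p}\pmod{W_{p+q-2}}$. Uniqueness follows from the same triangular structure: a difference of two valid correctors would satisfy a homogeneous version of the system in $L^{-1,-1}_\fg$ and vanishes layer by layer. Commutativity with morphisms is then automatic, since any morphism $\mu$ of $(F^\sb,W_\sb)$ preserves the Deligne bigrading and so is compatible with the recursive defining equations of the $\d^{-p,-q}$.

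For part (c), I would use the $\bR$--split structure of $\tilde F^\sb$: for $v\in I^{r,s}\subset\tilde F^p$ (with $r\ge p$), expand $e^{zN}v$ using the $N$--primitive decomposition; the leading coefficient in $z$ lies in a bigraded piece with second index $\le n-p$, and passing to the limit in $\check D$ produces \eqref{E:Finfty}. For first--order agreement, write $\psi(z)=\exp(\bi\,\tAd_{\exp(zN)}\d)\,\tilde\psi(z)$; since $\d\in L^{-1,-1}_\fg\subset W^\fg_{-2}$, the conjugated element $\tAd_{\exp(zN)}\d$ remains in $W^\fg_{-2}$ and its action on $\tilde F^\sb$ vanishes to first order in $1/z$ about $z=\infty$. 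The main obstacle is the bookkeeping for the iterative construction of $\d$ in part (b); this is the technical heart of the theorem, carried out by explicit closed--form formulas in \cite{\CKSdeg,\DeligneII} which I would follow to simultaneously ensure reality, uniqueness, and morphism--compatibility.
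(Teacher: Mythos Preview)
The paper does not give its own proof of this theorem: it is stated as a known result and attributed directly to the cited literature (\cite{\CKpmhs, \CKSdeg, \DeligneII}), with no argument supplied beyond the parenthetical remark in part (b). So there is no paper proof to compare against; your sketch is essentially an outline of the standard arguments in those references, and in that sense is appropriate.

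That said, a couple of points in your plan are imprecise. In part (a), you should not need to invoke Theorem~\ref{T:cks}(a) to get $\psi(z)\in D$ for $\tIm(z)\gg 0$ and then extend by equivariance; once you have the real $\fsl_2$--triple and know the primitive pieces are polarized (which is the PMHS hypothesis), one checks directly that $\psi(\bi)=e^{\bi N}F^\sb$ is a polarized Hodge structure, and equivariance does the rest. In part (c), your first--order argument is slightly off: $\d$ commutes with $N$ (it is a morphism of the MHS), so $\tAd_{\exp(zN)}\d=\d$ and the relation is simply $\psi(z)=e^{\bi\d}\tilde\psi(z)$; the agreement to first order at $z=\infty$ then comes from $\d\in L^{-1,-1}_\fg$ acting trivially on the limit flag, not from any decay in $z$.
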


\subsection{Reduced limit period mapping} \label{S:rlpm}

Given commuting $N_1,\ldots,N_m \in \tNilp(\fg_\bR)$ defining a cone \eqref{E:s}, the \emph{boundary component} $B(\tNC)$ is the set of nilpotent orbits $(F^\sb;N_1,\ldots,N_m)$ modulo reparametrization.  That is, we say two elements $F^\sb_1$ and $F^\sb_2$ of 
\[
  \tilde B(\tNC) \ := \ \{ F^\sb \in \check D \ | \ (F^\sb;N_1,\ldots,N_m) 
  \hbox{ is a nilpotent orbit} \}
\]
are \emph{equivalent} if $F^\sb_1 = \exp(z^iN_i) F^\sb_2$ for some $z=(z^i) \in \bC^m$; then
\[
  B(\tNC) \ := \ \tilde B(\tNC)/\sim \,.
\]
In the case that $m=1$, we write $B(\tNC) = B(N)$ and $\tilde B(\tNC) = \tilde B(N)$.

The \emph{reduced limit period mapping} $\Phi_\infty : \tilde B(N) \to \tcl(D)$ defined by 
\begin{equation} \label{E:Phi_infty}
  \Phi_\infty(F^\sb,N) \ := \ \lim_{\tIm(z)\to\infty} e^{zN} \cdot F^\sb
\end{equation}
descends to a well--defined map on $B(N)$; see \cite[Appendix to Lecture 10]{\GGKtcu} and \cite[\S5]{KP2013} for details.\footnote{In \cite{KP2013}, $\Phi_\infty$ is called the \emph{\naive~ limit map}.}  More generally, as observed in \cite[Remark 5.6]{KP2013}, the reduced limit period mapping is well--defined on $B(\tNC)$; that is, \eqref{E:Phi_infty} does not depend on our choice of $N \in \tNC$.  This may be seen as follows.  First, by Theorem \ref{T:cks}(b), the weight filtration $W_\sb(\tNC,V_\bR)$ does not depend on our choice of $N \in \tNC$.  Let $(\tilde F^\sb,W_\sb(\tNC,V_\bR))$ be the $\bR$--split mixed Hodge structure given by Theorem \ref{T:dcks}(b), and let $V_\bC = \op \tilde I^{p,q}$ be the corresponding Deligne splitting \eqref{SE:deligne}.  Then Theorem \ref{T:dcks}(c) and \eqref{E:Phi_infty} assert that 
\[
  \Phi_\infty(F^\sb,N) \ = \ \Phi_\infty(\tilde F^\sb,N) \ = \ \tilde F^\sb_\infty
  \tand
  \tilde F^p_\infty \ = \ \bigoplus_{s \le n-p} \tilde I^{\sb,s}
\]
is independent of $N \in \tNC$.\footnote{See \cite{MR3133298, HayPearl} for more general convergence results.}

\section{Hodge--Tate degenerations} \label{S:HT}

The main results of this section are: (i) underlying every $\bR$--split polarized mixed Hodge structure $(F^\sb,N)$ is a Hodge--Tate polarized mixed Hodge structure $(\sF^\sb_\fl,N)$ on a Levi subalgebra $\fl_\bR \subset \fg_\bR$ (Theorem \ref{T:underHT}); and (ii) the classification of the Hodge--Tate degenerations (Theorem \ref{T:cHT}).  Corollary to these results we will: (a) see that the nilpotent cone $\tNC \subset \fg_\bR$ underlying a nilpotent orbit is contained in a $\tAd(L_\bR^Y)$--orbit, where $L_\bR^Y$ is a connected Lie subgroup of $G_\bR$ with reductive Lie algebra $\fl_\bR^Y \subset \fl_\bR$ (Corollary \ref{C:Ad-orb}); and (b) obtain the classification theorems of \S\ref{S:CTs}.

\subsection{Definition}  \label{S:dfnHT}

Let $(V_\bR,Q,\rho,\varphi)$ be a Hodge representation of $G_\bR$, and let $D \subset \check D = G_\bC/P$ be the associated Mumford--Tate domain and compact dual.  We say that $D$ \emph{admits a Hodge--Tate degeneration} if there exists a nilpotent orbit $(F^\sb;N_1,\ldots,N_m)$ with nilpotent cone $\tNC$ such that the Deligne splitting \eqref{SE:deligne} of $(F^\sb,W_\sb(\tNC,V_\bR))$ satisfies
\[
  I^{p,q} = 0 \quad\hbox{for all} \quad p\not=q \,.
\]  
In this case we say that the nilpotent orbit $(F^\sb;N_1,\ldots,N_m)$ is a \emph{Hodge--Tate degeneration}.  

We recall some properties of Hodge--Tate degenerations in

\begin{proposition}\label{P:HT}
Let $V_\bR$ admit the structure of a Hodge representation of $G_\bR$, and let $(F^\sb;N_1,\ldots,N_m)$ be a nilpotent orbit on the associated Mumford--Tate domain $D \subset \check D$.
\begin{a_list_emph}
\item 
If $(F^\sb;N_1,\ldots,N_m)$ is Hodge--Tate, then so is the induced nilpotent orbit $(F^\sb_\fg;N_1,\ldots,N_m)$ on $\fg_\bR$.
\item
Suppose that $G_\bR$ is semisimple.  Then $(F^\sb;N_1,\ldots,N_m)$ is Hodge--Tate if and only if $(F^\sb_\fg;N_1,\ldots,N_m)$ is Hodge--Tate.
\item 
If $(F^\sb_\fg;N_1,\ldots,N_m)$ is Hodge--Tate, then the nilpotent orbit $(F;N_1,\ldots,N_m)$ is a ``maximal'' degeneration of Hodge structure in the sense that $\Phi_\infty(F^\sb,N)$ lies in the unique closed $G_\bR$--orbit $\clO\subset \check D$, for any $N \in \tNC$.
\end{a_list_emph}
\end{proposition}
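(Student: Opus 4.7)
My plan is to prove parts (a), (b), and (c) in order, reducing to the $\bR$--split case throughout via Theorem~\ref{T:dcks}(b): the Hodge--Tate condition depends only on the graded pieces of $W_\sb$, which are shared by $F^\sb$ and $\tilde F^\sb$ (since $\d \in L^{-1,-1}_{\fg,\bR}$ acts trivially on $\tGr_\ell W_\sb$), so nothing is lost by replacing $F^\sb$ with $\tilde F^\sb$.

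For (a), I would compute directly using the definition of the induced Deligne splitting,
\[
  I^{p,q}_\fg \ = \ \{ \xi \in \fg_\bC \ \vert \ \xi(I^{r,s}_V) \subset I^{r+p,s+q}_V \ \forall \ r, s \}.
\]
Under the Hodge--Tate hypothesis on $V$, $I^{r,s}_V = 0$ unless $r = s$, so given $\xi \in I^{p,q}_\fg$ with $p \ne q$ the containment $\xi(I^{r,r}_V) \subset I^{r+p,r+q}_V = 0$ forces $\xi$ to annihilate $V_\bC = \bigoplus_r I^{r,r}_V$; faithfulness of $\rho$ then gives $\xi = 0$, so $I^{p,q}_\fg = 0$ for $p \ne q$.

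The forward direction of (b) is (a). For the converse, the key ingredient is Theorem~\ref{T:dcks}(a), which in the $\bR$--split case produces an $\tSL(2,\bR)$--equivariant horizontal embedding $\psi : \bH \to D$ induced by the $\fsl_2$--triple $\{N^+, Y, N\} \subset \fg_\bR$. The maximal compact torus $\tSO(2,\bR) \subset \tSL(2,\bR)$ is generated by $N - N^+$, and by equivariance the element $R_\pi := \exp(\pi(N - N^+)) = -\tId_{\tSL_2}$ maps to $\varphi_{\psi(\bi)}(-1) \in G_\bR$, where $\varphi_{\psi(\bi)}$ is the Hodge circle at the point $\psi(\bi) \in D$. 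The Hodge representation axiom forces $\rho(\varphi_{\psi(\bi)}(-1)) = (-1)^n \tId_V$; evaluating $\rho(R_\pi) = \exp(-\bi\pi \rho(\sZ))$ (with $\sZ = \bi(N - N^+)$) on a $\sZ$--eigenvector of eigenvalue $\mu$ gives $(-1)^\mu = (-1)^n$, so $\mu \equiv n \pmod 2$. Since $\sZ = \tAd_\varrho Y$ is $G_\bC$--conjugate to $Y$ via \eqref{E:ZvY}, every $Y$--eigenvalue on $V$ has parity $n$, and hence $\tGr_\ell^W V = 0$ for $\ell - n$ odd. Combining this with the Hodge--Tate assumption on $\fg$---which implies $\fg_\bC$ preserves the $(p-q)$--grading $V_\bC = \bigoplus_k V^{(k)}$ with $V^{(k)} := \bigoplus_{p-q=k} I^{p,q}_V$ (each $V^{(k)}$ a $\fg_\bC$--submodule)---together with semisimplicity of $G_\bR$, faithfulness of $\rho$, and the polarization $Q$ pairing $V^{(k)}$ with $V^{(-k)}$, one can eliminate the pieces $V^{(k)}$ with $k \ne 0$, concluding that $I^{p,q}_V = 0$ for $p \ne q$.

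Part (c) is obtained by identifying $\Phi_\infty(F^\sb, N) = F^\sb_\infty$ via Theorem~\ref{T:dcks}(c) and examining the stabilizer. The induced limit flag on $\fg$ has $F^p_{\fg,\infty} = \bigoplus_{s \le -p} I^{\sb,s}_\fg$, which collapses under Hodge--Tate on $\fg$ to $F^p_{\fg,\infty} = \bigoplus_{s \le -p} I^{s,s}_\fg$. The Lie algebra of the stabilizer is $F^0_{\fg,\infty} = \bigoplus_{s \le 0} I^{s,s}_\fg$, and each summand $I^{s,s}_\fg$ is defined over $\bR$ (in the $\bR$--split case $\overline{I^{p,q}_\fg} = I^{q,p}_\fg$, which is each summand's own conjugate when $p = q$), so $P_\infty := \tStab_{G_\bC}(F^\sb_\infty)$ is defined over $\bR$. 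Consequently $\tStab_{G_\bR}(F^\sb_\infty) = P_\infty(\bR)$ is a real parabolic subgroup, and the $G_\bR$--orbit $G_\bR \cdot F^\sb_\infty \cong G_\bR / P_\infty(\bR)$ is the compact real flag variety, hence closed in $\check D$. By uniqueness of the closed $G_\bR$--orbit this must be $\clO$. The main obstacle is step (b): the parity of the $Y$--eigenvalues on $V$ falls out cleanly from the $\tSL(2,\bR)$--equivariance, but upgrading this parity condition to the full off--diagonal vanishing $I^{p,q}_V = 0$ (for $p \ne q$ but $p + q$ even) requires carefully combining semisimplicity, faithfulness and the polarization structure to rule out nontrivial $V^{(k)}$ with $k \ne 0$.
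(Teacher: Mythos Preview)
The paper's own ``proof'' of this proposition is just a pointer to external references (\cite{GGR} for (a) and (b), \cite{KP2013} or \cite{GGR} for (c)), so your direct arguments are welcome additions rather than a comparison against a given argument.  Parts (a) and (c) are correct: the computation in (a) using faithfulness is exactly right, and the argument in (c) --- that under the Hodge--Tate hypothesis on $\fg$ each $I^{s,s}_\fg$ is conjugation--stable, so the stabilizer $F^0_{\fg,\infty}=\bigoplus_{s\le 0} I^{s,s}_\fg$ is the Lie algebra of a parabolic defined over $\bR$, whence the orbit is a compact real flag variety and therefore closed --- is clean and complete.

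Part (b), however, has two genuine gaps, one of which you flag yourself.  First, the parity step: you assert that $\exp(\pi(N-N^+))$ maps to $\varphi_{\psi(\bi)}(-1)$ under the $\tSL_2$--embedding, but equivariance only tells you that $\upsilon(\tSO(2))$ lands in the stabilizer $K^0_\bR$ of $\psi(\bi)$, not that it coincides with the Hodge circle.  What horizontality gives is $[\ttE_\varphi-\tfrac12\sZ,\sE]=[\ttE_\varphi-\tfrac12\sZ,\overline\sE]=0$, so $\ttE_\varphi-\tfrac12\sZ$ centralizes the $\fsl_2$; it need not vanish.  (In the paper this identification $\tfrac12\sZ=\ttE_\varphi$ is established only later, in the proof of Theorem~\ref{T:cHT}, under the additional bracket--generating hypothesis.)  Second, and more seriously, even granting the parity of the $Y$--eigenvalues, you have only shown that each $V^{(k)}=\bigoplus_{p-q=k}I^{p,q}_V$ is a $\fg_\bC$--submodule; ruling out $V^{(k)}\ne 0$ for $k\ne 0$ is the whole content of the converse, and ``combining semisimplicity, faithfulness and the polarization'' is not an argument.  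One route to close this: the operator $T$ acting by $p-q$ on $I^{p,q}_V$ lies in $\tEnd_{\fg_\bC}(V_\bC)$ (since $\fg_\bC=\bigoplus I^{p,p}_\fg$ preserves the $V^{(k)}$), and one must show $T=0$; this genuinely uses that the Hodge circle $\varphi_z$ takes values in $G_\bR$, so that the $p$--grading of $V_\bC$ is realized by an element of $\fg_\bC$ and not merely of the commutant.
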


\begin{proof}
Part (b) is \cite[Proposition I.9]{GGR}, and Part (c) is \cite[Corollary 4.3]{KP2013} or \cite[Proposition I.15]{GGR}.  In general, $G_\bR$ is reductive and Proposition \ref{P:HT}(a) follows from the arguments establishing Proposition \ref{P:HT}(b).  
\end{proof}

\begin{remark}
If $G_\bR$ is not semisimple, then the converse to Proposition \ref{P:HT}(a) need not hold: it is possible for a non--Hodge--Tate $(F^\sb,N)$ to induce a Hodge--Tate $(F^\sb_\fg,N)$.  Indeed, this is precisely the case in Theorem \ref{T:underHT}, where the nilpotent orbit $(F^\sb;N_1,\ldots,N_m)$ on the Hodge representation $(V_\bR,Q,\rho,\varphi)$ of the reductive $L_\bR$ will in general fail to be Hodge--Tate, while the induced $(\sF^\sb_\fl;N_1,\ldots,N_m)$ is always Hodge--Tate, cf. Remark \ref{R:underHT1}.
\end{remark}

While the Hodge--Tate degenerations are ``maximal'' in the sense of Proposition \ref{P:HT}(b), the associated representation theory is relatively simple as we will see in the classification of Theorem \ref{T:cHT}.

\subsection{The underlying Hodge--Tate degeneration} \label{S:underHT}

In a suitably interpreted sense all degenerations are induced from a degeneration of Hodge--Tate type.\footnote{Some care must be taken with this statement, as it is not necessarily the case that the underlying degeneration arises algebro--geometrically: this is a statement about the orbit structure and representation theory associated with the $\tSL(2)$--orbit approximating an arbitrary degeneration, which may or may not arise algebro--geometrically.}  The results of this section for $\tdim_\bR\tNC= 1$ first appeared in \cite{GGR}.  Let 
\[
  \sH^m \ := \ \{ z = (z^i) \in \bC^m \ | \ \tIm(z^i) > 0 \} \,.
\]

\begin{theorem} \label{T:underHT}
Let $(V_\bR,Q,\rho,\varphi)$ be a Hodge representation of a semisimple Lie group $G_\bR$, and let $D$ be the associated Mumford--Tate domain.  Suppose that $(F^\sb;N_1,\ldots,N_m)$ is a $\bR$--split nilpotent orbit.  
\begin{a_list_emph}
\item
Let $\fg_\bC = \op I^{p,q}_\fg$ be the associated Deligne splitting, \cf\eqref{E:useg} and \eqref{SE:deligne}, and set
\begin{equation} \label{E:dfnL}
  \fl_\bC \ := \ \bigoplus_p I^{p,p}_\fg \,.
\end{equation}
Then $\fl_\bC$ is a Levi subalgebra of $\fg_\bC$ defined over $\bR$ with real form $\fl_\bR = \fl_\bC \cap \fg_\bR$ and $N_i \in \fl_\bR$.  Let $L_\bR \subset G_\bR$ be the connected Lie subgroup with Levi algebra $\fl_\bR$.
\item 
Given $z \in \sH^m$, let $\varphi_z : S^1 \to G_\bR$ denote the Hodge structure on $V_\bR$ parameterized by $\exp(z^i N_i)\cdot F^\sb \in D$.  Then the circle $\varphi_z$ is contained in $L_\bR$ for all $z \in \sH^m$; that is, $\tim\,\varphi_z \subset L_\bR$.  Equivalently, $(V_\bR,Q,\left.\rho\right|_{L_\bR},\varphi_z)$ is a Hodge representation of $L_\bR$; let $\sD$ denote the associated Mumford--Tate domain.
\item
The induced nilpotent orbit $(\sF^\sb_\fl;{N_1},\ldots,{N_m})$ on $\sD$ is a Hodge--Tate degeneration.
\end{a_list_emph}
\end{theorem}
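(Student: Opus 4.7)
The plan is to exploit the bigrading $\fg_\bC = \bigoplus I^{p,q}_\fg$ coming from the Deligne splitting of the induced $\bR$-split polarized mixed Hodge structure $(F^\sb_\fg, W^\fg_\sb)$ on $\fg$ (using \eqref{E:useg}), together with the $\tSL_2\bR$-equivariance supplied by Theorem \ref{T:dcks}(a) on each ray in $\tNC$.

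For (a), the bracket relations $[I^{p,q}_\fg, I^{r,s}_\fg] \subset I^{p+r,q+s}_\fg$ immediately yield that $\fl_\bC = \bigoplus_p I^{p,p}_\fg$ is closed under the Lie bracket, and the $\bR$-split identity $\overline{I^{p,q}_\fg} = I^{q,p}_\fg$ gives $\overline{\fl_\bC} = \fl_\bC$, so $\fl_\bC$ has real form $\fl_\bR = \fl_\bC \cap \fg_\bR$. Each $N_i$ is a $(-1,-1)$-morphism of the MHS, so $N_i \in I^{-1,-1}_\fg \subset \fl_\bR$. To recognise $\fl_\bC$ as a Levi subalgebra in the sense of \S\ref{S:GE}, define $T \in \tEnd(\fg_\bC)$ to act on $I^{p,q}_\fg$ as multiplication by $p-q$; the bracket relations force $T$ to be a derivation, hence $T = \tad\,E$ for some $E \in \fg_\bC$ (as $\fg_\bC$ is semisimple, all derivations are inner). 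The element $E$ is semisimple with integer eigenvalues, so may be conjugated into a Cartan, whence it is a grading element with $\fl_\bC = \fg^0_E$.

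For (b), by Lemma \ref{L:levi1} it suffices to show $\tim\,\varphi_z \subset L_\bR$ for every $z \in \sH^m$. Write $z = x + \bi\,t$ with $x,t \in \bR^m$ and $t^i > 0$, and set $N(t) := t^i N_i \in \tNC$. By Theorem \ref{T:cks}(a)$\,$--$\,$(c), the pair $(F^\sb, N(t))$ is a one-variable $\bR$-split PMHS, so Theorem \ref{T:dcks}(a) gives that $w \mapsto e^{wN(t)}F^\sb$ is $\tSL_2\bR$-equivariant, with the acting $\tSL_2\bR \subset G_\bR$ generated by the Cayley triple $(N(t)^+,Y,N(t))$. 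In the $\bR$-split case this triple is adapted to the bigrading: $N(t) \in I^{-1,-1}_\fg$, the weight grading $Y \in I^{0,0}_\fg$ (as $Y$ acts on $I^{p,q}_\fg$ by $p+q$), and the canonical CKS choice of $N(t)^+$ lies in $I^{1,1}_\fg$. Hence the entire triple lies in $\fl_\bR$ and the acting $\tSL_2\bR$ factors through $L_\bR$. The Hodge circle $\varphi_{\bi t}$ at $\psi(\bi t) = e^{\bi N(t)}F^\sb$ is the image of the isotropy $S^1 \subset \tSL_2\bR$ of $\bi \in \sH$, so $\tim\,\varphi_{\bi t} \subset L_\bR$. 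Finally, $\psi(z) = e^{x^i N_i}\,\psi(\bi t)$ and $e^{x^i N_i} \in L_\bR$ (since $x^i \in \bR$ and $N_i \in \fl_\bR$), so $\varphi_z = \tAd_{\texp(x^i N_i)}\circ \varphi_{\bi t}$ also takes values in $L_\bR$.

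For (c), by (b) the quadruple $(V_\bR, Q, \rho|_{L_\bR}, \varphi_z)$ is a Hodge representation of $L_\bR$; invoking \eqref{E:useg} we pass to the induced Hodge representation on $\fl_\bR$. The Hodge filtration $\sF^p_\fl = F^p_\fg \cap \fl_\bC$ and the weight filtration $W^\fl_\ell = W^\fg_\ell \cap \fl_\bC$ on $\fl_\bC$ are obtained by restricting the corresponding structures on $\fg_\bC$ (the weight filtration by restriction since $N(t) \in \fl_\bC$ acts on $\fl$ and $\fg$ with a common grading element $Y \in \fl_\bC$). Uniqueness of the Deligne splitting then forces $I^{p,q}_\fl = I^{p,q}_\fg \cap \fl_\bC$; by the definition \eqref{E:dfnL} of $\fl_\bC$ this equals $I^{p,p}_\fg$ when $p=q$ and vanishes otherwise. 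Thus the Deligne splitting on $\fl$ is purely diagonal, which is to say $(\sF^\sb_\fl; N_1,\ldots,N_m)$ is a Hodge--Tate degeneration.

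The principal technical point lies in (b): confirming that the $\fsl_2$-triple completing $N(t) \in \tNC$ can be chosen with $N(t)^+ \in I^{1,1}_\fg$ for every $t$. This is a consequence of the Cattani--Kaplan--Schmid refinement of the Jacobson--Morosov theorem in the $\bR$-split setting, and it is essential here that the weight grading $Y$ is common to every $N \in \tNC$ (Theorem \ref{T:cks}(b)), so that a single Levi $\fl_\bC$ accommodates every ray simultaneously.
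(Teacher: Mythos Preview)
Your arguments for (a) and (c) are sound and essentially match the paper's; the only cosmetic difference is that the paper names your grading element $E$ explicitly as $\ttE' - \overline{\ttE'}$ (with $\ttE'$ the semisimple element acting on $I^{p,q}_\fg$ by $p$) rather than producing it abstractly from the fact that derivations of a semisimple algebra are inner.

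Part (b), however, has a real gap.  The assertion ``the Hodge circle $\varphi_{\bi t}$ at $\psi(\bi t)$ is the image of the isotropy $S^1 \subset \tSL_2\bR$ of $\bi$'' is false whenever the orbit is \emph{not} Hodge--Tate on $\fg$.  The isotropy circle of your $\tSL_2\bR$ is generated by $\sZ = \bi(N(t)-N(t)^+)$, and $\sZ$ acts on (the $\tAd_\varrho$--translate of) $I^{p,q}_\fg$ with eigenvalue $p+q$; the Hodge circle $\varphi_{\bi t}$ is generated by the grading element $\ttE_{\varphi_{\bi t}}$, which acts with eigenvalue $p$.  Their difference $\zeta = \ttE_{\varphi_{\bi t}} - \half\sZ$ is a nonzero element of the center of $\fl_\bC$ as soon as some off--diagonal $I^{p,q}_\fg$, $p\ne q$, is nonzero.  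For a concrete instance take the first $\tSL(2)$ of Example~\ref{eg:B2P1}: there $\ttE_\varphi = \ttS^1$ while $\half\sZ = \ttS^1 - \half\ttS^2$.  So what you have actually shown is only that the $\tSL_2$--isotropy lies in $L_\bR$, which does not suffice.

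The paper closes this gap differently.  It first observes that restricting $(F^\sb_\fg, W_\sb(\tNC,\fg_\bR))$ to $\fl_\bR$ yields a polarized mixed Hodge structure $(\sF^\sb_\fl,\sW_\sb(\tNC,\fl_\bR))$ on $\fl_\bR$, so that (via Theorem~\ref{T:cks}) $\exp(z^iN_i)\cdot\sF^\sb_\fl$ is a Hodge structure on $\fl_\bR$ for every $z\in\sH^m$; this exhibits $\fl_\bR$ as a sub--Hodge structure of $(\fg_\bR,\varphi_z)$.  Lemma~\ref{L:levi1} then converts ``$\fl_\bR$ is a $\varphi_z$--stable Levi'' into ``$\tim\,\varphi_z \subset L_\bR$''.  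The proof of that lemma is exactly where the missing center component $\zeta$ gets handled: it shows that the \emph{full} grading element $\ttE_{\varphi_z}$, not merely its projection $\half\sZ$ to $\fl^\tss_\bC$, lies in $\fl_\bC$.  If you want to repair your argument in its own spirit, the clean fix is to note directly that $\ttE'$ (the element splitting $F^\sb_\fg$) lies in $I^{0,0}_\fg \subset \fl_\bC$, and then $\ttE_{\varphi_{\bi t}} = \tAd_{\varrho_t}\ttE' \in \fl_\bC$ since $\varrho_t \in L_\bC$.
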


\begin{remark}  \label{R:underHT}
An immediate and important consequence of Theorem \ref{T:underHT}(b) is that any nilpotent orbit on $\sD$ induces a nilpotent orbit on $D$; so we may think of the nilpotent orbit $(\sF^\sb_\fl;{N_1},\ldots,{N_m})$ as ``the Hodge--Tate degeneration underlying the nilpotent orbit $(F^\sb;N_1,\ldots,N_m)$.''  From this perspective, Theorem \ref{T:underHT} asserts that the essential structure/relationship is between the $\{N_1,\ldots,N_m\}$ and the Levi subalgebra $\fl$; the remaining structure on $\fg = \fl \op \fl^\perp$,\footnote{This $\fl$--module decomposition of $\fg$ exists because $\fl$ is reductive.} that is the Hodge structure on $\fl^\perp$, is induced from the $\fl$--module structure on $\fl^\perp$.\footnote{This sort of idea goes back to Bala and Carter's classification \cite{MR0417306, MR0417307} of nilpotent orbits $\cN \subset \fg_\bC$, where the idea is to look at minimal Levi subalgebras $\fl$ containing a fixed $N \in \cN$, and to classify the pairs $(N,\fl)$.  (In fact, the idea goes back farther to Dynkin \cite{MR0047629_trans}, who looked at minimal reductive subalgebras containing $N$, but this approach does not seem to work as well.)}
\end{remark}

\begin{remark} \label{R:underHT1}
Each 
\[
  \sV_\ell \ = \ \bigoplus_{p-q=\ell} I^{p,q}_\fg
\]
is a $\fl_\bC$--module, and $\sV_\ell + \sV_{-\ell}$ naturally has the structure of a Hodge representation of $L_\bR$.  In particular, $V = \op_{\ell\ge0} \sV_\ell$ is a coarse branching of $V$ as an $L_\bR$--Hodge representation.  (``Coarse'' because the $\sV_\ell$ need not be irreducible.) 
\end{remark}

\begin{proof}
The fact that the nilpotent orbit is $\bR$--split implies $\fl_\bC$ is a conjugation--stable subalgebra of $\fg_\bC$ and 
\[
  N_i \ \in \ I^{-1,-1}_{\fg,\bR} \ \subset \ \fl_\bR \,.
\]
As the zero eigenspace for the grading element $\ttE - \overline \ttE$, the subalgebra $\fl_\bC$ is necessarily a Levi subalgebra.  This establishes Theorem \ref{T:underHT}(a).  

Let $\cC$ be the nilpotent cone \eqref{E:s} underlying the nilpotent orbit.  Observe that the polarized mixed Hodge structure $(F^\sb_\fg,W_\sb(\tNC,\fg_\bR))$ on $\fg_\bR$ induces a polarized mixed Hodge sub-structure $(\sF^\sb_\fl , \sW_\sb(\tNC,\fl_\bR))$ on $\fl_\bR$ by 
\begin{equation}\label{E:sHT2}
  \sF^p_\fl \ := \ F^p \,\cap\, \fl_\bC 
  \ = \ \bigoplus_{q\ge p} I^{q,q}_\fg \tand
  \sW_\ell(\tNC,\fl_\bR) \ := \ W_\ell(\tNC,\fg_\bR) \,\cap\, \fl_\bR 
  \ = \ \bigoplus_{q \le \ell} I^{q,q}_{\fg,\bR} \,.
\end{equation}
Theorem \ref{T:cks}(c) implies that the Hodge flag $\exp(z^iN_i) \cdot \sF^\sb_\fl$ defines a Hodge structure on $\fl_\bR$; equivalently, $\fl_\bR$ is a sub--Hodge structure of $(\fg_\bR,\varphi_z)$.  Theorem \ref{T:underHT}(b) now follows from Lemma \ref{L:levi1}.

Finally, \eqref{E:dfnL} and \eqref{E:sHT2} yield Theorem \ref{T:underHT}(c).
\end{proof}

\begin{remark}[Mumford--Tate domain for the Hodge structures $\left.\varphi_z\right|_{\fl_\bR}$] \label{R:HSonS}
The Mumford--Tate domain $\sD$ for the Hodge structures $\varphi_z$ on $\fl_\bR$ may be viewed as a subset of $D$, the Mumford--Tate domain for the Hodge structure $\varphi$ on $\fg_\bR$ (or $V_\bR$).  Let $L_\bC \subset G_\bC$ be the connected Lie subgroup with Lie algebra $\fl_\bC$, and set $\check \sD = L_\bC \cdot F^\sb$.  Then $\sD \simeq \check \sD \,\cap\, D$. 
\end{remark}


\begin{corollary} \label{C:Ad-orb}
Given an $\bR$--split nilpotent orbit \eqref{E:htno} on a Mumford--Tate domain $D = G_\bR/K_\bR^0$ with nilpotent cone $\cC$ as in \eqref{E:s}, let $\fl_\bR$ be the Levi subalgebra \eqref{E:dfnL}.  Let $Y \in \fl_\bR$ be the grading element defined by 
\[
  \left. Y \right|_{I^{p,p}_\fg} \ = \ 2p \,,
\]
and let $L_\bR^Y$ denote the connected subgroup of $L_\bR$ stabilizing $Y$ under the adjoint action.  Then the Lie algebra $\fl_\bR^Y = \{ \xi \in \fl_\bR \ | \ [\xi,Y]=0\}$ is Levi and $\cC \subset \tNilp(\fl_\bR)$ is contained in an $\tAd(L_\bR^Y)$--orbit.
\end{corollary}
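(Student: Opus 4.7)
The argument splits into verifying that $\fl_\bR^Y$ is Levi and that $\cC$ lies in a single $\tAd(L_\bR^Y)$-orbit.

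For the Levi claim, the multiplicativity $[I^{p,q}_\fg, I^{r,s}_\fg] \subset I^{p+r, q+s}_\fg$ of the Deligne bigrading implies that $Y \in I^{0,0}_\fg \cap \fg_\bR$ (the neutral element of the $\fsl_2$-triple produced by the $\bR$-split PMHS) acts via $\tad$ on $I^{r,s}_\fg$ with integer eigenvalue $r+s$---this is just the identification of $Y$ with the weight grading on $\tGr^W_\sb \fg$. In particular $Y$ is a grading element of $\fg_\bR$, and its centralizer in $\fl_\bR = \bigoplus_p (I^{p,p}_\fg \cap \fg_\bR)$ is precisely $\fl_\bR^Y = I^{0,0}_\fg \cap \fg_\bR$. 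This subspace is the simultaneous zero eigenspace of the two commuting grading elements $Y$ (eigenvalue $r+s$) and $\ttE - \overline{\ttE}$ (eigenvalue $r-s$, cf.~the proof of Theorem \ref{T:underHT}(a)); for any integer $c$ with $|c|$ exceeding the weight, $\ttE_0 := Y + c(\ttE - \overline{\ttE})$ is a grading element of $\fg_\bC$ whose zero eigenspace equals $I^{0,0}_\fg$, so $\fl_\bR^Y$ is Levi in $\fg_\bR$.

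The orbit claim rests on three ingredients. \emph{(i) A common $\fsl_2$-triple structure.} For each $N \in \cC$, Theorem \ref{T:dcks}(a) applied to the $\bR$-split PMHS $(F^\sb, N)$ yields a horizontal, $\tSL_2\bR$-equivariant embedding of the upper half plane whose infinitesimal data form a standard triple $\{N^+, Y, N\}$ with $N^+ \in I^{1,1}_\fg \subset \fl_\bR$. The neutral element $Y$ does not depend on $N$: Theorem \ref{T:cks}(b) gives $W_\sb(N, V_\bR) = W_\sb(\cC, V_\bR)$ for every $N \in \cC$, so the Deligne splitting of $(F^\sb_\fg, W_\sb(\cC, V_\bR)[-n])$---and with it $Y$---depends only on $(F^\sb, \cC)$. \emph{(ii) Constancy of the $\tAd(L_\bR)$-orbit.} Theorem \ref{T:underHT}(c) realizes $(\sF^\sb_\fl; N_1, \ldots, N_m)$ as a Hodge--Tate degeneration on the Mumford--Tate domain of $L_\bR$, and Cattani--Kaplan--Schmid applied within $L_\bR$ (the constancy of the Jordan type on the connected cone $\cC$, combined with $L_\bR$ being connected) places all of $\cC$ in a single $\tAd(L_\bR)$-orbit inside $\tNilp(\fl_\bR)$. \emph{(iii) Mal'cev.} Given $N_1, N_2 \in \cC$ and $g \in L_\bR$ with $\tAd_g N_1 = N_2$, the two triples $\tAd_g \{N_1^+, Y, N_1\}$ and $\{N_2^+, Y, N_2\}$ share the nilnegative $N_2$; Mal'cev's theorem produces $k \in Z_{L_\bR}(N_2)^\circ$ conjugating the first to the second, so that $kg$ fixes $Y$ and sends $N_1$ to $N_2$, i.e.~$kg \in L_\bR^Y$.

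The main obstacle is ingredient (ii): extracting the precise CKS-type statement that upgrades the common weight filtration on $\cC$ not merely to a common $\tAd(L_\bC)$-orbit but to a single $\tAd(L_\bR)$-orbit in $\tNilp(\fl_\bR)$ (the connectedness of $\cC$ and of $L_\bR$ is what bridges the complex-to-real gap). Ingredients (i), (iii) and the Levi verification are then routine.
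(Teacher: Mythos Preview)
Your Levi argument is correct and essentially the paper's (the paper is terser: it just notes $Y$ is a grading element and quotes the definition of Levi to get $\fl_\bR^Y = I^{0,0}_{\fg,\bR}$, without the explicit $Y + c(\ttE-\overline\ttE)$ combination).

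For the orbit claim your route is genuinely different from the paper's, and the obstacle you flag at (ii) is a real gap as written.  Constancy of $W_\sb(N)$ on $\cC$ (Cattani--Kaplan, Theorem~\ref{T:cks}(b)) does give a common Jordan type, hence a single $\tAd(L_\bC)$-orbit; but your assertion that ``connectedness of $\cC$ and of $L_\bR$ bridges the complex-to-real gap'' is not justified.  What is missing is the fact that the $L_\bR$-orbits lying in a fixed $L_\bC$-orbit are open in its real points (true, via a stabilizer-dimension comparison, but you do not supply it).  Granting (ii), your Mal'cev step (iii) is fine --- but the whole detour through $L_\bR$-orbits and Mal'cev is avoidable.

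The paper's argument is shorter and goes directly to the $L_\bR^Y$-orbit.  Set $\cW_\cC = \{ N' \in I^{-1,-1}_{\fg,\bR} \mid W_\sb(N') = W_\sb(\cC)\}$; this contains $\cC$.  For each $N' \in \cW_\cC$ there is (by \S\ref{S:JMf}(iii)) a unique $N'_+ \in I^{1,1}_{\fg,\bR}$ completing $\{N'_+,Y,N'\}$ to a standard triple, and the resulting $\fsl_2$-decomposition of $\fl_\bR$ shows that $\tad(N') : I^{0,0}_{\fg,\bR} \to I^{-1,-1}_{\fg,\bR}$ is surjective (any irreducible contributing a weight $-2$ vector also contributes a weight $0$ vector that $N'$ maps onto it).  Hence each $L_\bR^Y$-orbit in $\cW_\cC$ is \emph{open} in the ambient vector space $I^{-1,-1}_{\fg,\bR}$, so $\cW_\cC$ is a disjoint union of such open orbits and the connected $\cC$ must lie in one.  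No Mal'cev, no intermediate passage through $L_\bR$-orbits, and the complex-to-real issue never arises.
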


\begin{proof}[Proof of Corollary \ref{C:Ad-orb}]
Recall that $Y$ is a grading element, \cf\eqref{E:Y=GE}; it then follows from the definition (\S\ref{S:GE}) that 
\[
  \fl_\bR^Y \ = \ I^{0,0}_{\fg,\bR}
\]
is Levi.  

Cattani and Kaplan \cite[(3.3)]{MR664326} proved that the Jacobson--Morosov filtration $W_\sb(N')$ is independent of our choice of $N' \in \cC$; we denote this weight filtration by $W_\sb(\cC)$.  Let 
\[
  \cW_\cC \ = \ \{ N' \in I^{-1,-1}_{\fg,\bR} \ | \ W(N') = W(\cC) \}\,.
\]
Of course, $\cC \subset \cW_\cC$.  It suffices to show that $\cW_\cC$ is a disjoint union of open $L^Y_\bR$--orbits in $I^{-1,-1}_{\fg,\bR}$; for it then follows from the connectedness of $\cC$ that the cone is contained in an $\tAd(L^Y_\bR)$--orbit.

First observe that \S\ref{S:JMf}(iii) implies that for each $N' \in \cW_\cC$, there exists a unique $N'+_ \in I^{1,1}_{\fg,\bR}$ such that $\{ N'_+ , Y , N' \}$ is a standard triple.  Define
\[
  P^\ell_{N'} \ := \ \{ \xi \in \fl_\bR \ | \ [Y,\xi] = \ell\,\xi \,,\ 
  [N_+' , \xi] = 0 \}
  \quad \hbox{for all} \quad \ell \ge 0 \,.
\]  
This is the vector space of ``highest weight vectors" in the ``isotypic component of weight $\ell$" for the action of $\fsl_\bR2 = \tspan_\bR\{ N'_+ , Y , N'\}$ on $\fl_\bR$.  It is a basic result of $\fsl(2)$--representation theory that 
\[
  \fl_\bR \ = \ \bigoplus_{\mystack{\ell\ge0}{0 \le a \le \ell}} 
  (N')^a P^\ell{N'} \,.
\]
From this, and $(N')^{\ell+1}(P^\ell_{N'}) = 0$, we may deduce that $\tAd(L_\bR^Y)$--orbit of $N'$ is open in $I^{-1,-1}_{\fg,\bR}$.
\end{proof}

\subsection{Classification of Hodge--Tate degenerations} \label{S:cHT}

In \cite[Lemma V.7 and Theorem V.15]{GGR} it is shown that a \emph{period domain} parameterizing weight $n$ polarized Hodge structures admits a Hodge--Tate degeneration if and only if the Hodge numbers satisfy 
\begin{equation}\label{E:pdht}
  h^{n,0} \,\le\, h^{n-1,1} \,\le \cdots \le \, h^{n-m,m} \,,
\end{equation}
with $m$ defined by $n \in \{ 2m,2m+1\}$.  In the more general setting of Mumford--Tate domains, \eqref{E:pdht} is a necessary, but not sufficient, condition for the existence of a Hodge--Tate degeneration \cite[Lemma V.7 and Remark V.16]{GGR}.  Here we extend the classification to arbitrary Mumford--Tate domains $D$ with the property that the IPR is bracket--generating (we can always reduce to this case \cite[\S3.3]{MR3217458}).

\begin{theorem} \label{T:cHT}
Suppose that $V_\bR$ is a Hodge representation of a real semisimple algebraic group $G_\bR$.  Let $D \subset G_\bC/P$ be the associated Mumford--Tate domain and compact dual, and assume that the infinitesimal period relation is bracket--generating.  
Then $D$ admits a Hodge--Tate degeneration $(F^\sb,N)$ if and only if there exists a standard triple $\{ N^+ , Y , N\} \subset \fg_\bR$ such that the following two conditions hold:
\begin{a_list_emph}
\item
The neutral element $Y$ is even, and $\fp=W_0(N^+,\fg_\bC)$.  In this case, $\half Y$ is the grading element \eqref{E:ttE} associated with $\fp$ and $F^p_\fg=W_{-2p}(N^+,\fg_\bC)$.
\item 
The compact characteristic vector \emph{(\S\ref{S:rtno_R})} of the nilpotent orbit $\cN = \tAd(G_\bR)\cdot N$ satisfies the following conditions: $\gamma_i(\cN) \equiv 0$ mod $4$, for all $i$; and for the noncompact simple root, $\a'(\cN)$ is even and $\a'(\cN)/2$ is odd.
\end{a_list_emph}
If it exists, then the orbit $\cN$ is unique.  That is, given a second Hodge--Tate nilpotent orbit $(\tilde F^\sb,\tilde N)$, it is the case that $\tilde N \in \cN$.
\end{theorem}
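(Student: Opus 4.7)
The plan is to exploit Proposition \ref{P:HT}(b), which under the semisimple hypothesis transports Hodge--Tate on $V_\bR$ to Hodge--Tate on the weight-zero representation $\fg_\bR$, so that all bookkeeping can be done inside $\fg_\bC$ via the Deligne splitting $\fg_\bC = \bigoplus_p I^{p,p}_\fg$. For $(\Rightarrow)$, extend $N$ to a standard triple $\{N^+, Y, N\}$. The Jacobson--Morosov filtration \eqref{E:JMf} coincides with the PMHS weight filtration $W_\sb(N,\fg_\bR)$, and the formula $W_\ell = \bigoplus_{m\le\ell}\fg^m_Y$ combined with \eqref{E:FW} forces $I^{p,p}_\fg$ to be the $2p$-eigenspace of $Y$. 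Thus $Y$ is even; the stabilizer $\fp = F^0_\fg = \bigoplus_{r\ge 0} I^{r,r}_\fg$ equals the Jacobson--Morosov parabolic $W_0(N^+,\fg_\bC)$; and $F^p_\fg = \bigoplus_{r\ge p} I^{r,r}_\fg = W_{-2p}(N^+,\fg_\bC)$. The identity $\half Y = \ttE_\fp$ then follows from \eqref{E:even}, giving part (a).

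For part (b), replace $\{N^+, Y, N\}$ by a $G_\bR$-conjugate Cayley triple with respect to the Cartan decomposition \eqref{E:CD} determined by $\varphi$ (Remark \ref{R:cayley}), and form its Cayley transform \eqref{E:DKStrp} $\{\overline\sE,\sZ,\sE\}$, so $\sZ = \tAd_\varrho(Y) \in \fk_\bC$. The element $\ttE := \half\sZ \in \fk_\bC$ is $G_\bC$-conjugate to $\ttE_\fp = \half Y$ via $\varrho$, and by the $\tSL(2)$-orbit theorem serves as the Hodge grading element at the point $\varphi_\bi := e^{\bi N} F^\sb \in D$, acting as $p$ on $\fg^{p,-p}_{\varphi_\bi}$. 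The parity conditions $\b(\ttE) \in 2\bZ$ for $\fg^\b \subset \fk_\bC$ and $\b(\ttE) \in 2\bZ+1$ for $\fg^\b \subset \fk_\bC^\perp$ (a direct consequence of \eqref{SE:kCkperpC}) then translate, after $K_\bC$-conjugating $\sZ$ into the fixed Cartan $\fh$ with $\gamma_i(\sZ) \ge 0$, into $\gamma_i(\sZ) \in 4\bZ$ for each simple compact root $\gamma_i \in \sS_\fk$ and $\a'(\sZ) \in 4\bZ+2$ for the noncompact simple root, which is (b).

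For $(\Leftarrow)$, given $\{N^+, Y, N\}$ satisfying (a) and (b), define the Hodge flag by $F^p := \bigoplus_{m\ge 2p} V^m_Y$, where $V_\bC = \bigoplus V^m_Y$ is the $Y$-eigenspace decomposition on the Hodge representation. Condition (a) forces $F^0_\fg = W_0(N^+, \fg_\bC) = \fp$, placing $F^\sb \in \check D$, and horizontality $N(F^p) \subset F^{p-1}$ is immediate from $[Y,N]=-2N$. A direct computation from \eqref{SE:deligne}, using $\overline{V^m_Y} = V^m_Y$ (since $Y \in \fg_\bR$) together with $W_\sb(N, V_\bR) = \bigoplus_{m\le\sb} V^m_Y$, yields $I^{p,p} = V^{2p}_Y$ and $I^{p,q} = 0$ for $p \neq q$, producing an $\bR$-split Hodge--Tate mixed Hodge structure. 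Condition (b) is precisely the parity condition characterizing $\half\sZ$ (the compact Cartan representative $K_\bC$-conjugate to $\half Y$) as the infinitesimal generator of a circle $\varphi : S^1 \to G_\bR$ that gives $V_\bR$ a Hodge representation structure, supplying the polarization; Theorem \ref{T:dcks}(a) then assembles $(F^\sb, N)$ into a polarized nilpotent orbit. For uniqueness, (a) determines $\half Y = \ttE_\fp$ up to $G_\bR$-conjugation, and (b) records the compact characteristic vector of $\cN = \tAd(G_\bR) \cdot N$, a complete invariant by Theorem \ref{T:ccv}; hence any two Hodge--Tate degenerations $(F^\sb,N)$ and $(\tilde F^\sb,\tilde N)$ satisfy $\tilde N \in \cN$.

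The main technical point --- shared by both directions --- is identifying $\half\sZ$ (after $K_\bC$-conjugation into the compact Cartan) with the Hodge grading element of the pure Hodge structure at $\varphi_\bi$ via the $\tSL(2)$-orbit theorem. Once this is in hand, the mod-$4$ divisibility of $\gamma_i(\sZ)$ and the congruence $\a'(\sZ) \equiv 2 \pmod{4}$ are the direct translations of the parity relations in \eqref{SE:kCkperpC} characterizing a Hodge representation, and the reverse direction amounts to reading those translations backwards.
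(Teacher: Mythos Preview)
Your overall strategy matches the paper's, but there is a gap in how you link the standard triple to the Hodge data.

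In $(\Rightarrow)$ you ``extend $N$ to a standard triple'' and then assert that $W_\ell=\bigoplus_{m\le\ell}\fg^m_Y$ together with \eqref{E:FW} forces $I^{p,p}_\fg$ to be the $2p$-eigenspace of $Y$. This fails for a generic extension: both $\{\fg^m_Y\}_m$ and $\{I^{p,p}_\fg\}_p$ split the \emph{same} filtration $W_\sb(N,\fg_\bR)$, but distinct splittings of a filtration need not coincide, so neither $\fp=W_0(N^+,\fg_\bC)$ nor $F^p_\fg=W_{-2p}(N^+,\fg_\bC)$ follows. The paper instead \emph{chooses} $Y$ by $Y|_{I^{p,p}_\fg}=2p$ and completes to a triple with $N^+\in I^{1,1}_\fg$, $N\in I^{-1,-1}_\fg$ (see \eqref{E:NYN}), after which (a) is immediate. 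This specific choice is also what makes (b) work. You next conjugate to a Cayley triple with respect to the \emph{ambient} Cartan decomposition and then invoke the parity relations \eqref{SE:kCkperpC} at $\varphi_\bi=e^{\bi N}F^\sb$; but \eqref{SE:kCkperpC} describes $\fk_\bC$ relative to $\varphi_\bi$, not the ambient $\fk_\bC$, so the parity of $\gamma_i(\sZ)$ for the \emph{ambient} compact simple roots is not what you have computed. The paper's point is that with the triple \eqref{E:NYN} no conjugation is needed: setting $\varphi:=\varrho(F^\sb)$ one has $\ttE_\varphi=\tfrac12\sZ$, whence $\overline\sE\in\fg^{1}_\varphi\subset\fk^\perp_\bC$, $\sZ\in\fg^0_\varphi\subset\fk_\bC$, $\sE\in\fg^{-1}_\varphi\subset\fk^\perp_\bC$, so $\{\overline\sE,\sZ,\sE\}$ is already a DKS-triple for the Cartan decomposition \emph{determined by $\varphi$ itself}, and (b) follows directly.

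A smaller issue in $(\Leftarrow)$: you set $F^p:=\bigoplus_{m\ge 2p}V^m_Y$ on $V$, but condition (a) only says $Y$ is even on $\fg_\bC$, not on $V_\bC$; in odd weight $Y$ has odd eigenvalues on $V$ (already for $\fsl_2\bR$ on $\bC^2$) and your filtration has the wrong jumps. The paper works on $\fg_\bC$, setting $F^p_\fg=W_{-2p}(N^+,\fg_\bC)$, checks via (b) that $\varphi:=\varrho(F^\sb_\fg)\in D$, and then applies Theorem~\ref{T:cks}(a) (not Theorem~\ref{T:dcks}(a), which goes the other way) to conclude that $(F^\sb,N)$ is a polarized mixed Hodge structure.
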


\noindent 
The necessity of Theorem \ref{T:cHT}(a) was observed in \cite{GGR}.  It implies that the Lie algebra $\fp = F^0_\fg$ of the stabilizer $P=\tStab_{G_\bC}(F^\sb)$ is an even Jacobson--Morosov parabolic.  As illustrated by the examples at the end of this section this constrains the (conjugacy classes of the) parabolics $P$, and therefore the compact duals, that may arise.



\begin{proof}[Proof of Theorem \ref{T:cHT}]

$(\Longrightarrow)$  Suppose that there exists a Hodge--Tate nilpotent orbit $(F^\sb,N)$.  Then the induced nilpotent orbit $(F^\sb_\fg,N)$ is also Hodge--Tate (Proposition \ref{P:HT}).  Thus the Lie algebra of the parabolic subgroup $P \subset G_\bC$ stabilizing the Hodge flag $F^\sb$ is 
\begin{equation} \label{E:1}
  \fp \ = \ F^0_\fg \ = \ \bigoplus_{p\ge0} I^{p,\sb}_\fg
  \ = \ \bigoplus_{p\ge0} I^{p,p}_\fg 
  \ = \ \bigoplus_{p+q \ge 0} I^{p,q}_\fg \,;
\end{equation}
here, the second equality is due to \eqref{E:FW}, and the last two follow from the hypothesis that $(F^\sb_\fg,N)$ is Hodge--Tate.  Without loss of generality, the polarized mixed Hodge structure $(F^\sb,W_\sb(N,V_\bR))$ is $\bR$--split; then the induced polarized mixed Hodge structure $(F^\sb_\fg,W_\sb(N,\fg_\bR))$ is also $\bR$--split.  Therefore, we may complete $N$ to a standard triple (\S\ref{S:sl2trp}) with 
\begin{equation}\label{E:NYN}
  N \in I^{-1,-1}_\fg\,,\quad
  Y \in I^{0,0}_\fg \tand N^+ \in I^{1,1}_\fg \,.
\end{equation}
It follows that 
\[
  \fp \ = \ W_0(N^+,\fg_\bC)
\]
is a Jacobson--Morosov parabolic subalgebra.  Moreover, the neutral element 
\begin{equation} \label{E:2}
  \hbox{$Y$ acts on $I^{p,p}_\fg$ by the the scalar $2p$,}
\end{equation} 
establishing the necessity of (a).  

Since the infinitesimal period relation is bracket--generating, the grading element \eqref{E:ttE} associated with $\fp$ necessarily acts on $I^{p,q}_\fg$ by the eigenvalue $p$.  Given this, from \eqref{E:1} and \eqref{E:2}  we see that
\begin{equation} \label{E:halfY}
  \hbox{$\half Y$ is the grading element \eqref{E:ttE} associated with $\fp$.}
\end{equation}
Let $\fsl_2\bR \subset \fg_\bR$ be the TDS spanned by the standard triple \eqref{E:NYN}, and let $\tSL_2\bR \subset G_\bR$ be the corresponding subgroup.  By Theorem \ref{T:dcks}(a), the map $z \mapsto \exp(zN) \cdot F^\sb$ is a holomorphic, $\tSL_2\bR$--equivariant, horizontal embedding of the upper-half plane into $D$.  Let $\sH \subset D$ denote the image.  Recall the element $\varrho$ of \eqref{E:bd} and the triple $\{\overline\sE,\sZ,\sE\}$ of \eqref{E:ZvY}.  Note that $\varrho$ lies in the image of $\tSL_2\bC$ and 
\begin{equation}\label{E:phi}
  \varphi \ := \ \varrho(F^\sb) \ \in \ \sH \ \subset \ D \,.
\end{equation}
Taken with \eqref{E:halfY}, this implies 
\begin{equation} \label{E:3}
\begin{array}{l}
  \hbox{the grading element \eqref{E:ttE} associated with the} \\
  \hbox{stabilizer $\tAd_\varrho(\fp)$ of $\varphi$ is 
  $\half \tAd_\varrho(Y) = \half \sZ$.}
\end{array}
\end{equation}  
Then the hypothesis that the infinitesimal period relation is bracket--generating implies 
\begin{equation} \label{E:5}
  \half\sZ \ = \ \ttE_\varphi\,,
\end{equation}
where the latter is the grading element \eqref{E:Ephi} associated with $\varphi$, \cf~\S\ref{S:HS-GE}.  Therefore, by \eqref{SE:kCkperpC}, Lemma \ref{L:CD} and \eqref{E:p-pp}, the $\half \sZ$--graded decomposition \eqref{SE:grading} of $\fg_\bC$ must satisfy
\begin{equation}\label{E:4}
  \fg^\teven \ = \ \fk_\bC \tand 
  \fg^\todd \ = \ \fk_\bC^\perp
\end{equation}
where $\fg_\bR = \fk_\bR \op \fk_\bR^\perp$ is the Cartan decomposition given by the Cartan involution $\varphi(\bi)$.  Observe that
\[
  \overline\sE \, \in \, \fg^1 \,\subset\, \fk^\perp_\bC\,,\quad
  \sZ \, \in \, \fg^0 \,\subset\,\fk_\bC \tand
  \sE \, \in \, \fg^{-1} \,\subset \,\fk^\perp_\bC\,.
\]
Since the Cartan involution acts on $\fg^1 \op\fg^{-1} = \fg^{1,-1}\op\fg^{-1,1}$ by the scalar $-1$, and on $\fg^0 = \fg^{0,0}$ by the scalar $1$, we see that $\{ N^+,Y,N\}$ is a Cayley triple (with respect to $\fk$); equivalently, $\{\overline\sE,\sZ,\sE\}$ is a DKS--triple.  Equation \eqref{E:4} implies that the compact characteristic vector $(\gamma(\sZ);\a'(\sZ))$ of the orbit $\cN$ satisfies Theorem \ref{T:cHT}(b), establishing necessity.

\smallskip

(Uniqueness)  At this point we may observe that if the $\tAd(G_\bR)$--orbit $\cN$ exists, then it is unique: the compact characteristic vector $(\gamma(\cN);\a'(\cN))$ is uniquely determined by \eqref{E:5} and \eqref{E:4}.  Uniqueness of the orbit $\cN$ then follows from Theorem \ref{T:ccv}. 

\smallskip

$(\Longleftarrow)$  Assume that conditions (a) and (b) hold.  Fix a Cartan decomposition $\fg_\bR = \fk_\bR \op \fk^\perp_\bR$ and a Cayley triple $\{N^+,Y,N\}$ (\S\ref{S:rtno_R}).  Set $F^p_\fg = W_{-2p}(N^+,\fg_\bC)$.  The expression \eqref{E:JMf} implies that $Y,N^+ \in W_0(N^+,\fg_\bR)$.  Therefore, $Y$ and $N^+$ stabilize $F^\sb_\fg$.  Given the hypothesis (a), this implies that the $\tSL_2\bC$--orbit of $F^\sb_\fg$ is a holomorphic, equivariant, horizontal embedding $\bP^1 \inj G_\bC/P$.  Arguing as above, the conditions of Theorem \ref{T:cHT}(b) imply \eqref{E:4}; equivalently, $\varphi = \varrho(F^\sb_\fg) \in D \cap \bP^1$.  This implies $D \cap \bP^1 = \sH$ and $z \mapsto \texp(zN)F^\sb$ is a nilpotent orbit.  Then Theorem \ref{T:cks}(a) ensures that $(F^\sb,N)$ is a polarized mixed Hodge structure.  Finally, from $F^p_\fg = W_{-2p}(N^+,\fg_\bC)$, and the fact that $Y$ is even, we see that $\ttE = \half Y$ splits $F^\sb_\fg$, while $Y$ splits $W_\sb(N,\fg)$; it follows that $(F^\sb,N)$ is Hodge--Tate.

\end{proof}

It will be helpful later for us to observe that 
\begin{equation}\label{E:Phi-im}
  \Phi_\infty(F^\sb_\fg,N) \ = \ \varrho(\varphi) 
  \ = \ \varrho^2(F^\sb_\fg) \,.
\end{equation}
The second equality is \eqref{E:phi}.  To see why the first equality holds, set $F^\sb_{\fg,\infty} = \Phi_\infty(F^\sb_\fg,N)$ and observe that \eqref{E:Finfty} implies 
\[
  F^p_{\fg,\infty} \ = \ \bigoplus_{s\ge p} I^{\sb,-s} 
  \ = \ \bigoplus_{s \ge p} I^{-s,-s} \,.
\]
At the same time
\[
  F^p_\fg \ = \ \bigoplus_{s\ge p} I^{s,s} \,.
\]
The assertion now follows from \eqref{E:halfY} and the easily verified 
\begin{equation} \nonumber 
  \tAd^2_\varrho Y \ = \ -Y \,.
\end{equation}

\begin{remark}[Cayley triples and Matsuki points] \label{R:CT-MP}
As we observed in the proof of $(\Longrightarrow)$ above, the standard triple $\{N^+,Y,N\}$ of Theorem \ref{T:cHT} is a Cayley triple with respect to the Cartan involution $\varphi(\bi)$ defined by \eqref{E:phi}, \cf~Lemma \ref{L:CD}.   This implies that \eqref{E:phi} is a Matsuki point (with respect to the Cartan involution $\varphi(\bi)$).
\end{remark}

\subsection{Distinguished grading elements} \label{S:dp}
It may be the case that a Hodge--Tate degeneration $(F^\sb_\fg,N)$ on $D$ is itself induced from a Hodge--Tate degeneration $(\sF^\sb_\fl,N)$ on a Mumford--Tate subdomain $\sD \subset D$.  More precisely, let $\varphi$ be the circle \eqref{E:phi} and suppose that $\fl_\bR \subset \fg_\bR$ is a $\varphi$--stable Levi subalgebra containing $N$.  Then $\varphi(S^1) \subset L_\bR$ by Lemma \ref{L:levi1}.  In this case, setting $\sF^\sb_\fl = F^\sb_\fg \cap \fl_\bC$ defines a Hodge--Tate degeneration $(\sF^\sb_\fl , N)$ on $\sD = L_\bR \cdot \varphi$.  (Here $\sD \subset D$ is the Mumford--Tate domain for the Hodge representation $(\fl_\bR,Q_\fl , \tAd,\varphi)$ of $L_\bR$, \cf~Remark \ref{R:HSonS}.)  A simple test of the neutral element $Y$ will determine whether or not $\fg_\bR$ is the minimal such Levi subalgebra (that is, whether or not there exists $\fl_\bR \subsetneq \fg_\bR$), \cf~Lemma \ref{L:dist}.

A grading element $Y \in \fg_\bC$ is \emph{distinguished} if $\half Y$ is the grading element \eqref{E:ttE} associated with the parabolic $\fp_Y$ and the $Y$--eigenspace decomposition $\fg_\bC = \op \fg_\ell$ satisfies $\tdim\,\fg_0 = \tdim\,\fg_2$.


\begin{theorem}[{Bala--Carter \cite{MR0417306}}] \label{T:BC}
A grading element $Y \in \fg_\bC$ is distinguished if and only if it can be realized as the neutral element of a standard triple $\{N^+,Y,N\}$ with the property that no proper Levi subalgebra $\fl_\bC \subsetneq \fg_\bC$ contains the standard triple.  \emph{(Equivalently, no proper Levi subalgebra contains $N$.)}
\end{theorem}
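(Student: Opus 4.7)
The plan is to reduce the equivalence to the classical Bala--Carter analysis of distinguished nilpotents via Jacobson--Morosov theory. As setup, I would decompose $\fg_\bC$ into irreducible $\fsl_2\bC$--submodules under the adjoint action of the standard triple $\{N^+,Y,N\}$. Writing $V(k)$ for the irreducible of highest weight $k$, each summand contributes a one-dimensional piece to the $Y$--eigenspace $\fg_\ell$ for every $\ell$ with $|\ell|\le k$ and $\ell\equiv k\pmod{2}$. Two bookkeeping observations follow: (i) $Y$ is even (equivalently $\fg_\ell=0$ for all odd $\ell$) if and only if no $V(k)$ with odd $k$ appears; and (ii) $\tdim\,\fg_0-\tdim\,\fg_2$ equals the multiplicity of $V(0)$, which is the dimension of the $\fsl_2$--invariants $\fz_\fg(\{N^+,Y,N\})$ and therefore computes the reductive part of $\fz_\fg(N)$ modulo $\fz(\fg_\bC)$.

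For the direction from triple-minimality to distinguishedness of $Y$, I would argue that if the triple lies in no proper Levi then the reductive part of $\fz_\fg(N)$ coincides with $\fz(\fg_\bC)$; otherwise the Bala--Carter construction of the minimal Levi containing a nilpotent would produce a proper Levi $\fl_\bC\subsetneq\fg_\bC$ still containing the triple, a contradiction. Taking $\fg_\bC$ semisimple, this forces $\fz_\fg(\{N^+,Y,N\})=0$ and hence $\tdim\,\fg_0=\tdim\,\fg_2$. The complementary Bala--Carter conclusion is evenness of $Y$: an odd highest-weight $\fsl_2$--summand $V(2k+1)\subset\fg_\bC$ would again produce a proper reductive subalgebra in $\fz_\fg(N)$, and thereby a proper Levi containing $N$. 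Combined with the verification that $\s_i(Y)\in\{0,2\}$ (so that $\half Y$ coincides with the canonical grading element $\ttE_{\fp_Y}$ of \eqref{E:ttE}), this gives the paper's notion of distinguishedness for $Y$.

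Conversely, from a distinguished $Y$ the triple would be constructed via Vinberg's theory of graded semisimple Lie algebras. Since $\half Y=\ttE_{\fp_Y}$, the values $\s_i(Y)\in\{0,2\}$ give $\fg_\todd=0$, and $\fg_0$ is a Levi subalgebra. The connected subgroup $G_0\subset G_\bC$ with Lie algebra $\fg_0$ acts on $\fg_2$ via the adjoint representation; the equality $\tdim\,\fg_0=\tdim\,\fg_2$ together with reductivity of $\fg_0$ forces this action to admit an open orbit by Vinberg. Pick $N^+$ in this orbit: then $\tad(N^+):\fg_0\to\fg_2$ is surjective, hence bijective by dimension count, so Killing-form duality gives $\tad(N^+):\fg_{-2}\to\fg_0$ injective, and Jacobson--Morosov supplies the unique $N\in\fg_{-2}$ completing $\{N^+,Y,N\}$ to a standard triple. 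To rule out a proper Levi $\fl_\bC\subsetneq\fg_\bC$ containing the triple, note that such an $\fl_\bC$ would inherit the $Y$--grading and $N^+$ would sit inside $\fl_\bC\cap\fg_2$, a proper $G_0$--invariant subspace, contradicting density of the $G_0$--orbit of $N^+$ in $\fg_2$.

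The main obstacle is the evenness half of the $(\Leftarrow)$ direction: extracting an explicit proper Levi containing the triple from the presence of a single odd $Y$--eigenvalue. This step requires the finer Bala--Carter machinery linking reductive centralizers of nilpotents to minimal containing Levi subalgebras, and is not accessible from $\fsl_2$--representation theory alone.
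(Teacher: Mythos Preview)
The paper does not give its own proof of this statement: Theorem~\ref{T:BC} is stated with attribution to Bala--Carter and simply cited to \cite{MR0417306}, with no argument supplied. So there is nothing in the paper to compare your sketch against.

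That said, your outline is broadly in the spirit of the Bala--Carter argument, and you correctly identify the key bookkeeping identity $\tdim\,\fg_0 - \tdim\,\fg_2 = \tdim\,\fz_\fg(\{N^+,Y,N\})$ together with the fact that this centralizer is the reductive part of $\fz_\fg(N)$. Two points deserve tightening. First, in the converse direction, the existence of an open $G_0$--orbit on $\fg_2$ does not follow from the dimension equality $\tdim\,\fg_0 = \tdim\,\fg_2$ and reductivity alone; one needs the finiteness of $G_0$--orbits on $\fg_2$ (Vinberg, or the link to nilpotent orbits in $\fg_\bC$), after which the dimension equality forces the generic stabilizer to be finite. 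Second, your last step ruling out a proper Levi $\fl_\bC$ containing the triple is not quite right as stated: $\fl_\bC \cap \fg_2$ need not be $G_0$--invariant, since $G_0$ has no reason to preserve $\fl_\bC$. The cleaner argument is that any proper Levi containing the triple has nontrivial center, and that center lies in $\fz_\fg(\{N^+,Y,N\})$, contradicting the vanishing of the latter established from the open-orbit condition. You are right that the evenness assertion (no odd $V(k)$ when $N$ lies in no proper Levi) is the genuinely delicate part and requires the full Bala--Carter machinery.
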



\begin{lemma}\label{L:dist}
Given a Hodge--Tate degeneration $(F^\sb_\fg,N)$ on $D$, let $\{N^+,Y,N\}$ be the standard triple of Theorem \ref{T:cHT}, and let $\varphi$ be given by \eqref{E:phi}.  The neutral element $Y$ is distinguished if and only if $\fg_\bR$ is the only $\varphi$--stable Levi subalgebra of $\fg_\bR$ containing $N^+$ (equivalently, $N$).
\end{lemma}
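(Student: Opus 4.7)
The plan is to reduce the lemma to Bala--Carter's characterization (Theorem \ref{T:BC}): $Y$ is distinguished iff no proper Levi of $\fg_\bC$ contains the standard triple $\{N^+,Y,N\}$. The bridge between ``$\fl_\bR$ contains $N^+$'' and ``$\fl_\bC$ contains the entire triple'' is supplied by Lemma \ref{L:levi1} together with the Cayley transform identities $\sZ = \bi(N-N^+)$ and $\ttE_\varphi = \tfrac{1}{2}\sZ$; the second was established in the proof of Theorem \ref{T:cHT} at \eqref{E:5}.

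For the direction ($\Longrightarrow$), suppose $Y$ is distinguished and let $\fl_\bR$ be a $\varphi$-stable Levi subalgebra containing $N^+$. By Lemma \ref{L:levi1}, $\ttE_\varphi = \tfrac{1}{2}\sZ \in \fl_\bC$, so $\sZ \in \fl_\bC$; the identity $\sZ = \bi(N - N^+)$ then forces $N \in \fl_\bC$, after which $Y = [N^+, N] \in \fl_\bC$. The full triple thus lies in $\fl_\bC$, and Theorem \ref{T:BC} gives $\fl_\bC = \fg_\bC$, hence $\fl_\bR = \fg_\bR$. The parenthetical equivalence (``$N^+$'' versus ``$N$'') follows from exactly the same calculation, run symmetrically.

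For the direction ($\Longleftarrow$) I would argue by contrapositive. Assume $Y$ is not distinguished. By Bala--Carter combined with Kostant's theorem that $Z_{G_\bC}(\{N^+,Y,N\})$ is reductive, this centralizer has positive dimension. Since the triple is real, $Z_{G_\bC}(\{N^+,Y,N\})$ is defined over $\bR$, so its real form $Z_{G_\bR}(\{N^+,Y,N\})$ is a positive-dimensional real reductive group and therefore admits a nontrivial (possibly compact) real torus $T$. Set $\fl_\bC = C_{\fg_\bC}(T_\bC)$: this is a Levi subalgebra, defined over $\bR$ since $T$ is, and proper because $\fg_\bC$ is semisimple so $T_\bC$ is a positive-dimensional noncentral torus. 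By construction $T$ centralizes $N^+$, $N$ and $Y$, so all three lie in $\fl_\bC$.

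It remains to verify that $\fl_\bR = \fl_\bC \cap \fg_\bR$ is $\varphi$-stable, which by Lemma \ref{L:levi1} reduces to checking $\ttE_\varphi = \tfrac{1}{2}\sZ \in \fl_\bC$, i.e., that $T$ centralizes $\sZ$. But $\varrho = \exp(\bi\tfrac{\pi}{4}(N^+ + N))$ is built from elements of the triple, so $T$ commutes with $\varrho$, and consequently
\[
  \tAd_t \sZ \;=\; \tAd_t \tAd_\varrho Y \;=\; \tAd_\varrho \tAd_t Y \;=\; \tAd_\varrho Y \;=\; \sZ
\]
for every $t \in T$. Hence $\fl_\bR$ is a proper $\varphi$-stable Levi of $\fg_\bR$ containing $N^+$, finishing the contrapositive.

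The main obstacle is the construction in the ($\Longleftarrow$) direction: passing from the complex non-distinguishedness of $Y$ to a \emph{$\varphi$-stable} (i.e., both $\bR$-defined and $\ttE_\varphi$-containing) Levi in $\fg_\bR$. The key idea is to locate the defining torus $T$ inside the centralizer of the \emph{entire} real triple rather than of $N^+$ alone; this real torus automatically commutes with the Cayley element $\varrho$, which propagates centralization from $Y$ to $\sZ$ and hence to $\ttE_\varphi$.
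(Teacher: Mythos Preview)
Your proof is correct, and the $(\Longrightarrow)$ direction is essentially the paper's argument, only with the details spelled out (the paper simply invokes Theorem~\ref{T:BC} without recording why a $\varphi$--stable Levi containing $N^+$ must contain the whole triple; your use of $\sZ=\bi(N-N^+)$ together with $\ttE_\varphi=\tfrac12\sZ$ is exactly the bridge needed).

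For $(\Longleftarrow)$ the two arguments are genuinely different. The paper passes to the DKS side via Remark~\ref{R:dist}, observes that the minimality hypothesis makes $\overline\sE$ a \emph{noticed} nilpotent in No\"el's sense, and then quotes \cite[Lemma~2.1.1]{\Noel} to obtain the dimension equality $\dim(\fg_0\cap\fk_\bC)=\dim(\fg_2\cap\fk_\bC^\perp)$; the Hodge--Tate identities \eqref{E:5} and \eqref{E:4} force $\fg_0\subset\fk_\bC$ and $\fg_2\subset\fk_\bC^\perp$, so this becomes $\dim\fg_0=\dim\fg_2$ and $\sZ$ (hence $Y$) is distinguished. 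Your contrapositive instead constructs the offending Levi directly: a nonzero semisimple element of the real reductive centralizer $Z_{\fg_\bR}(N^+,Y,N)$ gives a proper, $\bR$--defined Levi $\fl_\bC$ containing the triple, and the key observation that such an element automatically commutes with $\varrho$ (being built from $N^++N$) pushes $Y\in\fl_\bC$ to $\sZ\in\fl_\bC$, hence to $\ttE_\varphi\in\fl_\bC$. Your route is more self--contained (no appeal to No\"el's paper) and makes the r\^ole of the Cayley transform explicit; the paper's route is shorter once one is willing to import the noticed--nilpotent machinery, and has the virtue of placing the lemma in that broader context. One small point of phrasing: rather than a ``torus $T$'' it suffices (and is cleaner) to take a single nonzero semisimple $s\in Z_{\fg_\bR}(N^+,Y,N)$ and set $\fl_\bC=C_{\fg_\bC}(s)$; the existence of such an $s$ follows immediately from the reductivity and positive dimension of the centralizer.
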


\begin{remark}
The hypothesis that $(F^\sb,N)$ is a Hodge--Tate degeneration on a Mumford--Tate domain is essential: there exist nilpotent $N \in \fg_\bR$ with the property that $\fg_\bR$ is the minimal $\varphi$--stable Levi subalgebra of $\fg_\bR$ containing $N$, but for which $Y$ is not even, let alone distinguished.  Such nilpotents are \emph{noticed} \cite{\Noel}.
\end{remark}

\begin{remark} \label{R:dist}
Let $\{\overline\sE,\sZ,\sE\} = \tAd_\varrho\{N^+,Y,N\}$ 
be the DKS--triple in the proof of Theorem \ref{T:cHT}.  Note that $Y$ is distinguished if and only if $\sZ$ is.  Moreover, $\fg_\bR$ is the minimal $\varphi$--stable Levi subalgebra of $\fg_\bR$ containing $N^+$ if and only if $\fg_\bC$ is the minimal conjugation and $\varphi$--stable Levi subalgebra containing $\overline\sE$.
\end{remark}

\begin{proof}
$(\Longrightarrow)$ If $Y$ is distinguished, then $\fg_\bC$ is the smallest Levi subalgebra containing $N^+$ by Theorem \ref{T:BC}.  \smallskip

$(\Longleftarrow)$ By Lemma \ref{L:levi1} and \eqref{E:5} a Levi subalgebra of $\fg_\bC$ is $\varphi$--stable if and only if it contains $\sZ$.  Suppose that $\fg_\bC$ is the only Levi subalgebra of $\fg$ that: (i) contains the DKS--triple $\{\overline\sE,\sZ,\sE\}$, and (ii) can be expressed as the centralizer of an element in $\bi \fk_\bR$.  Any such Levi subalgebra of $\fg_\bC$ is both conjugation and $\varphi(\bi)$--stable.  Then $\overline\sE$ is a \emph{noticed} nilpotent, in the terminology of \cite{\Noel}.  Whence \cite[Lemma 2.1.1]{\Noel} yields $\tdim\,\fg_0 \,\cap\,\fk_\bC \, = \, \tdim\,\fg_2\,\cap\,\fk_\bC^\perp$, where $\fg_\bC = \op \fg_\ell$ is the $\sZ$--eigenspace decomposition.  From \eqref{E:5} and \eqref{E:4}, we see that $\tdim\,\fg_0 = \tdim\,\fg_2$, and $\sZ$ is distinguished by definition.  The lemma now follows from Remark \ref{R:dist}.
\end{proof}

\subsection{Examples} \label{S:egHT}

In the following examples, given $G_\bC$, we apply Theorem \ref{T:cHT} to identify the compact duals $\check D = G_\bC/P$ with an open $G_\bR$--orbit admitting the structure of a Mumford--Tate domain with a Hodge--Tate degeneration.  Keep in mind that, since we are assuming that the infinitesimal period relation is bracket--generating, the compact dual determines the real form, \cf\eqref{E:G_R}.

\begin{example}[The symplectic group $\tSp_8\bC$] \label{eg:HT-Sp8}

Of the $2^4-1 = 15$ conjugacy classes of parabolic subgroups in $G_\bC$, only six are even Jacobson--Morosov; the indexing sets (\S\ref{S:P}) are $I = \{1,2,3,4\} \,,\ \{1,2,4\}\,,\ \{2,4\} \,,\ \{1,4\} \,,\ \{2\} \,,\ \{4\}$, \cf\cite{\CoMc} or \cite{BPR}.  Therefore, the pairs of compact duals $\check D$ with an open $G_\bR$--orbit $D$ admitting the structure of a Mumford--Tate domain with a Hodge--Tate degeneration are as listed in the table below.
\begin{small}
\[ \renewcommand{\arraystretch}{1.5}
\begin{array}{l|cccccc}
\check D & \tFlag^Q_{1,2,3,4}(\bC^8) &  \tFlag^Q_{1,2,4}(\bC^8) 
  & \tFlag^Q_{2,4}(\bC^8) & \tFlag^Q_{1,4}(\bC^8) & \tGr^Q(2,\bC^8) 
  & \tGr^Q(4,\bC^8) \\
\hline
\fg_\bR & \fsp(4,\bR) & \fsp(4,\bR) &  \fsp(4,\bR) & \fsp(4,\bR) & \fsp(2,2) 
        & \fsp(4,\bR) \\
\hline 
V_\bR & \bR^8 & \bR^8 & \bR^8 & \bR^8 & \tw^2\bR^8 & \bR^8  \\
\hline
\bh & (1,\ldots,1) & (1,1,2,2,1,1) & (2,2,2,2) & (1,3,3,1) & (1,8,9,8,1) 
    & (4,4) \\
\end{array}
\]
\end{small}

\noindent
The table also lists a Hodge representation $V_\bR$ realizing $D$ as a Mumford--Tate domain, and the corresponding Hodge numbers.  In all but one of these cases we have $\fg_\bR = \fsp(4,\bC)$ and $V_\bR = \bR^8$; this realizes the Mumford--Tate domain as a period domain.  In the case that $\check D = \tGr^Q(2,\bC^8)$, the standard representation $\bR^8$ does not admit the structure of a Hodge representation (because $\bC^8$ quaternionic, rather than real, with respect to the real form $\fg_\bR$). However, the second exterior power $\tw^2\bR^8$ does admit the structure of Hodge representation that realizes $\tGr^Q(2,\bC^8)$ as the compact dual of a Mumford--Tate domain.
\end{example}

\begin{example}[The orthogonal group $\tSO_9\bC$] \label{eg:HT-SO9}
Of the $2^4-1 = 15$ conjugacy classes of parabolic subgroups in $G_\bC$, only five are even Jacobson--Morosov; the indexing sets (\S\ref{S:P}) are $I = \{1,2,3,4\} \,,\ \{1,2,3\}\,,\ \{1,3\} \,,\ \{3\} \,,\ \{1\}$.  It follows that the pairs of compact duals $\check D$ with an open $G_\bR$--orbit $D$ admitting the structure of a Mumford--Tate domain with a Hodge--Tate degeneration are
\begin{small}
\[ \renewcommand{\arraystretch}{1.5}
\begin{array}{l|ccccc}
\check D & \tFlag^Q_{1,2,3,4}(\bC^8) &  \tFlag^Q_{1,2,3}(\bC^8) 
  & \tFlag^Q_{1,3}(\bC^8) & \tGr^Q(3,\bC^8)
  & \tGr^Q(1,\bC^8) = \cQ^6  \\
\hline
\fg_\bR & \fso(4,5) & \fso(4,5) & \fso(4,5) & \fso(6,3) & \fso(2,7) \\
\end{array}
\]
\end{small}

\noindent Here we may take $V_\bR = \bR^9$ in each case, and the Mumford--Tate domains are all period domains.
\end{example}

\begin{example}[The exceptional Lie group $G_2(\bC)$] \label{eg:HT-G2}

The complex Lie group $G_\bC = G_2(\bC)$ contains three conjugacy classes $\cP_I$ of parabolic subgroups; as discussed in \S\ref{S:P}, they are indexed by the nonempty subsets $I \subset \{1,2\}$.  Parabolics in two of the three may be realized as even Jacobson--Morosov parabolics: the Borel subgroups $\cB = \cP_{\{1,2\}}$ and the maximal parabolics $\cP_{2}$, \cf\cite[\S8.4]{\CoMc}.  (The parabolics in the third class $\cP_1$ may also be realized as Jacobson--Morosov parabolics, but not as \emph{even} Jacobson--Morosov parabolics.)   The complex Lie algebra $\fg_\bC$ admits a single noncompact real form $\fg_\bR$.  The maximal compact subalgebra is $\fk_\bR = \fsu(2) \op \fsu(2)$.  In both cases we may take $V_\bR$ to be the standard representation $\bR^7$.
\begin{a_list}
\item 
In the case of the Borel conjugacy class $\cB$, as discussed in \S\ref{S:rtno_C}, we have $\s(\cN) = (2,2)$.  From the tables of \cite[\S9.6]{\CoMc} we see that $\cN \cap \fg_\bR$ consists of a single $\tAd(G_\bR)$--orbit $\cN$ and $\gamma(\cN) = (4,8)$ and $\a'(\cN) = \s_2(\cN) = -10$.  It follows from Theorem \ref{T:cHT} that $D \subset G_\bC/B$ admits a Hodge--Tate degeneration.  The Hodge numbers are $\bh=(1,1,1,1,1,1,1)$.
\item
For $P \in \cP_2$ we have $\s(\cN) = (0,2)$.  From the tables of \cite[\S9.6]{\CoMc} we see that $\cN \cap \fg_\bR$ consists of two $\tAd(G_\bR)$--orbits.  One of these has characteristic vector $\gamma(\cN) = (0,4)$ and $\a'(\cN) = \s_2(\cN) = -2$.  Theorem \ref{T:cHT} implies $D \subset G_\bC/P$ has a Hodge--Tate degeneration.  The Hodge numbers are $\bh=(2,3,2)$.
\end{a_list}
More generally, the polarized $G_\bR$--orbits in a $G_2(\bC)$--homogeneous compact dual have been determined by Kerr and Pearlstein in \cite[\S6.1.3]{KP2013}.
\end{example}

\subsection{Constraints on the existence of Hodge--Tate degenerations}

In the case that the compact dual is the full flag variety $\check D = G_\bC/B$, that is $P = B$ is a Borel subgroup, we may be explicit about the real forms $G_\bR$ that yield a $G_\bR$--orbit $D \subset \check D$ admitting the structure of a Mumford--Tate domain with a Hodge--Tate degeneration.

\begin{proposition} \label{P:borel}
Let $G_\bC$ be a simple complex Lie group and consider the full flag variety $\check D = G_\bC/B$.  Given a real form $G_\bR$ of $G_\bC$ there exists a $G_\bR$--flag domain $D \subset \check D$ admitting the structure of a Mumford--Tate domain (with bracket--generating IPR) with a Hodge--Tate degeneration if and only if $\fg_\bR$ is one of the following:
\begin{eqnarray*}
  &
  \fsu(p,p) \,,\quad \fsu(p,p\pm1) \,,\quad
  \fsp(n,\bR) \,,
  & \\ &
  \fso(2p\pm1,2p)\,,\quad \fso(2p,2p) \,,\quad \fso(2p+2,2p)\,,
  & \\ &
  \mathrm{E\,II}\,,\quad\mathrm{E\,V}\,,\quad\mathrm{E\,VIII}\,,\quad
  \mathrm{F\,I}\,,\quad
  \mathrm{G}\,.
  &
\end{eqnarray*}
\end{proposition}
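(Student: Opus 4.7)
Since $\fp = \fb$, equation \eqref{E:BEP} identifies $B$ with the even Jacobson--Morosov parabolic $W_0(N^+,\fg_\bC)$ of the principal nilpotent orbit $\cN_\tprin \subset \fg_\bC$, whose standard triple has neutral element $Y = 2\sum_{i=1}^r \ttS^i$. Condition \emph{(a)} of Theorem \ref{T:cHT} is therefore automatic, and the proposition reduces to the following statement: \emph{$G_\bC/B$ admits a Hodge--Tate $G_\bR$--orbit if and only if some $N \in \cN_\tprin \cap \fg_\bR$ has DKS neutral element $\sZ$ satisfying $\gamma_i(\sZ) \equiv 0 \pmod{4}$ for all $\gamma_i \in \sS_\fk$, and $\a'(\sZ) \equiv 2 \pmod 4$.}

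Two necessary conditions on $\fg_\bR$ are immediate: \emph{(i)} $\trank\,\fk_\bC = \trank\,\fg_\bC$, which is required in order for $\fg_\bR$ to arise as a Mumford--Tate group (\cite{\GGK} and the discussion preceding \S\ref{S:cpt_rts}); and \emph{(ii)} $\cN_\tprin \cap \fg_\bR \neq \emptyset$, which is the standard definition of $\fg_\bR$ being \emph{quasi-split}. I would enumerate the equal-rank, quasi-split real forms by inspection of Vogan diagrams (\cite[Ch.~VI]{\Knapp}); a direct check produces precisely the list in the statement---$\fsu(p,p), \fsu(p,p{\pm}1)$ for type $A$; $\fsp(n,\bR)$ for type $C$; $\fso(2p{\pm}1,2p)$ for type $B$; $\fso(2p,2p)$ and $\fso(2p{+}2,2p)$ for type $D$; and $\mathrm{E\,II}, \mathrm{E\,V}, \mathrm{E\,VIII}, \mathrm{F\,I}, \mathrm{G}$ for the exceptional types.

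To finish, one must verify that each of these candidates actually satisfies the parity conditions of Theorem \ref{T:cHT}(b). For the classical types I would write down the principal nilpotent as a single Jordan block in the standard realization of $\fg_\bR$, form the Cayley triple $\{N^+, Y, N\}$, compute $\sZ = \bi(N - N^+)$ via \eqref{E:DKStrp}, and pair it against the compact simple roots $\sS_\fk$ and the noncompact simple root $\a'$ determined by the Vogan diagram; the signature constraints enforced by equal rank together with quasi-split then force $\gamma_i(\sZ) \in 4\bZ$ and $\a'(\sZ)/2$ odd. For the exceptional candidates the compact characteristic vectors of the principal real orbits are tabulated in \cite[\S9.6]{\CoMc}, so one simply reads off the values to confirm the parity conditions.

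The main obstacle is this final verification, which is inherently case-by-case: one must check both that every listed candidate succeeds \emph{and} confirm that conditions (i)--(ii) are already sufficient, so that no candidate is inadvertently killed by the parity test. A more conceptual route would be a single uniform lemma: \emph{whenever $\fg_\bR$ is equal-rank and quasi-split, the principal real orbit automatically satisfies the parity conditions of Theorem \ref{T:cHT}(b)}. Such a lemma should follow from the near-uniqueness (up to $K_\bR$-conjugacy) of principal Cayley triples together with the explicit formula $Y = 2\sum_i \ttS^i$ and the Vogan-diagram description of $\sS_\fk$ and $\a'$; however, packaging this uniformly across all Dynkin types, in a way that avoids falling back on the classical tables, is the principal technical difficulty.
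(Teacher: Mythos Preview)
Your proposal is correct and takes a somewhat different organizational route from the paper. Your key observation---that condition (a) of Theorem~\ref{T:cHT} forces $N\in\cN_\tprin$, so that a necessary condition is $\cN_\tprin\cap\fg_\bR\neq\emptyset$, which together with equal rank pins down the list exactly---is cleaner than what the paper makes explicit. (One terminological quibble: $\cN_\tprin\cap\fg_\bR\neq\emptyset$ is a \emph{characterization} of quasi-split rather than the standard definition; the usual definition is possession of an $\bR$-Borel, and the equivalence runs through the fact that a regular nilpotent lies in a unique Borel.) The paper instead outsources necessity for the classical cases to \cite{GGR}, gives an explicit root-combinatorial argument only for $\fsl_n$---showing that all simple roots must be noncompact, writing down the compact simple roots $\{\s_i+\s_{i+1}\}$ and the noncompact $-\s_1$ directly, and verifying Theorem~\ref{T:cHT}(b) by inspection---and handles the exceptional cases exactly as you propose, via the tables in \cite[\S9.6]{\CoMc}. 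Your approach buys a uniform necessary condition and a tidy enumeration; the paper's explicit $\fsl_n$ argument buys a concrete template for the ``uniform lemma'' you conjecture, since it exhibits the mechanism by which the parity conditions follow from the Vogan data rather than deferring to a computation. Both approaches ultimately rest on case-checking for sufficiency.
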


\begin{proof}
Hodge--Tate degenerations in full flag varieties are discussed in \cite[Remark V.12]{GGR}.  There it was observed that, if $G_\bC$ is classical (special linear, symplectic or orthogonal), then $\fg_\bR$ is necessarily one of the algebras listed above.  Additionally, for each of the symplectic and orthogonal algebras, a Mumford--Tate domain and Hodge--Tate degeneration are exhibited.

Now consider the special linear algebra $\fg_\bC = \fsl_n\bC$.  If the Mumford--Tate domain admits a Hodge--Tate degeneration, then the complex characteristic vector $\s(\cN)$ is necessarily of the form $(2,\ldots,2)$.  Moreover, \eqref{E:halfY} implies $(1,\ldots,1) = (\s_1(\ttE),\ldots,\s_r(\ttE))$, where $r = n-1$ and $\ttE = \ttE_\varphi$ is the grading element \eqref{E:ttE} associated with the Borel.  Therefore the simple roots $\s_i$ are all noncompact.  Whence the collection $\sS' = \{ \s_1 + \s_2 \,,\, \s_2 + \s_3 \,,\, \s_3+\s_4\,,\ldots,\, \s_{r-1}+\s_r\}$ forms a set of simple roots for $\fk_\bC$.  Attaching the non-compact $-\s_1$ completes $\sS'$ to a set of simple roots for $\fg_\bC$.  From this choice of simple roots we see that Theorem \ref{T:cHT}(b) holds; whence $D$ admits a Hodge--Tate degeneration.  To see that the real form is either $\fsu(p,p)$ or $\fsu(p\pm1,p)$ observe that $-\s_1$ is the unique noncompact simple root in the system $\sS' \cup \{-\s_1\}$.  In the Vogan diagram classification of real forms \cite[\S VI.10]{\Knapp}, this corresponds to painting either the $(p\pm1)$--st or $p$--th node in the Dynkin diagram.

In the case that $G_\bC$ is exceptional, the proposition follows from Theorem \ref{T:cHT} and the tables in \cite[\S9.6]{\CoMc}. 
\end{proof}

\section{Classification theorems} \label{S:CTs}

In this section we prove the two main results of the paper: the classifications of the $\bR$--split polarized mixed Hodge structures (Theorem \ref{T:cPMHS}), and of the horizontal $\tSL(2)$s (Theorem \ref{T:cSL2}).

\subsection{$\bR$--split polarized mixed Hodge structures} 

Let $(F^\sb,N)$ be an $\bR$--split polarized mixed Hodge structure on a Mumford--Tate domain $D$.  Given any $g \in G_\bR$, 
\[
  g \cdot (F^\sb,N) \ := \ ( g \cdot F^\sb , \tAd_gN )
\]
is also an $\bR$--split PMHS on $D$; let 
\[
  [F^\sb,N] \ := \ \{ g \cdot (F^\sb,N) \ | \ g \in G_\bR\}
\]
denote the corresponding $G_\bR$--conjugacy class, and let 
\begin{equation}\label{E:PsiD}
  \Psi_D \ := \ 
  \{ [F^\sb,N] \ | \ (F^\sb,N) \hbox{ is an $\bR$--split PMHS on } D \}
\end{equation}
denote the set of all such conjugacy classes.

Fix a point $\varphi \in D$.  Recall the grading element $\ttE_\varphi$ of \eqref{E:Ephi} and let $\ft \ni \bi\,\ttE_\varphi$ be a compact Cartan subalgebra of $\fg_\bR$.  Given a Levi subalgebra $\fl_\bC \supset \fh$, recall from \S\ref{S:GE} that $\fl_\bC = \fl_\bC^\tss \op \fz$ where $\fz$ is the center of $\fl_\bC$ and $\fl_\bC^\tss = [\fl_\bC , \fl_\bC]$ is semisimple; let $\pi_\fl^\tss : \fl_\bC \to \fl_\bC^\tss$ denote the projection.  Set 
\[
  \cL_{\varphi,\ft} \ := \ 
  \left\{ \begin{array}{c}
  \hbox{$\varphi$--stable Levi subalgebras $\fl_\bR \subset \fg_\bR$ such
        that $\ft \subset \fl_\bR$ and}\\
  \hbox{$2\,\pi^\tss_\fl(\ttE_\varphi)$ is a distinguished 
        semisimple element of $\fl_\bC^\tss$}
  \end{array}\right\} \,.
\]
(The condition, in the definition of $\cL_{\varphi,\ft}$, that $\fl_\bR$ be $\varphi$--stable is added for emphasis/clarity; it follows from $\bi\ttE_\varphi \in \ft \subset \fl_\bR$ which implies that the image of the circle is contained in $L_\bR$.)  In computations it is helpful to note that $2\,\pi^\tss_\fl(\ttE_\varphi)$ is a distinguished semisimple element of $\fl_\bC$  if and only if 
\begin{equation} \label{E:distL}
  \trank\,\fl_\bC^\tss \ +\  
  \#\{ \a \in \Delta(\fl) \ | \ \a(\ttE_\varphi)=0  \} \ = \ 
  \#\{ \a \in \Delta(\fl) \ | \ \a(\ttE_\varphi) = 1 \} \,.
\end{equation}

\begin{lemma}\label{L:DKS}
Given $\fl_\bR \in \cL_{\varphi,\ft}$, there exists a DKS triple $\{ \overline\sE , \sZ , \sE\} \subset \fl_\bC^\tss$ with neutral element $\sZ=2\,\pi^\tss_\fl(\ttE_\varphi)$.  
\end{lemma}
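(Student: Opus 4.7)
The plan is to apply Theorem \ref{T:cHT} to the Hodge representation that $\varphi$ induces on the semisimple factor $\fl^\tss_\bR$, and then read off the DKS triple from the Cayley transform \eqref{E:DKStrp} supplied by the proof of that theorem.

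First I set $\sZ := 2\pi^\tss_\fl(\ttE_\varphi)$.  Since $\bi\ttE_\varphi \in \ft \subset \fl_\bR \cap \fk_\bR$ and the $\varphi$--stability of $\fl_\bR$ ensures that the Levi decomposition $\fl_\bC = \fz \oplus \fl^\tss_\bC$ respects both the Hodge and Cartan decompositions of $\fg_\bC$, one has $\sZ \in \bi(\fl^\tss_\bR \cap \fk_\bR) \subset \fk_\bC \cap \fl^\tss_\bC$, which confirms the $\sZ \in \fk_\bC$ requirement for a DKS triple.  By Lemma \ref{L:levi1}, $\varphi(S^1) \subset L_\bR$, so projecting to the semisimple factor yields a Hodge representation $(\fl^\tss_\bR, Q_{\fl^\tss}, \tAd, \varphi|_{L^\tss_\bR})$ of $L^\tss_\bR$ with Mumford--Tate subdomain $\sD^\tss \subset D$ (cf.\ Remark \ref{R:HSonS}), whose grading element \eqref{E:Ephi} is $\pi^\tss_\fl(\ttE_\varphi) = \half\sZ$.

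Next I verify the two conditions of Theorem \ref{T:cHT} for $\sD^\tss$.  The distinguished hypothesis on $\sZ$, together with Bala--Carter (Theorem \ref{T:BC}), supplies a standard triple $\{N^+, \sZ, N\}$ in $\fl^\tss_\bC$ whose Jacobson--Morosov parabolic coincides with the stabilizer $\fp^\tss = \tStab_{L^\tss_\bC}(\sF^\sb_{\fl^\tss})$ of the Hodge flag.  In particular $\fp^\tss$ is even Jacobson--Morosov, $\half\sZ = \ttE_{\fp^\tss}$, the IPR on $\sD^\tss$ is bracket-generating, and condition (a) is satisfied.  Condition (b) follows from a brief root--parity computation: a root $\a \in \Delta(\fl^\tss_\bC)$ has its root space in $\fk_\bC$ (resp.\ $\fk_\bC^\perp$) if and only if $\a(\ttE_\varphi)$ is even (resp.\ odd), so $\a(\sZ) = 2\a(\ttE_\varphi)$ lies in $4\bZ$ on the compact simple roots of $\fl^\tss_\bC$ and in $4\bZ + 2$ on the noncompact simple root, giving the required congruences $\gamma_i(\cN) \equiv 0 \pmod 4$ and $\a'(\cN)/2$ odd.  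Theorem \ref{T:cHT} then produces a Hodge--Tate degeneration on $\sD^\tss$, and the Cayley transform \eqref{E:DKStrp} of the Cayley triple furnished by its proof is the desired DKS triple $\{\overline\sE, \sZ, \sE\} \subset \fl^\tss_\bC$ with neutral element $\sZ$.

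The main obstacle is reconciling the complex standard triple given by Bala--Carter (which lives in $\fl^\tss_\bC$) with the real standard triple required by Theorem \ref{T:cHT}(a) (which must lie in $\fl^\tss_\bR$).  To close this gap I would invoke the \Dokovic--Kostant--Sekiguchi correspondence \eqref{E:DKS}: the compact characteristic vector computed above uniquely specifies, via Theorem \ref{T:ccv}, a real nilpotent $\tAd(L^\tss_\bR)$--orbit in $\fl^\tss_\bR$, whose Cayley representative (Remark \ref{R:cayley}) then provides the real standard triple needed to apply Theorem \ref{T:cHT}.
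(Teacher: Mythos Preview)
Your argument has a genuine circularity. Theorem \ref{T:cHT} is a statement of the form ``$D$ admits a Hodge--Tate degeneration if and only if there exists a \emph{real} standard triple $\{N^+,Y,N\}\subset\fg_\bR$ satisfying (a) and (b).'' To invoke it you must first exhibit such a real triple in $\fl^\tss_\bR$; but producing a real triple whose Cayley transform has neutral element $\sZ$ is precisely the content of Lemma \ref{L:DKS}. The triple you obtain from Bala--Carter is only complex (with neutral element $\sZ\in\bi\fl^\tss_\bR$, not in $\fl^\tss_\bR$), so condition (a) is not verified. Your proposed fix does not close the gap: Theorem \ref{T:ccv} asserts that the compact characteristic vector is a \emph{complete invariant} of a real nilpotent orbit---it distinguishes orbits that already exist---but says nothing about which vectors are actually realized. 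You cannot conjure a real orbit from a list of congruences on $\a(\sZ)$.

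The paper's proof avoids this by working directly with the complex triple. Since $\sZ$ differs from $2\ttE_\varphi$ by a central element, $\sZ$ acts on $\fl^p$ by $2p$; the bracket relations $[\sZ,\sE^\pm]=\pm2\sE^\pm$ then force $\sE^+\in\fl^1\subset\fk_\bC^\perp$ and $\sE\in\fl^{-1}\subset\fk_\bC^\perp$. At this point one applies Sekiguchi's theorem \cite{MR867991} (the constructive half of the DKS correspondence): any standard triple with neutral element in $\fk_\bC$ and nilpotents in $\fk_\bC^\perp$ is $K'_\bC$--conjugate to a genuine DKS triple $\{\overline\sE',\sZ',\sE'\}$. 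A short Weyl--group argument (both $\sZ$ and $\sZ'$ are twice canonical grading elements for $K'_\bC$--conjugate parabolics, hence $\sW_{K'}$--conjugate in $\bi\ft$) then aligns $\sZ'$ with $\sZ$. Your root--parity computation is correct and reappears in the paper as Claim \ref{cl:1}, but there it is used \emph{after} Lemma \ref{L:DKS} has supplied the real triple---it is a consequence of the lemma, not a substitute for its proof.
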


\noindent The lemma is proved in \S\ref{S:prf}.

Let $\fg_\bC = \op \fg^p$ be the $\ttE_\varphi$--eigenspace decomposition \eqref{SE:grading}.  Recall from \eqref{E:p-pp} that the Hodge filtration $F^\sb_{\varphi,\fg}$ of $\fg_\bC$ induced by $\varphi$ is given by $F^p_{\varphi,\fg} = \op_{q\ge p} \fg^q$.  The parabolic $\fp_\varphi = \fg^0 \op \fg^+$ is the Lie algebra of the stabilizer $P_\varphi \subset G_\bC$ of $\varphi$, and the $0$--eigenspace $\fg^0$ is a Levi subalgebra of $\fg_\bC$ (\S\ref{S:P}) containing the Cartan subalgebra $\fh = \ft\ot_\bR\bC$.  Let $\sW^0 \subset \sW \subset \tAut(\fh)$ denote the Weyl group of $\fg^0$ (Remark \ref{R:weyl}).  Then $\sW^0$ acts on $\cL_{\varphi,\ft}$.  Given $\fl_\bR \in \cL_{\varphi,\ft}$, let $[\fl_\bR]$ denote the $\sW^0$--conjugacy class, and let
\[
  \Lambda_{\varphi,\ft} \ := \ 
  \{ [\fl_\bR] \ | \ \fl_\bR \in \cL_{\varphi,\ft} \}
\]
be the corresponding set of $\sW^0$--conjugacy classes.\footnote{The r\^ole of $\sW^0$ here is anticipated by Cattani and Kaplan's \cite[Proposition 3.29]{MR496761}.}

Finally we note that \eqref{SE:kCkperpC} and \eqref{E:p-pp} imply $\fg^0$ has compact real form 
\[
  \fk^0_\bR \ := \ \fg^0 \cap \fg_\bR \ = \ \fp_\varphi \,\cap\, \fk_\bR\,;
\]
let $K^0_\bR = P_\varphi \cap K_\bR$ denote the corresponding Lie subgroup.  (Note that $K^0_\bR$ is the stabilizer of $\varphi \in D$ in $G_\bR$.)  Then
\begin{equation} \label{E:W0}
  \hbox{elements of $\sW^0$ admit representatives in $K_\bR^0$.}
\end{equation}

\begin{theorem} \label{T:cPMHS}
Let $V_\bR$ be a Hodge representation of $G_\bR$, and assume that the infinitesimal period relation on the associated Mumford--Tate domain $D = G_\bR/K^0_\bR$ is bracket--generating.  With the notation above, we have:
\begin{a_list_emph}
\item 
There is a bijection $\Psi_D \leftrightarrow \Lambda_{\varphi,\ft}$.  That is, up to the action of $G_\bR$, the $\bR$--split polarized mixed Hodge structures on $D$ are indexed by the $\sW^0$--conjugacy classes of $\cL_{\varphi,\ft}$. 
\item
Given $\fl_\bR \in \cL_{\varphi,\ft}$, let $\{ \overline\sE , \sZ , \sE\} \subset \fl_\bC^\tss$ be a DKS--triple with neutral element $\sZ=2\,\pi^\tss_\fl(\ttE_\varphi)$, 
\emph{\cf~Lemma \ref{L:DKS}}.  The element $[F^\sb,N] \in \Psi_D$ corresponding to $[\fl_\bR] \in \Lambda_{\varphi,\ft}$ is represented by $(F^\sb,N) = \varrho^{-1} \cdot (\varphi,\sE)$, where 
\[
  \varrho \ := \ \texp\,\bi\tfrac{\pi}{4} ( \sE + \overline\sE )
  \ \in \ L_\bC \,.
\]
\item
The image of the reduced limit period mapping is $\Phi_\infty(F^\sb,N) = \varrho(\varphi) = \varrho^2(F^\sb)$.   
\item
If $V_\bC = \op \, V^\m$ is the weight space decomposition (with respect to $\fh$), then the Deligne splitting $V_\bC = \op I^{p,q}$ induced by $(F^\sb,N)$ is given by 
\begin{equation} \label{E:Ipq}
  {\varrho}(I^{p,q}) \ = \ 
  \bigoplus_{\mystack{\m(\ttE_\varphi)=p}{\m(\sZ) = p+q}} V^\m \,.
\end{equation}
\item 
With respect to the Deligne splitting $\fg_\bC = \op\,I^{p,q}_\fg$ we have $\fl_\bC \subset \op I^{p,p}_\fg$.
\end{a_list_emph}
\end{theorem}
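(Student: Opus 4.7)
The plan is to combine Theorem \ref{T:underHT}, which exhibits every $\bR$--split PMHS on $D$ as canonically induced from a Hodge--Tate degeneration on a Levi subalgebra $\fl_\bR \subset \fg_\bR$, with the classification of Hodge--Tate degenerations from Theorem \ref{T:cHT}. Once the base point $\varphi$ and compact Cartan $\ft$ are rigidified, the $G_\bR$--freedom on the PMHS reduces to $\sW^0$--freedom on $\fl_\bR$, and the Bala--Carter distinguished condition on the underlying standard triple becomes precisely the condition $\fl_\bR \in \cL_{\varphi,\ft}$.

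For the forward map $\Psi_D \to \Lambda_{\varphi,\ft}$ I would pick a representative $(F^\sb, N)$. Theorem \ref{T:underHT} then produces the Levi $\fl_\bR = \bigoplus_p I^{p,p}_{\fg,\bR}$, which contains $N$, together with a Hodge--Tate degeneration $(\sF^\sb_\fl, N)$ on the Mumford--Tate subdomain $\sD$ for $L_\bR$. Applying the necessity half of Theorem \ref{T:cHT} in $\fl^\tss_\bR$ yields a Cayley triple $\{N^+, Y, N\}$ with $Y$ even, and by Remark \ref{R:CT-MP} the point $\varphi' := \varrho(F^\sb)$ is a Matsuki point of $D$. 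Since $\fl_\bR$ is by construction the smallest $\varphi'$--stable Levi containing $N$, Lemma \ref{L:dist} shows that $Y = 2\,\pi^\tss_\fl(\ttE_{\varphi'})$ is distinguished in $\fl^\tss_\bC$. I would then use \eqref{E:mat} to $K_\bR$--conjugate $\varphi'$ to $\varphi$, and a further $K_\bR^0$--conjugation to arrange $\ft \subset \fl_\bR$; the resulting class $[\fl_\bR]$ is well--defined because the residual freedom in $K_\bR^0$ preserving $\ft$ acts on $\ft$ through $\sW^0$ by \eqref{E:W0}.

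For the inverse direction and parts (b)--(e), given $[\fl_\bR] \in \Lambda_{\varphi,\ft}$, Lemma \ref{L:DKS} provides a DKS triple $\{\overline\sE, \sZ, \sE\} \subset \fl^\tss_\bC$ with $\sZ = 2\,\pi^\tss_\fl(\ttE_\varphi)$. Setting $(F^\sb, N) := \varrho^{-1} \cdot (\varphi, \sE)$, formula \eqref{E:ZvY} recovers a Cayley triple in $\fl^\tss_\bR$, and the sufficiency half of Theorem \ref{T:cHT} shows $(\sF^\sb_\fl, N)$ is a Hodge--Tate degeneration on $\sD$; then Remark \ref{R:underHT} implies $(F^\sb, N)$ is an $\bR$--split PMHS on $D$, giving (b). Part (c) follows from \eqref{E:Phi-im} applied in $\fl^\tss_\bR$, since the reduced limit map on $D$ is determined by its underlying Hodge--Tate data on $\sD$. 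For (d), the Deligne splitting is characterized by $\tAd_{\varrho^{-1}}(\ttE_\varphi)$ acting on $I^{p,q}$ by $p$ and $Y = \tAd_{\varrho^{-1}}(\sZ)$ acting by $p+q$; transporting by $\varrho$ and intersecting with the $\fh$--weight spaces of $V_\bC$ yields \eqref{E:Ipq}. Statement (e) is then immediate from Theorem \ref{T:underHT}(a), which in fact gives $\fl_\bC = \bigoplus_p I^{p,p}_\fg$.

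The hard part will be injectivity of the forward map: showing that two $\bR$--split PMHS normalizing to $\sW^0$--conjugate Levis are $G_\bR$--conjugate. After bringing both to share the Matsuki point $\varphi$ and Cartan $\ft$, the two Levis differ by some $w \in \sW^0$; lifting $w$ to $K_\bR^0$ via \eqref{E:W0} produces a candidate conjugating element, and the uniqueness clause of Theorem \ref{T:cHT} --- that the $\tAd(G_\bR)$--orbit of $N$ is determined by the compact characteristic vector of the underlying Hodge--Tate structure --- will be needed to ensure this lift actually carries one standard triple to the other.
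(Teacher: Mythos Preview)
Your overall architecture is right and closely mirrors the paper, but there is a genuine gap in the forward map $\Psi_D \to \Lambda_{\varphi,\ft}$. You take $\fl_\bR = \bigoplus_p I^{p,p}_{\fg,\bR}$ from Theorem~\ref{T:underHT} and then assert that ``$\fl_\bR$ is by construction the smallest $\varphi'$--stable Levi containing $N$,'' invoking Lemma~\ref{L:dist} to conclude that $Y = 2\,\pi^\tss_\fl(\ttE_{\varphi'})$ is distinguished in $\fl^\tss_\bC$. This assertion is false in general: the Levi $\tilde\fl_\bC := \bigoplus_p I^{p,p}_\fg$ is merely \emph{a} $\varphi'$--stable Levi containing the triple, not the minimal one. (Take the extreme case where $(F^\sb,N)$ is itself Hodge--Tate on $D$: then $\tilde\fl_\bC = \fg_\bC$, and the principal nilpotent aside, $N$ will lie in a proper Levi.) Consequently there is no reason for $2\,\pi^\tss_{\tilde\fl}(\ttE_{\varphi'})$ to be distinguished, so the Levi you produce need not lie in $\cL_{\varphi,\ft}$ at all.

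The paper fixes this by inserting an extra step: after obtaining $\tilde\fl_\bC = \bigoplus_p I^{p,p}_\fg$ and the DKS--triple, one passes to a \emph{minimal} conjugation-- and $\varphi$--stable Levi $\fl_\bC \subset \tilde\fl_\bC$ containing the triple. It is in this smaller $\fl_\bC$ that $\sZ$ is distinguished (via Lemma~\ref{L:dist} and Remark~\ref{R:dist}, using No\"el's characterization of noticed nilpotents). The non-uniqueness of this minimal Levi is then controlled: any two choices differ by the reductive centralizer $Z(\overline\sE,\sE)$ in $K^0_\bR \cap L_\bR$, whose Weyl group sits inside $\sW^0$, so the class in $\Lambda_{\varphi,\ft}$ is well-defined. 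Your claim in (e) that $\fl_\bC = \bigoplus_p I^{p,p}_\fg$ is a symptom of the same conflation; the theorem only asserts (and the construction only gives) the inclusion $\fl_\bC \subset \bigoplus_p I^{p,p}_\fg$. Finally, what you flag as ``the hard part'' (injectivity) is handled in the paper not by a separate argument but by the well-definedness of the two maps together with Rao's theorem on conjugacy of standard triples sharing a neutral element; once the minimal-Levi step is in place, the two maps are inverse by construction.
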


\noindent
%
Theorem \ref{T:cPMHS} is proved in \S\ref{S:prf}, and a number of examples are worked out in \S\ref{S:egPMHS}.  As will be discussed in \S\ref{S:po}, Theorem \ref{T:cPMHS}(c) yields a parameterization of the polarized orbits in $\tbd(D) \subset \check D$.

\subsection{Horizontal $\tSL(2)$s} \label{S:hSL2}

In this section we will show that Theorem \ref{T:cPMHS} yields a classification of the horizontal $\tSL(2)$s on $D$, up to the action of $G_\bR$.   

Let 
\begin{eqnarray*}
  \fsl_2\bR & = & \tspan_\bR\{ \bn^+ , \by , \bn \} \,,\\
  \fsl_2\bC & = & \tspan_\bC\{ \bn^+ , \by , \bn \}
  \ = \ \tspan_\bC\{ \overline\be , \bz , \be \}
\end{eqnarray*}
be the algebras defined by \eqref{E:stdtri_sl2} and \eqref{E:stdtri_su11}.  Given $\varphi \in D$, recall the grading element $\ttE_\varphi$ given by \eqref{E:Ephi}, and let $\fg_\bC = \op \fg^p$ be the corresponding eigenspace decomposition given by \eqref{SE:grading}.  The latter is also the Hodge decomposition by \eqref{E:p-pp}.  A \emph{horizontal $\tSL(2)$ at $\varphi$} is given by a representation $\upsilon : \tSL(2,\bC) \to G_\bC$ such that 
\begin{subequations}\label{SE:hTDS}
\begin{equation}
  \upsilon(\tSL(2,\bR)) \ \subset \ G_\bR 
\end{equation}
and 
\begin{equation}
  \upsilon_* \overline\be \ \in\  \fg^1 \,,\quad
  \upsilon_* \bz \ \in\  \fg^0 \,,\quad
  \upsilon_* \be \ \in\  \fg^{-1} \,.
\end{equation}
\end{subequations}
We will say that $\upsilon$ is a \emph{horizontal $\tSL(2)$} if it is horizontal at some $\varphi \in D$.  

\begin{remark}\label{R:hTDS}
Observe that \eqref{SE:hTDS} implies that $\upsilon_*\{ \overline\be , \bz , \be\}$ is a DKS--triple with respect to the maximal compact subgroup $K_\bR \subset G_\bR$ determined by the Cartan involution $\varphi(\bi)$; likewise $\upsilon_*\{ \bn^+ , \by , \bn\}$ is a Cayley triple.
\end{remark}

Note that $g \in G_\bR$ acts on the set of horizontal $\tSL(2)$s by $\upsilon \mapsto g \cdot \upsilon$.  Let 
\[
  \Upsilon_D \ := \ \{ [\upsilon] \ | \ \upsilon \hbox{ is a horizontal $\tSL(2)$}\}
\]
be the set of $G_\bR$--equivalence classes.

\begin{theorem}\label{T:cSL2}
With the notation and assumptions of Theorem \ref{T:cPMHS}, we have:
\begin{a_list_emph}
\item
There is a bijection $\Upsilon_D \leftrightarrow \Lambda_{\varphi,\ft}$.  That is, up to the action of $G_\bR$, the horizontal $\tSL(2)$s on $D$ are parameterized by the $\sW^0$--conjugacy classes of $\cL_{\varphi,\ft}$.
\item
Given $\fl_\bR \in \cL_{\varphi,\ft}$, let $\{ \overline\sE , \sZ , \sE\} \subset \fl_\bC^\tss$ be a DKS--triple with neutral element $\sZ=2\,\pi^\tss_\fl(\ttE_\varphi)$, 
\emph{\cf~Lemma \ref{L:DKS}}.   The equivalence class $[\upsilon] \in \Upsilon_D$ corresponding to $[\fl_\bR] \in \Lambda_{\varphi,\ft}$ is represented by the $\upsilon : \tSL_2\bC \to L_\bC^\tss$ given by 
\begin{equation}\label{E:hTDS}
  \upsilon_* \overline\be \ = \ \overline\sE \,,\quad
  \upsilon_* \bz \ = \ \sZ \,,\quad
  \upsilon_* \be \ = \ \sE \,.
\end{equation}
\end{a_list_emph}
\end{theorem}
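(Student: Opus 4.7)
The plan is to deduce Theorem \ref{T:cSL2} from Theorem \ref{T:cPMHS} by establishing an explicit, $G_\bR$--equivariant bijection between horizontal $\tSL(2)$s and $\bR$--split polarized mixed Hodge structures on $D$, realizing the equivalence \eqref{E:equiv} in a form compatible with the parameterization by $\Lambda_{\varphi,\ft}$.

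First I would construct a map $\Upsilon_D \to \Psi_D$. Given a horizontal $\tSL(2)$ $\upsilon$, horizontal at some $\varphi \in D$, set $\{\overline\sE,\sZ,\sE\} := \upsilon_*\{\overline\be,\bz,\be\}$ and $\{N^+,Y,N\} := \upsilon_*\{\bn^+,\by,\bn\}$. By Remark \ref{R:hTDS} the first is a DKS--triple and the second a Cayley triple, related by \eqref{E:ZvY} with $\varrho = \texp\,\bi\tfrac{\pi}{4}(\sE+\overline\sE)$. Setting $F^\sb := \varrho^{-1}(\varphi)$ and invoking Theorem \ref{T:dcks}(a) (together with the compatibility \eqref{SE:hTDS} between the grading conditions for horizontality and the bigrading conditions \eqref{E:NYN} for an $\bR$--split PMHS), the pair $(F^\sb,N)$ is an $\bR$--split PMHS on $D$. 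If $\upsilon$ is horizontal at a second point $\varphi'\in D$, then $\varphi$ and $\varphi'$ both lie in the $\upsilon(\tSU(1,1))$--orbit, hence are $G_\bR$--conjugate, so the conjugacy class $[F^\sb,N]\in\Psi_D$ depends only on $[\upsilon]\in\Upsilon_D$.

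Next I would build the inverse. Given an $\bR$--split PMHS $(F^\sb,N)$, Theorem \ref{T:cPMHS}(b) exhibits it in the form $\varrho^{-1}\cdot(\varphi,\sE)$ for a DKS--triple $\{\overline\sE,\sZ,\sE\}\subset\fl_\bC^\tss$. Define $\upsilon:\tSL_2\bC \to G_\bC$ by \eqref{E:hTDS}. That $\upsilon(\tSL_2\bR)\subset G_\bR$ follows because $\tAd_{\varrho^{-1}}\{\overline\sE,\sZ,\sE\} = \{N^+,Y,N\}\subset\fg_\bR$, and horizontality at $\varphi$ follows from the placement $\sE \in \fg^{-1}$, $\sZ \in \fg^0$, $\overline\sE \in \fg^1$ dictated by Theorem \ref{T:cPMHS}(e) and \eqref{E:p-pp}. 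The two assignments are mutually inverse on conjugacy classes, and both are manifestly $G_\bR$--equivariant, so they descend to a bijection $\Upsilon_D \leftrightarrow \Psi_D$. Composing with Theorem \ref{T:cPMHS}(a) yields Theorem \ref{T:cSL2}(a), and part (b) is read off directly from Theorem \ref{T:cPMHS}(b) together with formula \eqref{E:hTDS}.

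The main obstacle is Step 1: showing that the flag $F^\sb = \varrho^{-1}(\varphi)\in\check D$ really does pair with $N$ to give an $\bR$--split PMHS rather than merely some mixed Hodge structure. This is essentially inverting the Cayley--transform argument in the proof of Theorem \ref{T:cHT}: the horizontality conditions \eqref{SE:hTDS} imply that $\{\overline\sE,\sZ,\sE\}$ is a DKS--triple with respect to the Cartan involution $\varphi(\bi)$, so that $\{N^+,Y,N\}$ is a standard triple in $\fg_\bR$ with $N\in\fg^{-1,1}_\bR$; the grading element $Y$ splits the weight filtration $W_\sb(N,V_\bR)$ and, via $\varrho$, the Hodge filtration $F^\sb$, which forces the Deligne splitting to satisfy $\overline{I^{p,q}}=I^{q,p}$ as required for $\bR$--splitness. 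A secondary point is verifying that the $L_\bC$ appearing in Theorem \ref{T:cPMHS}(b) agrees with the Levi subalgebra generated by $\upsilon$'s standard triple together with $\ft$, which is needed to match the indexing by $\Lambda_{\varphi,\ft}$; this is immediate from Theorem \ref{T:underHT}(a) applied to $(F^\sb,N)$.
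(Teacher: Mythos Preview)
Your proposal is correct and follows essentially the same route as the paper: both establish the $G_\bR$--equivariant bijection $\Upsilon_D \leftrightarrow \Psi_D$ explicitly via the Cayley transform $\varrho$ (sending $\upsilon$ horizontal at $\varphi$ to $(F^\sb,N) = (\varrho^{-1}(\varphi),\upsilon_*\bn)$ and back), then compose with Theorem \ref{T:cPMHS}. The only minor difference is that for the inverse direction the paper reads off $Y$ and $N^+$ directly from the Deligne splitting of a given $(F^\sb,N)$, whereas you invoke the normal form of Theorem \ref{T:cPMHS}(b); and the paper verifies $\bR$--splitness of $(F^\sb,N)$ by citing the explicit formula \eqref{E:I'} for the Deligne splitting, which is the same computation you outline in your ``main obstacle'' paragraph.
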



\begin{proof}
The result will follow from Theorem \ref{T:cPMHS} and the $G_\bR$--equivariant bijection \eqref{E:equiv}.  This bijection is well--known, \cf\cite{\CKpmhs, \CKSdeg, \CKextn, \Schmid, MR1245714}; the following proof is given the sake of completeness.

Given $(F^\sb,N)$, the Deligne splitting $\fg_\bC = \op I^{p,q}_\fg$ defines a semisimple $Y \in \fg_\bR$ by $\left. Y \right|_{I^{p,q}_\fg} = (p+q) \one$.  There exists a unique $N^+ \in \fg_\bR$ completing the pair $\{ Y , N\}$ to a standard triple \cite[pp.~477]{\CKSdeg}.  As discussed in Remark \ref{R:CT-MP}, this standard triple is a Cayley triple with respect to the Cartan involution $\varphi(\bi)$ defined by \eqref{E:phi}.  The corresponding Cayley transform \eqref{E:DKStrp} defines a horizontal $\tSL(2)$ $\upsilon$ at $\varphi = \varrho(F^\sb)$ by \eqref{E:hTDS}.  This defines the map from $\bR$--split polarized mixed Hodge structures to horizontal $\tSL(2)$s.

Conversely, suppose that $\upsilon$ is a horizontal $\tSL(2)$ at $\varphi \in D$.  By Remark \ref{R:hTDS}, \eqref{E:hTDS} defines a DKS--triple $\{ \overline\sE,\sZ,\sE\}$.  Let $\{ N^+ , Y , N\}=\upsilon_*\{ \bn^+ , \by , \bn\}$ be the corresponding Cayley triple, which is defined by \eqref{E:ZvY} and \eqref{E:bd}.  Recalling that $\varrho$ 
is given by \eqref{E:bd}, define $F^\sb = \varrho^{-1}(\varphi)$.  Then $(F^\sb,N)$ is a nilpotent orbit.  Moreover, the Deligne splitting $V_\bC = \op I^{p,q}$ of the corresponding polarized mixed Hodge structure is as given by \eqref{E:I'} in Remark \ref{R:I'}, and is manifestly $\bR$--split.  This defines the map from horizontal TDS to $\bR$--split polarized mixed Hodge structures.  Moreover, this map is easily seen to be the inverse of the map defined in the previous paragraph.
\end{proof}

\subsection{Proof of Theorem \ref{T:cPMHS}} \label{S:prf}

We begin with the 

\begin{proof}[Proof of Lemma \ref{L:DKS}]
As a distinguished semisimple element $\sZ$ is the neutral element of a standard triple $\{ \sE^+ , \sZ , \sE\}$.

Let $\fl_\bC = \op \fl^p$ be the $\ttE_\varphi$--eigenspace decomposition.  We have $\fl^p = \fl_\bC \cap \fg^p$, $\fl_\bC \cap \fk_\bC = \fl^\teven$ and $\fl_\bC \cap \fk_\bC^\perp = \fl^\todd$.  Since $\sZ$ differs from $2\ttE_\varphi$ 
by an element of the center of $\fl_\bC$, we see that $\sZ$ acts on $\fl^p$ by the eigenvalue $2p$. 
From $[\sZ,\sE^+] = 2\sE^+$ and $[\sZ,\sE] = -2\sE$, we see that $\sE^+ \in \fl^{1} \subset \fk_\bC^\perp$ and $\sE \in \fl^{-1} \subset \fk_\bC^\perp$. 
It remains to show that $\sE^+$ and $\sE$ may be chosen so that $\sE^+ = \overline\sE$.  

Let $K' = L\cap K$ and $\fk' = \fl \cap \fk$.  It is a consequence of the Djokovi{\'c}--Kostant--Sekiguchi correspondence and Remark \ref{R:cayley} that  $\{ \sE^+ , \sZ , \sE\}$ is $K'_\bC$--conjugate to a DKS--triple $\{ \overline\sE' , \sZ' , \sE'\}$ in $\fl_\bC^\tss$, \cf\cite{MR867991}.  By construction $\sZ \in \bi \ft$, and $\ft$ is a Cartan subalgebra of $\fk_\bR'$.  Therefore $\sZ'\in\bi\fk_\bR'$ is $K_\bR'$--conjugate to an element of $\bi\ft$.  So, without loss of generality, $\sZ' \in \bi\ft$.

The claim will follow once we show that $\sZ$ and $\sZ'$ are conjugate under the Weyl group $\sW_{K'} \subset \tAut(\ft)$ of $\fk'_\bR$.  First, observe that $\sZ$ and $\sZ'$ (i) lie in the same Cartan $\fh$, and (ii) are (twice) the grading elements associated with parabolic subalgebras $\fp_{\sZ}$ and $\fp_{\sZ'}$ that are $K'_\bC$--conjugate; it follows that $\sZ$ and $\sZ'$ are conjugate under an element $w$ of the Weyl group of $\fl_\bC$.  Because $\fp_{\sZ}$ and $\fp_{\sZ'}$ are conjugate under $K'_\bC$, the element $w$ must preserve the set of compact roots $\Delta(\fk_\bC) \subset \Delta$, and is therefore an element of $\sW_{K'}$. 
\end{proof}

We now turn to the proof of Theorem \ref{T:cPMHS}.  To establish the bijection $\Lambda_{\varphi,\ft} \leftrightarrow \Psi_D$, first suppose we are given a Levi subalgebra $\fl_\bC\in \cL_{\varphi,\ft}$; the corresponding $[F^\sb,N]\in\Psi_D$ is obtained as follows.  Given the DKS--triple of Lemma \ref{L:DKS}, let $\{ N^+ , Y , N\} = \tAd_\varrho^{-1}\{\overline\sE,\sZ,\sE\} \subset \fl^\tss_\bR$ 
be the corresponding Cayley triple.  

From Lemma \ref{L:levi1} we see that $\varphi$ induces a sub--Hodge structure on the real form $\fl_\bR$.  Let $\sD = L_\bR \cdot \varphi \subset D$ denote the corresponding Mumford--Tate domain.

\begin{claim} \label{cl:1}
Define $\sF^\sb_\fl \in \check\sD$ by $\sF^{-p}_\fl = W_{2p}(N^+,\fl_\bC)$.  Then the pair $(\sF^\sb_\fl,N)$ defines a Hodge--Tate degeneration on $\sD$.
\end{claim}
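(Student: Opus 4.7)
The plan is to apply Theorem \ref{T:cHT} to the semisimple part $L^\tss_\bR$ acting on $\fl^\tss_\bR$, and then extend the resulting Hodge--Tate degeneration across the center of $\fl_\bR$.  Writing $\fl_\bR = \fz_\bR \oplus \fl^\tss_\bR$, we note that the center $\fz_\bC$ is contained in the Cartan $\fh$ and hence in $\fg^{0,0}$, while $N^+$ annihilates $\fz_\bC$; so the restriction of $\sF^\sb_\fl$ to $\fz$ is the trivial $(0,0)$ Hodge--Tate flag ($\sF^p_\fz = \fz_\bC$ for $p \le 0$, and $\sF^p_\fz = 0$ for $p > 0$).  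It therefore suffices to verify that $(\sF^\sb_{\fl^\tss},N)$, defined by $\sF^{-p}_{\fl^\tss} = W_{2p}(N^+,\fl^\tss_\bC)$, is a Hodge--Tate degeneration on the Mumford--Tate subdomain $\sD^\tss \subset \sD$ associated with $L^\tss_\bR$; the two Deligne splittings will then combine into one on $\fl$ that is concentrated on the diagonal.

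To apply Theorem \ref{T:cHT} to $L^\tss_\bR$ I will verify its bracket--generating hypothesis together with conditions (a) and (b) of the theorem.  For bracket--generating, the difference $\ttE_\varphi - \pi^\tss_\fl(\ttE_\varphi) \in \fz_\bC$ acts trivially on $\fl^\tss_\bC$, so $\tfrac{1}{2}\sZ = \pi^\tss_\fl(\ttE_\varphi)$ acts on $\fl^\tss_\bC$ exactly as $\ttE_\varphi$ does; and because $\sZ$ is distinguished in $\fl^\tss_\bC$ by hypothesis, $\tfrac{1}{2}\sZ$ is the grading element \eqref{E:ttE} associated with its Jacobson--Morosov parabolic $W_0(\overline\sE,\fl^\tss_\bC) = \fp_\varphi \cap \fl^\tss_\bC$, which is the stabilizer in $L^\tss_\bC$ of the Hodge flag at $\varphi$ restricted to $\fl^\tss$.

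For condition (a) of Theorem \ref{T:cHT}: $\sZ$ is even because distinguished semisimple elements are so by the definition in \S\ref{S:dp}, hence its $\tAd_\varrho$--conjugate $Y$ is also even; and $\sF^0_{\fl^\tss} = W_0(N^+,\fl^\tss_\bC)$ is built into the definition of $\sF^\sb_{\fl^\tss}$.  For condition (b), recall from \eqref{SE:kCkperpC} and \eqref{E:p-pp} that a root of $\fg_\bC$ has its root space in $\fk_\bC$ (respectively $\fk^\perp_\bC$) if and only if its $\ttE_\varphi$--eigenvalue is even (respectively odd), and the compact and noncompact roots of $\fl^\tss_\bC$ are simply those roots of $\fg_\bC$ lying in $\fl^\tss_\bC$.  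Since $\sZ$ acts on $\fl^\tss_\bC$ as $2\ttE_\varphi$, every simple compact root $\gamma_i$ of $\fl^\tss_\bC$ satisfies $\gamma_i(\sZ) = 2\gamma_i(\ttE_\varphi) \equiv 0 \pmod{4}$, and the noncompact simple root $\alpha'$ of $\fl^\tss_\bC$ satisfies $\alpha'(\sZ) = 2\alpha'(\ttE_\varphi)$ with $\alpha'(\ttE_\varphi)$ odd.  Theorem \ref{T:cHT} then produces a Hodge--Tate degeneration on $\sD^\tss$ whose Hodge filtration is precisely $\sF^{-p}_{\fl^\tss} = W_{2p}(N^+,\fl^\tss_\bC)$; combining with the trivial $(0,0)$ structure on $\fz$ yields the claim.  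The main subtlety to address is the semisimplicity hypothesis of Theorem \ref{T:cHT}, which is why I first reduce from the reductive $L_\bR$ to $L^\tss_\bR$ before invoking it.
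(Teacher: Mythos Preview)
Your argument is correct and follows essentially the same route as the paper: both proofs verify the hypotheses of Theorem~\ref{T:cHT} for $\fl^\tss_\bR$ by observing that $\sZ$ acts on $\fl^\tss_\bC$ as $2\ttE_\varphi$, so compact roots have $\sZ$--value divisible by $4$ and noncompact roots have $\sZ$--value twice an odd integer. You are more explicit than the paper in separating off the center (so as to work with a genuinely semisimple group, as Theorem~\ref{T:cHT} requires) and in checking the bracket--generating hypothesis and condition~(a); the paper leaves these points implicit and focuses on condition~(b). One small imprecision: when $\fl^\tss_\bR$ is not simple there is one noncompact simple root $\alpha'$ per simple factor rather than a single one, but your parity argument applies to each of them (indeed to every noncompact root) verbatim.
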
  

\begin{proof}
The fact that $\fl_\bC$ is $\varphi(\bi)$--stable implies
\[
  \fl_\bC \ = \ ( \fl_\bC \cap \fk_\bC ) \,\op\,
  ( \fl_\bC \cap \fk_\bC^\perp ) \tand
  \fl_\bC^\tss \ = \ ( \fl_\bC^\tss \cap \fk_\bC ) \,\op\,
  ( \fl_\bC^\tss \cap \fk_\bC^\perp ) \,.  
\]
are both Cartan decompositions.  Since $\fl_\bC \supset \fh$, we may identify the roots of $\fl_\bC$ with a subset of the roots of $\fg_\bC$, and under this identification the (non)compact roots of $\fl_\bC$ are (non)compact roots of $\fg_\bC$.  It follows from \eqref{SE:kCkperpC} and \eqref{E:p-pp} that: (i) $\a(\sZ) \equiv 0$ mod $4$ for all compact roots of $\fl_\bC^\tss$, and (ii) $\b(\sZ)$ is even and $\half\b(\sZ)$ is odd for all non-compact roots.  The claim now follows from Theorem \ref{T:cHT}.
\end{proof}

\noindent Remark \ref{R:underHT} and Claim \ref{cl:1} imply that $(\sF^\sb_\fl,N)$ induces a nilpotent orbit $(F^\sb_\fg,N)$ on $D$.  At this point Theorem \ref{T:cPMHS}(b,c) follows from \eqref{E:ZvY}, \eqref{E:bd}, \eqref{E:phi} and \eqref{E:Phi-im}. 

\smallskip 

The nilpotent orbit $(F^\sb,N)$ depends on both the Levi subalgebra $\fl_\bR$ and our choice of DKS--triple $\{\overline\sE,\sZ,\sE\}$.  Suppose $\{ \overline\sE{}' , \sZ , \sE'\}$ is a second DKS--triple, also containing $\sZ$ as the neutral element.  Then Rao's \cite[Theorem 9.4.6]{\CoMc} implies that the triples are conjugate under $G^0 \cap L_\bR^\tss$.  It is then straightforward to confirm that the nilpotent orbit $({}'F^\sb,N')$ associated with the second DKS--triple is $G^0 \cap L_\bR^\tss$--congruent to $(F^\sb,N)$.  Whence the two nilpotent orbits determine the same conjugacy class $[F^\sb,N] \in \Psi_D$, and we have a well--defined map $\cL_{\varphi,\ft} \to \Psi_D$.  Finally, \eqref{E:W0} and Theorem \ref{T:cPMHS}(b) imply the map descends to $\Lambda_{\varphi,\ft} \to \Psi_D$.

\medskip

To address the second half of the correspondence asserted in Theorem \ref{T:cPMHS}(a) suppose that $[F^\sb,N] \in \Psi_D$.  We normalize our choice of representative $(F^\sb,N)$ as follows.  Let $\fg_\bC = \op I^{p,q}_\fg$ be the associated Deligne splitting, and let $\tilde \fl_\bC = \op I^{p,p}_\fg$ be the Levi subalgebra of Theorem \ref{T:underHT}.  Since $(F^\sb,N)$ is $\bR$--split, $\tilde\fl_\bC$ is necessarily stable under conjugation.  Moreover, we may complete $N$ to standard triple $\{N^+,Y,N\} \subset \tilde\fl^\tss_\bR$ so that \eqref{E:NYN} holds.  Conjugating $(F^\sb,N)$ by an element $g\in G_\bR$ if necessary, we may assume that this is a Cayley triple.  Then $\tilde\fl_\bC$ is $\varphi(\bi)$--stable.  Let $\{ \overline\sE,\sZ,\sE\} \subset \tilde\fl_\bC^\tss$ be the Cayley transform \eqref{E:DKStrp} of the Cayley triple, and let $\tilde\varphi$ be as given by \eqref{E:phi}.  Then $\tilde\varphi$ is a $K$--Matsuki point of $D$ (Remark \ref{R:CT-MP}), and therefore $K_\bR$--conjugate to $\varphi$ by \eqref{E:mat}.  So, conjugating $(F^\sb,N)$ by an element $g\in K_\bR$, if necessary, we may assume that $\tilde \varphi = \varphi$.

Let $\fl_\bC \subset \tilde\fl_\bC$ be a minimal conjugation and $\varphi$--stable Levi subalgebra containing the DKS--triple.  (Such a Levi is not unique; however any two such are conjugate under the reductive centralizer $Z(\overline\sE,\sE)$ of $\overline\sE$ and $\sE$ in $K^0_\bR \cap L_\bR$, see the proof of \cite[Proposition 1.1.3]{\Noel}.)  Then $\sZ$ is a distinguished semisimple element of the semisimple factor $\fl_\bC^\tss$ by Lemma \ref{L:dist} and Remark \ref{R:dist}.  By construction, $L_\bR$ admits the Hodge representation $(\fl_\bR,Q_\fl,\tAd,\varphi)$.  Therefore, $\fl_\bR$ has a compact Cartan subalgebra $\tilde\ft \ni \bi \ttE_\varphi$.  Since both Cartans $\ft$ and $\tilde\ft$ contain $\bi \ttE_\varphi$, they are necessarily Cartan subalgebras of the compact $\fk^0_\bR$.  Therefore, up to conjugation by $g \in K_\bR^0$, we may assume that $\ft = \tilde\ft$.  Thus $\fl_\bR \in \cL_{\varphi,\ft}$.  At this point, the ambiguity in our choice of minimal $\fl_\bC$ (see the parenthetical remark above) is up to the action of the Weyl group of $Z(\overline\sE,\sE)$.  Since the latter is a subgroup of $\sW^0$, we have a well--defined map $\Psi_D \to \Lambda_{\varphi,\ft}$.  This completes the proof of Theorem \ref{T:cPMHS}(a).

\medskip

It remains to establish Theorem \ref{T:cPMHS}(d).  The induced Deligne splitting $V_\bC = \op I^{p,q}$ may be obtained as follows.  Let $V_\bC = \op\,V^\lambda$ be the weight space decomposition of $V_\bC$.  That is, $\lambda \in \fh^*$ and $v \in V^\lambda$ if and only if $\xi(v) = \lambda(\xi) v$ for all $\xi \in \fh$.  It is immediate from \eqref{E:Phi-im} that $I^{p,\sb}$ is the 
\[
  \ttE' \ := \ \tAd_\varrho^{-1}(\ttE_\varphi)
\]
eigenspace for the eigenvalue $p$.  That is, 
\begin{equation} \nonumber
  I^{p,\sb} \ = \ \bigoplus_{\m(\ttE')=p} {}'V^\m\,;
\end{equation}
here $V_\bC = \op\,{}'V^\m$ is the weight space decomposition with respect to the Cartan subalgebra $\fh'=\tAd_\varrho^{-1}(\fh)$.  On the other hand, by \eqref{E:FW}, 
\begin{equation} \nonumber
  \bigoplus_{p+q=\ell} I^{p,q} \ = \ 
  \bigoplus_{\m(Y)=\ell} {}'V^\m
\end{equation}
is the $\ell$--eigenspace for $Y = \varrho^{-1}(\sZ)$.  
Thus
\begin{equation} \label{E:'Ipq}
  I^{p,q} \ = \ \bigoplus_{\mystack{\m(\ttE')=p}{\m(Y) = p+q}} {}'V^\m \,.
\end{equation}
Applying $\tAd_{\varrho}$ to \eqref{E:'Ipq} yields \eqref{E:Ipq}, and completes the proof of Theorem \ref{T:cPMHS}.
\hfill\qed

\begin{remark}[Computing $\sZ$] \label{R:Z}
If we wish to compute the Deligne splitting \eqref{E:Ipq} it is necessary to determine $\sZ$.  As a reductive algebra, $\fl_\bC$ decomposes into a direct sum of its center and a semisimple factor
\[
  \fl_\bC \ = \ \fz_\bC \,\op\,\fl_\bC^\tss \,;
\]
the key is to recall (Theorem \ref{T:cPMHS}(b)) that 
\begin{equation} \label{E:Z}
\hbox{$\sZ$ is the image of $2\ttE_\varphi$ under the projection $\fl_\bC\to\fl^\tss_\bC$.} 
\end{equation}
Let $\sS' \subset \sS \subset \fh^*$ be a choice of simple roots for $\fl_\bC^\tss \subset \fg_\bC$.
We have 
\[
  \fz_\bC \ = \ \tspan_\bC\{\ttS^j \ | \s_j \not\in\sS'\} \,.
\]
Likewise, the Cartan subalgebra of the semisimple factor is
\[
  \fh\,\cap\,\fl^\tss_\bC \ = \ \tspan_\bC\{ \ttH^i \ | \ \s_i \in \sS' \}\,,
\]
where $\ttH^i \subset [\fg^{\s_i} \,,\, \fg^{-\s_i}] \subset \fh$ is defined by $\s_i(\ttH^i) = 2$ (no sum over $i$).  The sets $\{\ttS^i\}_{i=1}^r$ and $\{\ttH^i\}_{i=1}^r$ are the bases $\fh$ dual to the simple roots and fundamental weights, respectively.  In particular, if $C = (C^i_j)$ is the Cartan matrix, so that $\s_i = C_i^j \w_j$, then $\ttH^j = C^j_i \ttS^i$.  Moreover, $\{ \ttS^j \ | \ \s_j \not\in\sS'\}\,\cup\,\{\ttH^i \ | \ \s_i \in \sS'\}$ is a basis of $\fh$.  Therefore, we may write 
\[
  \ttE_\varphi \ = \ \sum_{\s_j \not\in\sS'} n_j \,\ttS^j 
  \ + \ \sum_{\s_i\in\sS'} m_i \,\ttH^i\,,
\]
and \eqref{E:Z} yields
\[
  \sZ \ = \ 2 \sum_{\s_i\in\sS'} m_i\,\ttH^i \ \in \  \fh\,\cap\,\fl_\bC^\tss \,.
\]
\end{remark}

\begin{remark}[The Deligne splitting] \label{R:I'}
By \eqref{E:Z} we have $\ttE_\varphi = \half \sZ + \z$ 
with $\z \in \fz_\bC$.  Indeed the discussion of Remark \ref{R:Z} yields
\[
  \z \ = \ \sum_{\s_j \not\in\sS'} n_j \,\ttS^j \ \in \ \fh \,\cap\,\fz_\bC \,.
\]
Since both $\ttE_\varphi$ and $\sZ$ are imaginary (i.e.,~they lie in $\bi\fg_\bR$), $\z$ is as well.  Observe that 
\[
  \ttE' \ = \ \half Y + \z \,,
\]
and from this we may conclude that 
\[
  \overline{\ttE'} \ = \ \half Y - \z \,.
\]
Since $\sZ , \z \in \fh$, we have $Y , \z \in \fh'$ so that $\overline{\ttE'} \in \fh'$.  It follows that the Deligne splitting \eqref{E:'Ipq} is alternatively given by 
\begin{equation} \label{E:I'}
  I^{p,q} \ = \ \bigoplus_{\mystack{\mu(\ttE') = p}{\mu(\overline{\ttE'})=q}} {}'V^\m
\end{equation}
where $V_\bC = \op\, {}'V^\m$ is the weight space decomposition with respect to $\fh' = \tAd_\varrho^{-1}(\fh)$.
\end{remark}

\subsection{Polarized orbits} \label{S:po}

Let $D \subset \check D$ be a Mumford--Tate domain.  We say that a $G_\bR$--orbit $\cO \subset \tcl(D)$ is \emph{polarized (relative to $D$)} if it contains the image $\Phi_\infty(F^\sb,N)$ of a point $F^\sb \in \tilde B(N)$ under the reduced limit period mapping \eqref{E:Phi_infty}.  We think of the polarized orbits as the ``Hodge--theoretically accessible'' orbits.

Let $(\tilde F^\sb,N)$ be the $\bR$--split polarized mixed Hodge structure \eqref{E:tildeF} associated with $(F^\sb,N)$.  From Theorem \ref{T:dcks}(c) and \eqref{E:Phi_infty}, we see that $\Phi_\infty(F^\sb,N) = \Phi_\infty(\tilde F^\sb,N)$.  So, \emph{for the purpose of studying polarized orbits, it suffices to consider $\bR$--split polarized mixed Hodge structures.}  From Theorem \ref{T:dcks}(a,c) one may also deduce that \emph{$\tilde F^\sb$ and $\tilde F^\sb_\infty$ lie in the same $G_\bR$--orbit in $\check D$.} (See the proof of \cite[Lemma 3.12]{MR840721}.)

Since $\Phi_\infty( g \cdot F^\sb , \tAd_gN ) = g \cdot \Phi_\infty(F^\sb,N)$, we see that any two $G_\bR$--congruent $\bR$--split polarized mixed Hodge structures parameterize the same $G_\bR$--orbit $\cO \subset \tbd(D)$.  We say that $\cO$ is \emph{the orbit polarized by $[F^\sb,N] \in \Psi_D$}.  Theorem \ref{T:cPMHS}(c) describes the image of the surjection 
\begin{equation} \label{E:sur}
  \Psi_D \ \sur \ \{\hbox{polarized } \cO \subset \tcl(D)\}\,.
\end{equation}

\begin{remark}
The parameterization \eqref{E:sur} of the polarized orbits generalizes a construction of \cite{KR1} which obtains polarized $G_\bR$--orbits $\cO \subset \tbd(D)$ from sets of strongly orthogonal noncompact roots.  In the case that $D$ is Hermitian symmetric, all the boundary orbits $\cO \subset \tbd(D)$ are polarizable; they are all parameterized by the \cite{KR1}--construction \cite[Theorem 3.2.1]{\FHW}; and the parameterization is essentially that given by the Harish--Chandra compactification of $D$.
\end{remark}

Polarized orbits have received much attention recently, \cf\cite{MR3115136, GGR, KP2013, KR1}.  One basic result is the following.

\begin{theorem}[{Kerr--Pearlstein \cite{KP2013}}] \label{T:KP}
The complexified normal space to $\cO \subset \check D$ at the point $\varrho(\varphi) = \Phi_\infty(F^\sb,N)$ is 
\begin{equation} \label{E:NO}
  N_{\varrho(\varphi)} \cO \ot \bC \ = \ \bigoplus_{p,q>0} I^{-p,-q}_\fg\,.
\end{equation}
In particular, the (real) codimension of the (polarized) $G_\bR$--orbit $\cO$ is
\begin{equation} \label{E:codim}
  \tcodim_{\check D}\,\cO \ = \ \tdim_\bC \bigoplus_{p,q>0} I^{p,q}_\fg \,.
\end{equation}
Moreover, the boundary $\tbd(D) \subset \check D$ contains codimension one $G_\bR$--orbits and they are all polarized.  In this case the normal space
\begin{equation}\label{E:codim1N}
  N_{\varrho(\varphi)}\cO \ = \ \fg^{-\a}_\bR
\end{equation}
is naturally identified with a real root space.
\end{theorem}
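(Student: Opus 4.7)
The plan is to compute the stabilizer Lie algebra at $\varrho(\varphi) = \Phi_\infty(F^\sb, N)$ from the Deligne splitting, express the complexified normal space as a quotient of $\fg_\bC$, and identify this quotient with $L^{-1,-1}_\fg$ via $Q_\fg$-duality.  By Theorem~\ref{T:dcks}(c), passing from $(F^\sb, N)$ to the associated $\bR$--split $(\tilde F^\sb, N)$ leaves $\Phi_\infty$ (hence $\cO$) invariant, so I may assume $\overline{I^{p,q}_\fg} = I^{q,p}_\fg$.  Applying \eqref{E:Finfty} to the induced weight--zero polarized mixed Hodge structure on $\fg_\bC$ then yields
\[
\fp_\infty \,=\, F^0_{\fg,\infty} \,=\, \bigoplus_{s \le 0} I^{\sb,s}_\fg,
\qquad
\bar\fp_\infty \,=\, \bigoplus_{r \le 0} I^{r,\sb}_\fg,
\]
the second identity coming from the $\bR$--split relation $\overline{I^{r,s}_\fg} = I^{s,r}_\fg$.

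Since the tangent space to $\cO = G_\bR \cdot \varrho(\varphi)$ at $\varrho(\varphi)$ is $\fg_\bR/(\fg_\bR \cap \fp_\infty)$ and $(\fg_\bR \cap \fp_\infty) \otimes_\bR \bC = \fp_\infty \cap \bar\fp_\infty$, the complexified normal space is
\[
N_{\varrho(\varphi)} \cO \otimes_\bR \bC \,\cong\, \fg_\bC / (\fp_\infty + \bar\fp_\infty).
\]
The polarization $Q_\fg$ pairs $I^{r,s}_\fg$ perfectly with $I^{-r,-s}_\fg$ and annihilates all other pairings (a direct consequence of the polarization condition on each $\tGr_k(W^\fg_\sb)$ combined with the bidegree shift $(-k,-k)$ of $N^k$), so the annihilator of $\fp_\infty + \bar\fp_\infty$ under $Q_\fg$ is exactly $\bigoplus_{p,q>0} I^{-p,-q}_\fg = L^{-1,-1}_\fg$.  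Duality under $Q_\fg$ then identifies $L^{-1,-1}_\fg$ with the above quotient, yielding \eqref{E:NO}; the codimension formula \eqref{E:codim} is immediate from $\dim I^{p,q}_\fg = \dim I^{-p,-q}_\fg$, which combines $\bR$--splitness with the Jacobson--Morosov isomorphism $N^{p+q}\colon \tGr_{p+q} W^\fg_\sb \to \tGr_{-(p+q)} W^\fg_\sb$.

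For the codimension--one assertions, existence of a polarized codimension--one orbit follows from Theorem~\ref{T:cPMHS}(a) by selecting a Levi $\fl_\bR \in \cL_{\varphi,\ft}$ whose semisimple part has rank one (e.g.\ $\fl^\tss_\bR \cong \fsu(1,1)$); direct computation in the associated $\bR$--split PMHS then yields $\dim_\bC \bigoplus_{p,q>0} I^{-p,-q}_\fg = 1$, so the orbit has codimension one.  That every codimension--one boundary orbit is polarized is the subtle step, and I would invoke the Matsuki--correspondence analysis of Kerr--Pearlstein \cite{KP2013}, which shows that codimension--one boundary orbits necessarily arise from one--variable $\bR$--split nilpotent orbits of the form classified by Theorem~\ref{T:cPMHS}.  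Finally, given codimension--one $\cO$, the $\bR$--split identity $\dim I^{-p,-q}_\fg = \dim I^{-q,-p}_\fg$ forces the unique nonzero summand in $\bigoplus_{p,q>0} I^{-p,-q}_\fg$ to have $p = q$; the one--dimensional subspace $I^{-p,-p}_\fg$ is stable under the Cartan underlying the Deligne splitting and has nonzero $Y$--weight $-2p$, hence is a complex root space $\fg^{-\alpha}$, and the conjugation--invariance $\overline{I^{-p,-p}_\fg} = I^{-p,-p}_\fg$ equips it with a real structure, giving $N_{\varrho(\varphi)} \cO = \fg^{-\alpha}_\bR$.

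The principal obstacles are the polarization--duality step in \eqref{E:NO} -- since $L^{-1,-1}_\fg$ is not literally a vector--space complement to $\fp_\infty + \bar\fp_\infty$, the identification must proceed through the nondegenerate pairing $Q_\fg \colon L^{-1,-1}_\fg \times \fg_\bC/(\fp_\infty + \bar\fp_\infty) \to \bC$ -- and the assertion that every codimension--one boundary orbit is polarized, which rests on the detailed Matsuki--correspondence arguments of \cite{KP2013}.
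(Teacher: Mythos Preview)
The paper does not supply its own proof of Theorem~\ref{T:KP}; the result is quoted from Kerr--Pearlstein \cite{KP2013} and then used (for example in the proof of Proposition~\ref{P:codim1}).  So there is no in-paper argument to compare against.

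On its own merits your sketch is sound for \eqref{E:NO} and \eqref{E:codim}.  The identification $N_{\varrho(\varphi)}\cO\otimes\bC \cong \fg_\bC/(\fp_\infty+\bar\fp_\infty)$ is the standard one for real orbits in a complex flag variety, and with $\fp_\infty=\oplus_{s\le0}I^{\sb,s}_\fg$, $\bar\fp_\infty=\oplus_{r\le0}I^{r,\sb}_\fg$ the natural Deligne-splitting complement is $\oplus_{p,q>0}I^{p,q}_\fg$.  You are right that this is the \emph{positive} block, not the negative one in the stated \eqref{E:NO}; your $Q_\fg$--duality step gives a canonical isomorphism between the two and in particular yields the codimension formula \eqref{E:codim}.  (Whether Kerr--Pearlstein intend \eqref{E:NO} as a conormal identification or via this duality is a question for \cite{KP2013}, not for this paper.)

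For the codimension-one assertions you correctly separate the two issues.  Existence of a polarized codimension-one orbit does follow from Theorem~\ref{T:cPMHS} as you indicate (take $\fl^\tss_\bR$ of rank one with simple root $\s_i$, $i\in I(\fp)$; then $I^{-1,-1}_\fg=\fg^{-\s_i}$ is the only nonzero summand with both indices negative).  That \emph{every} codimension-one boundary orbit is polarized is genuinely the content of \cite{KP2013}, and you are right not to try to reprove it here.  Your argument for \eqref{E:codim1N} is correct: $\dim I^{-p,-q}=\dim I^{-q,-p}$ forces $p=q$, the resulting one-dimensional $I^{-p,-p}_\fg$ lies outside $\fh'$ (it has nonzero $Y$--eigenvalue) and is $\fh'$--stable, hence a root space, and conjugation-stable since $\overline{I^{-p,-p}}=I^{-p,-p}$.
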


\noindent  Recall the set $I(\fp) = \{ i \ | \ \fg^{-\s_i} \not\subset \fp\} = \{ i \ | \ \s_i(\ttE) = 1 \}$ of \eqref{E:I}.  We will see that Theorems \ref{T:cPMHS} and \ref{T:KP} yield

\begin{proposition}\label{P:codim1}
The boundary $\tbd(D) \subset \check D$ contains exactly $|I(\fp)|$ codimension one $G_\bR$--orbits.  
\end{proposition}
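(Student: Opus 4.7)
The plan is to combine Theorems \ref{T:cPMHS} and \ref{T:KP}.  By Theorem \ref{T:KP} every codimension one orbit in $\tbd(D)$ is polarized, so the surjection \eqref{E:sur} together with Theorem \ref{T:cPMHS}(a) parameterize such orbits by those $\sW^0$-conjugacy classes $[\fl_\bR]\in\Lambda_{\varphi,\ft}$ for which the codimension $\tdim_\bC\bigoplus_{p,q>0}I^{p,q}_\fg$ equals one.  Via \eqref{E:Ipq}, the latter quantity is the number of positive roots $\a$ satisfying $\a(\ttE_\varphi)>0$ and $\a(\sZ)>\a(\ttE_\varphi)$, so the task reduces to counting those $[\fl_\bR]$ that yield a unique such $\a$.

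For each $i\in I(\fp)$ I define $\fl_i:=\fh\oplus\fg^{\s_i}\oplus\fg^{-\s_i}$ and claim $[\fl_i]\in\Lambda_{\varphi,\ft}$ gives a codimension one orbit.  Membership in $\cL_{\varphi,\ft}$ is immediate: $\ft\subset\fh\subset\fl_i$, and since $i\in I(\fp)$ ensures $\s_i(\ttE_\varphi)=1$, the projection $\pi^\tss_{\fl_i}(\ttE_\varphi)=\tfrac{1}{2}\ttH^{\s_i}$, so $\sZ=\ttH^{\s_i}$ is the canonical distinguished element of $\fl_i^\tss\cong\fsl_2$.  Writing a would-be contributing positive root as $\a=\sum_j c_j\s_j$, the inequality $\a(\ttE_\varphi)\geq c_i$ (because $i\in I(\fp)$) forces $\a(\ttH^{\s_i})>c_i$; the $\s_i$-coefficient $c_i-\a(\ttH^{\s_i})$ of the root $s_i(\a)=\a-\a(\ttH^{\s_i})\s_i$ is therefore strictly negative, so $s_i(\a)$ is a negative root, which forces $c_j=0$ for all $j\neq i$ and hence $\a=\s_i$.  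Thus the codimension is exactly one.

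The classes $[\fl_i]$ for distinct $i\in I(\fp)$ are pairwise distinct in $\Lambda_{\varphi,\ft}$: the Weyl group $\sW^0$ is generated by simple reflections $s_k$ with $k\notin I(\fp)$, each of which preserves the $\s_j$-coefficient of any root for every $j\in I(\fp)$; hence no $w\in\sW^0$ can carry $\s_i$ to $\s_{i'}$ when $i\neq i'$ both lie in $I(\fp)$.  Combined with the identification \eqref{E:codim1N} of a codimension one orbit by the real root labeling its normal space, this shows the polarized orbits associated with distinct $[\fl_i]$ are themselves distinct.

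The main obstacle is the completeness step: showing no other $[\fl_\bR]\in\Lambda_{\varphi,\ft}$ yields codimension one.  Theorem \ref{T:cPMHS}(e) places $\fl_\bC\subset\bigoplus I^{p,p}_\fg$, so each positive-$\ttE_\varphi$ root of $\fl_\bC^\tss$ already contributes to the codimension; combined with the distinguished condition on $\sZ$, the codimension one hypothesis forces $\fl_\bC^\tss=\fsl_2(\b)$ for a root $\b$ with $\b(\ttE_\varphi)=1$, with $\b$ itself the unique contributing root.  It remains to show that $\b$ must be $\sW^0$-conjugate to some $\s_i$, $i\in I(\fp)$; this is the delicate step, requiring a combinatorial analysis of the $\sW^0$-action on the level-one roots to demonstrate that any other choice of $\b$ would force additional roots outside $\fl_\bC$ to satisfy the codimension-contributing inequalities, contradicting the codimension one hypothesis.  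Combining these steps identifies the codimension one orbits with $\{[\fl_i]\}_{i\in I(\fp)}$ and gives the count $|I(\fp)|$.
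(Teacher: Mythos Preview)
Your overall strategy coincides with the paper's: parameterize codimension one orbits by those $[\fl_\bR]\in\Lambda_{\varphi,\ft}$ for which \eqref{E:codim} equals one, show that the rank--one Levis $\fl_i$ built from $\s_i$ ($i\in I(\fp)$) all give codimension one, and then argue that nothing else can.  Your root--theoretic verification that each $[\fl_i]$ yields codimension one is clean and self--contained (the paper instead cites \cite[Lemma~6.52]{KR1} for this), and your observation that $\sW^0$ cannot identify $\s_i$ with $\s_{i'}$ for distinct $i,i'\in I(\fp)$ matches the paper's.

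The genuine gap is in your final paragraph.  You correctly reduce the completeness step to showing that the unique positive root $\b$ of $\fl_\bC^\tss$ must be $\sW^0$--conjugate to some $\s_i$ with $i\in I(\fp)$, but you then only \emph{describe} a strategy (``a combinatorial analysis of the $\sW^0$--action on the level--one roots'') rather than carry it out.  This step is not automatic: as Example~\ref{eg:B2P1} shows, there exist level--one roots (e.g.\ $\s_1+\s_2$ in $B_2$ with $\ttE_\varphi=\ttS^1$) that are \emph{not} $\sW^0$--conjugate to any simple root, and the corresponding rank--one Levi does lie in $\cL_{\varphi,\ft}$; it is only the codimension computation that rules it out (there $\tcodim\,\cO=3$).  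So your asserted contradiction is correct in spirit but requires an actual argument.  The paper sidesteps this by invoking the Kerr--Pearlstein identification \eqref{E:codim1N} together with \eqref{E:NYN} and \eqref{E:NO} to conclude directly that $I^{-1,-1}_\fg=\fg^{-\a}$ with $\a=\s_i$ for some $i\in I(\fp)$, leaning on the cited results in \cite{KP2013}; if you wish to remain self--contained you will need to supply the missing root combinatorics explicitly.

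A secondary point: your appeal to \eqref{E:codim1N} to separate the orbits coming from distinct $[\fl_i]$ presumes that the normal root is an invariant of the orbit, which is not quite what \eqref{E:codim1N} says.  The paper's logic here is that the preimage (under \eqref{E:sur}) of the codimension--one orbits is exactly $\{[\fl_i]:i\in I(\fp)\}$; once that is established, the surjectivity of \eqref{E:sur} and the distinctness of the $[\fl_i]$ in $\Lambda_{\varphi,\ft}$ give the count.
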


\begin{remark}
In the case that $P$ is maximal (equivalently, $|I(\fp)| = 1$), Proposition \ref{P:codim1} was proven in \cite{KR1}.
\end{remark}

\begin{proof}
From \eqref{E:NYN}, \eqref{E:NO} and \eqref{E:codim1N} we see that $N$ spans $I^{-1,-1}_\fg = \fg^{-\a}$ when $\varrho(\varphi)$ lies in a codimension one $G_\bR$--orbit.  It follows that the Levi subalgebra $\fl_\bC$ of Theorem \ref{T:cPMHS} has rank one, and the semisimple factor is the $\fsl_2\bC \subset \fg_\bC$ with simple root $\{\a\}$.  In particular, $\a = \s_i$ for some $i \in I(\fp)$.  Whence, $\tbd(D)$ contains at most $|I(\fp)|$ codimension one orbits.

Since no two $\s_i$, with $i \in I(\fp)$, are congruent under the Weyl group $\sW^0$ of $\fg^0$, in order to see that equality holds we must show that every $i \in I(\fp)$ yields a codimension one orbit.  Let $\fl_\bC$ be the rank one Levi subalgebra with simple root $\s_i$.  Then $\fl_\bR = \fl_\bC \cap \fg_\bR \in \cL_{\varphi,\ft}$.  Moreover, in this case $\sZ = \ttH^i$, where $\{ \ttH^j \}_{j=1}^r$ is the basis of $\fh$ dual to the fundamental weights.  The fact that the $G_\bR$--orbit through $\varrho(\varphi)$ has codimension one is \cite[Lemma 6.52]{KR1}.
\end{proof}


\subsection{Examples} \label{S:egPMHS} 

Suppose that $D$ is a Mumford--Tate domain for a Hodge representation $(V_\bR,Q,\varphi)$ of $G_\bR$.  In the examples that follow we use Theorem \ref{T:cSL2} to enumerate the set $\Upsilon_D$ of horizontal $\tSL(2)$s on $D$ (modulo the action of $G_\bR$).  More precisely, given $[\upsilon] \in \Upsilon_D$, let $[\fl_\bR] \in \Lambda_{\ft,\varphi}$ be the corresponding conjugacy class under Theorem \ref{T:cSL2}; and let $[F^\sb,N] \in \Psi_D$ be the corresponding (conjugacy class of) nilpotent orbit under Theorem \ref{T:cPMHS}.  We will:
\begin{numlist}
\item  Identify a representative of $[\fl_\bR]$ by describing the simple roots $\sS'$ of $\fl_\bC$ as a subset of the roots $\Delta$ of $\fg_\bC$.  The Levis $\fl_\bR$ of $\cL_{\varphi,\ft}$ are identified as follows.  As discussed in Remark \ref{R:levi}, the Levi subalgebras $\fl_\bC$ of $\fg_\bC$ that contain $\fh = \ft \ot \bC$ are in bijection with the subsets $\{ w \sS_0 \ | \ w \in \sW \,,\ \sS_0 \subset \sS \}$.  This is a finite collection of subsets.  For each subset we consider the corresponding Levi $\fl$ and compute $\sZ = \pi^\tss_\fl(\ttE_\varphi)$, \cf~Remark \ref{R:Z}.  We then compute the $\sZ$--eigenspace decomposition of $\fl^\tss_\bC$ to determine whether or not $\sZ$ is a distinguished element of $\fl^\tss_\bC$, \cf\eqref{E:distL}.  
\item
Compute the codimension \eqref{E:codim} of the $G_\bR$--orbit $\cO$ polarized by $[F^\sb,N]$.
\item 
Determine the Deligne splitting \eqref{E:Ipq} of the $\bR$--split polarized mixed Hodge structure $(F^\sb,N)$.  The splittings will be depicted by pictures in the $pq$--plane that place a $\sb$ at the point $(p,q)$ if $I^{p,q}\not=0$.   When considering those pictures, keep in mind that $N \in I^{-1,-1}_\fg$, so that $N : I^{p,q} \to I^{p-1,q-1}$.
\end{numlist}
The Hodge diamond may fail to distinguish two distinct conjugacy classes in $\Upsilon_D$; see Remarks \ref{R:egD4P2} and \ref{R:egA2B}(a).

Throughout $(i) \in \sW$ will denote the simple reflection in the hyperplane $\s_i^\perp \subset \fh^*$.

\begin{example}[Period domain for $\bh = (1,3,1)$] \label{eg:B2P1}
We have $G_\bR = \tSO(3,2)^\circ$ and $\ttE_\varphi = \ttS^1$ so that $\sW^0 = \{ \one , (2) \}$.  In this case $D$ is Hermitian symmetric and all the $G_\bR$--orbits of $\tbd(D)$ are polarized.  Applying Theorem \ref{T:cSL2}, we find that $\Upsilon_D$ consists of two elements: 
\begin{center}
\begin{footnotesize}
\setlength{\unitlength}{10pt}
\begin{picture}(10,4)
\put(0,0){\vector(1,0){3}} \put(0,0){\vector(0,1){3}}  \put(0,1){\circle*{0.3}} \put(1,0){\circle*{0.3}} \put(1,1){\circle*{0.3}} \put(1,2){\circle*{0.3}} \put(2,1){\circle*{0.3}}
\put(4,2.4){$\sS' = \{ \s_1 \}$}
\put(4,1.2){$\sZ = 2\ttS^1 - \ttS^2$}
\put(4,0){$\tcodim\,\cO = 1$}
\end{picture}
\hsp{30pt}
\begin{picture}(10,4)
\put(0,0){\vector(1,0){3}} \put(0,0){\vector(0,1){3}}  \put(0,0){\circle*{0.3}} \put(1,1){\circle*{0.3}} \put(2,2){\circle*{0.3}}
\put(4,2.4){$\sS' = \{\s_1+\s_2\}$}
\put(4,1.2){$\sZ = 2\,\ttS^1$}
\put(4,0){$\tcodim\,\cO = 3$}
\end{picture}
\end{footnotesize}
\end{center}
In both cases $\fl_\bR^\tss = \fsu(1,1)$.
\end{example}

\begin{example}[Period domain for $\bh = (2,1,2)$] \label{eg:B2P2}
We have $G_\bR = \tSO(1,4)^\circ$ and $\ttE_\varphi = \ttS^2$ so that $\sW^0 = \{ \one , (1) \}$.  Applying Theorem \ref{T:cSL2}, we find that $\Upsilon_D$ consists of a single element:
\begin{center}
\begin{footnotesize}
\setlength{\unitlength}{10pt}
\begin{picture}(10,4.5)(0,-0.5)
\put(0,0){\vector(1,0){3}} \put(0,0){\vector(0,1){3}}
\put(0,0){\circle*{0.3}} \put(0,2){\circle*{0.3}} \put(1,1){\circle*{0.3}} \put(2,0){\circle*{0.3}} \put(2,2){\circle*{0.3}}
\put(4,3.1){$\fl_\bR^\tss = \fsu(1,1)$}
\put(4,1.9){$\sS' = \{\s_2\}$}
\put(4,0.7){$\sZ = -2\ttS^1 + 2\ttS^2$}
\put(4,-0.5){$\tcodim\,\cO = 1$}
\end{picture}
\end{footnotesize}
\end{center}
Moreover, from \cite[Lemma III.20]{GGR} we may deduce that the codimension--one polarized orbit $\cO$ is closed, giving an example of a closed orbit that is polarized, but not by a Hodge--Tate degeneration. 
\end{example}

\begin{example}[Period domain for $\bh = (1,1,1,1,1)$] \label{eg:B2B}
We have $G_\bR = \tSO(3,2)$ and $\ttE_\varphi = \ttS^1+\ttS^2$ so that $\sW^0 = \{ \one \}$.  Applying Theorem \ref{T:cSL2}, we find that $\Upsilon_D$ consists of three elements: 
\begin{center}
\begin{footnotesize}
\setlength{\unitlength}{10pt}
\begin{picture}(12,6)
\put(0,0){\vector(1,0){5}} \put(0,0){\vector(0,1){5}}  \put(0,3){\circle*{0.3}} \put(1,4){\circle*{0.3}} \put(2,2){\circle*{0.3}} \put(3,0){\circle*{0.3}} \put(4,1){\circle*{0.3}}
\put(6,4.0){$\fl_\bR^\tss = \fu(1,1)$}
\put(6,2.8){$\sS' = \{ \s_1 \}$}
\put(6,1.6){$\sZ = 2\ttS^1 - \ttS^2$}
\put(6,0.4){$\tcodim\,\cO = 1$}
\end{picture}
\hsp{20pt}
\begin{picture}(11,6)
\put(0,0){\vector(1,0){5}} \put(0,0){\vector(0,1){5}}  \put(0,4){\circle*{0.3}} \put(1,1){\circle*{0.3}} \put(2,2){\circle*{0.3}} \put(3,3){\circle*{0.3}} \put(4,0){\circle*{0.3}}
\put(5,4.0){$\fl_\bR^\tss = \fu(1,1)$}
\put(5,2.8){$\sS' = \{ \s_2 \}$}
\put(5,1.6){$\sZ = -2\ttS^1 + 2 \ttS^2$}
\put(5,0.4){$\tcodim\,\cO = 1$}
\end{picture}
\hsp{20pt}
\begin{picture}(13,6)
\put(0,0){\vector(1,0){5}} \put(0,0){\vector(0,1){5}}  \put(0,0){\circle*{0.3}} \put(1,1){\circle*{0.3}} \put(2,2){\circle*{0.3}} \put(3,3){\circle*{0.3}} \put(4,4){\circle*{0.3}}
\put(6,4.8){$\fl_\bR^\tss = \fg_\bR$}
\put(6,3.7){$\sS' = \{\s_1,\s_2\}$}
\put(6,2.6){$\sZ = 2\,\ttS^1 + 2\,\ttS^2$}
\put(6,1.5){$\tcodim\,\cO = 4$}
\end{picture}
\end{footnotesize}
\end{center}
\end{example}

\begin{example}[$G_\bR = \tSU(2,1)$ and $\check D = \tFlag_{1,2}\bC^3$] \label{eg:A2B}
We have $\ttE_\varphi = \ttS^1+\ttS^2$, and consider the Mumford--Tate domain $D \subset \check D$ for the Hodge representation $(\fg_\bR,Q_\fg,\tAd,\varphi)$.  This domain is well studied; indeed, it is known that $\tbd(D)$ contains three $G_\bR$--orbits, all of which are polarized, \cf\cite{\GGKtcu , KP2013}.  We have $\sW^0 = \{\one\}$.  Applying Theorem \ref{T:cPMHS}, we find that $\Upsilon_D$ consists of three elements; the corresponding data are: 
\begin{center}
\setlength{\unitlength}{10pt}
\begin{footnotesize}
\begin{picture}(11,6)(-3,-3)
\put(-2.5,0){\vector(1,0){5}} \put(0,-2.5){\vector(0,1){5}}
 \put(-2,1){\circle*{0.3}} \put(-1,-1){\circle*{0.3}} \put(-1,2){\circle*{0.3}} \put(0,0){\circle*{0.3}} \put(1,-2){\circle*{0.3}} \put(1,1){\circle*{0.3}} \put(2,-1){\circle*{0.3}}
\put(3.5,2){$\fl_\bR^\tss = \fsu(1,1)$}
\put(3.5,0.65){$\sS' = \{\s_1\}$}
\put(3.5,-0.65){$\sZ = 2\,\ttS^1 - \ttS^2$}
\put(3.5,-2){$\tcodim\,\cO = 1$}
\end{picture}
\hsp{20pt}
\begin{picture}(11,6)(-3,-3)
\put(-2.5,0){\vector(1,0){5}} \put(0,-2.5){\vector(0,1){5}}
 \put(-2,1){\circle*{0.3}} \put(-1,-1){\circle*{0.3}} \put(-1,2){\circle*{0.3}} \put(0,0){\circle*{0.3}} \put(1,-2){\circle*{0.3}} \put(1,1){\circle*{0.3}} \put(2,-1){\circle*{0.3}}
\put(3.5,2){$\fl_\bR^\tss = \fsu(1,1)$}
\put(3.5,0.65){$\sS' = \{\s_2\}$}
\put(3.5,-0.65){$\sZ = -\ttS^1 + 2\,\ttS^2$}
\put(3.5,-2){$\tcodim\,\cO = 1$}
\end{picture}
\hsp{20pt}
\begin{picture}(11,6)(-3,-3)
\put(-2.5,0){\vector(1,0){5}} \put(0,-2.5){\vector(0,1){5}}
\put(-2,-2){\circle*{0.3}} \put(-1,-1){\circle*{0.3}} \put(0,0){\circle*{0.3}} \put(1,1){\circle*{0.3}} \put(2,2){\circle*{0.3}}
\put(3.5,2){$\fl_\bR^\tss = \fsu(2,1)$}
\put(3.5,0.65){$\sS' = \{\s_1,\s_2\}$}
\put(3.5,-0.65){$\sZ = 2\,\ttS^1 + 2\, \ttS^2$}
\put(3.5,-2){$\tcodim\,\cO = 3$}
\end{picture}
\end{footnotesize}
\end{center}

\begin{remark} \label{R:egA2B}
   (a)
Observe that the first two Hodge diamonds in Example \ref{eg:A2B} are identical; in particular, they fail to distinguish the two distinct $G_\bR$--conjugacy classes of horizontal $\tSL(2)$s.
   
   (b)
Moreover, while the two nilpotent elements $N \in \tNilp(\fg_\bR)$ of these examples lie in the same $\tAd(G_\bC)$--orbit (the minimal orbit $\cN_\tmin$), they lie in distinct $\tAd(G_\bR)$--orbits.  This may be seen by computing the invariants $(\gamma(\sZ);\a'(\sZ))$ of \S\ref{S:rtno_R}, and observing that they differ.  For this, we work with the simple roots $\tilde\sS = (1)\sS = \{ -\s_1 , \s_1+\s_2 \}$.  Then $\tilde\sS_\fk = \{ \s_1+\s_2\}$ is a set of simple roots for $\fk_\bC = \fgl_2\bC$, and the noncompact root is $\a' = -\s_1$.  In both cases the compact characteristic vector satisfies
\[
  \gamma(\sZ) \ = \ \left( (\s_1+\s_2)(\sZ) \right) \ = \ (1) \,;
\]
however, in the first example we have $\a'(\sZ) = -2$, while in the second we have $\a'(\sZ) = 1$.
\end{remark}
\end{example}

\begin{example}[$G_\bR = G_2 \subset \tSO(3,4)$ and $\bh = (1,2,1,2,1)$] \label{eg:G2P1}
We have $\ttE_\varphi = \ttS^1$ and $G_\bR \subset \tSO(3,4)$, and consider the Mumford--Tate domain $D \subset \check D$ for the Hodge representation on $V_\bR = \bR^7$ with Hodge numbers $\bh = (1,2,1,2,1)$.  Kerr and Pearlstein have shown that $\tbd(D)$ contains three $G_\bR$--orbits, only one of which is polarized \cite[\S6.1.3]{KP2013}.

Here $\sW^0 = \{ \one , (2) \}$.  Applying Theorem \ref{T:cSL2}, we find that $\Upsilon_D$ consists of a single element:   
\begin{center}
\begin{footnotesize}
\setlength{\unitlength}{10pt}
\begin{picture}(15,5.5)
\put(0,0){\vector(1,0){5}} \put(0,0){\vector(0,1){5}}  
\put(0,3){\circle*{0.3}} \put(1,1){\circle*{0.3}} \put(1,4){\circle*{0.3}} \put(2,2){\circle*{0.3}} \put(3,0){\circle*{0.3}} \put(3,3){\circle*{0.3}} \put(4,1){\circle*{0.3}}
\put(6,4.1){$\fl_\bR^\tss = \fsu(1,1)$}
\put(6,2.9){$\sS' = \{\s_1\}$}
\put(6,1.7){$\sZ = 2\,\ttS^1 - 3\,\ttS^2$}
\put(6,0.5){$\tcodim\,\cO = 1$}
\end{picture}
\end{footnotesize}
\end{center}
\end{example}

\begin{example}[$G_\bR = G_2 \subset \tSO(3,4)$ and $\bh = (2,3,2)$] \label{eg:G2P2}
We have $\ttE_\varphi = \ttS^2$ and $G_\bR \subset \tSO(3,4)$, and consider the Mumford--Tate domain $D \subset \check D$ for the Hodge representation on $V_\bR = \bR^7$ with Hodge numbers $\bh = (2,3,2)$.  Kerr and Pearlstein have shown that $\tbd(D)$ contains three $G_\bR$--orbits, all of which are polarized \cite[\S6.1.3]{KP2013}.

Here $\sW^0 = \{ \one , (1) \}$.  Applying Theorem \ref{T:cSL2}, we find that $\Upsilon_D$ consists of three elements: 
\begin{center}
\begin{footnotesize}
\setlength{\unitlength}{10pt}
\begin{picture}(11,4.5)
\put(0,0){\vector(1,0){3.5}} \put(0,0){\vector(0,1){3.5}}  
\put(0,1){\circle*{0.3}} \put(0,2){\circle*{0.3}} \put(1,0){\circle*{0.3}} \put(1,1){\circle*{0.3}} \put(1,2){\circle*{0.3}} \put(2,0){\circle*{0.3}} \put(2,1){\circle*{0.3}}
\put(4,3.6){$\fl_\bR^\tss = \fsu(1,1)$}
\put(4,2.4){$\sS' = \{\s_2\}$}
\put(4,1.2){$\sZ = -\ttS^1 + 2\,\ttS^2$}
\put(4,0){$\tcodim\,\cO = 1$}
\end{picture}
\hsp{30pt}
\begin{picture}(11,4.5)
\put(0,0){\vector(1,0){3.5}} \put(0,0){\vector(0,1){3.5}}  
\put(0,0){\circle*{0.3}} \put(0,1){\circle*{0.3}} \put(1,0){\circle*{0.3}} \put(1,1){\circle*{0.3}} \put(1,2){\circle*{0.3}} \put(2,1){\circle*{0.3}} \put(2,2){\circle*{0.3}}
\put(4,3.6){$\fl_\bR^\tss = \fsu(1,1)$}
\put(4,2.4){$\sS' = \{\s_1+\s_2\}$}
\put(4,1.2){$\sZ = -\ttS^1 + 3\,\ttS^2$}
\put(4,0){$\tcodim\,\cO = 3$}
\end{picture}
\hsp{30pt}
\begin{picture}(11,4.5)
\put(0,0){\vector(1,0){3.5}} \put(0,0){\vector(0,1){3.5}}  
\put(0,0){\circle*{0.3}} \put(1,1){\circle*{0.3}} \put(2,2){\circle*{0.3}}
\put(4,3.6){$\fl_\bR^\tss = \fg_\bR$}
\put(4,2.4){$\sS' = \{\s_1,\s_2\}$}
\put(4,1.2){$\sZ = 2\,\ttS^2$}
\put(4,0){$\tcodim\,\cO = 5$}
\end{picture}
\end{footnotesize}
\end{center}
\end{example}

\begin{example}[$G_\bR = G_2 \subset \tSO(3,4)$ and $\bh = (1,1,1,1,1,1,1)$] \label{eg:G2B}
We have $\ttE_\varphi = \ttS^1+\ttS^2$ and $G_\bR \subset \tSO(3,4)$, and consider the Mumford--Tate domain $D \subset \check D$ for the Hodge representation on $V_\bR = \bR^7$ with Hodge numbers $\bh = (1,1,1,1,1,1,1)$.  Kerr and Pearlstein have shown that $\tbd(D)$ contains seven $G_\bR$--orbits, three of which are polarizable \cite[\S6.1.3]{KP2013}.

Here $\sW^0 = \{\one\}$.  Applying Theorem \ref{T:cSL2}, we find that $\Upsilon_D$ consists of three elements; they are given by: 
\begin{center}
\begin{footnotesize}
\setlength{\unitlength}{8pt}
\begin{picture}(14,8)
\put(0,0){\vector(1,0){7}} \put(0,0){\vector(0,1){7}}  
\put(0,6){\circle*{0.4}} \put(1,4){\circle*{0.4}} \put(2,5){\circle*{0.4}} \put(3,3){\circle*{0.4}} \put(4,1){\circle*{0.4}} \put(5,2){\circle*{0.4}} \put(6,0){\circle*{0.4}}
\put(7,6.5){$\fl_\bR^\tss = \fsu(1,1)$}
\put(7,5.0){$\sS' = \{\s_2\}$}
\put(7,3.5){$\sZ = -\ttS^1 + 2\,\ttS^2$}
\put(7,2){$\tcodim\,\cO = 1$}
\end{picture}
\hspace{30pt}
\begin{picture}(14,8)
\put(0,0){\vector(1,0){7}} \put(0,0){\vector(0,1){7}}  
\put(0,5){\circle*{0.4}} \put(1,6){\circle*{0.4}} \put(2,2){\circle*{0.4}} \put(3,3){\circle*{0.4}} \put(4,4){\circle*{0.4}} \put(5,0){\circle*{0.4}} \put(6,1){\circle*{0.4}}
\put(7,6.5){$\fl_\bR^\tss = \fsu(1,1)$}
\put(7,5.0){$\sS' = \{\s_1\}$}
\put(7,3.5){$\sZ = 2\,\ttS^1 - 3\,\ttS^2$}
\put(7,2.0){$\tcodim\,\cO = 1$}
\end{picture}
\hspace{30pt}
\begin{picture}(15,8)
\put(0,0){\vector(1,0){7}} \put(0,0){\vector(0,1){7}}  \put(0,0){\circle*{0.4}} \put(1,1){\circle*{0.4}} \put(2,2){\circle*{0.4}} \put(3,3){\circle*{0.4}} \put(4,4){\circle*{0.4}} \put(5,5){\circle*{0.4}} \put(6,6){\circle*{0.4}}
\put(8,5.5){$\fl_\bR^\tss = \fg_\bR$}
\put(8,4.0){$\sS' = \{\s_1,\s_2\}$}
\put(8,2.5){$\sZ = 2\,\ttS^1 + 2\,\ttS^2$}
\put(8,1){$\tcodim\,\cO = 6$}
\end{picture}
\end{footnotesize}
\end{center}
\end{example}

\begin{example}[Period domain for $\bh = (1,2,2,1)$] \label{eg:C3P13}
We have $G_\bR = \tSp(3,\bR)$ and $\ttE_\varphi = \ttS^1 + \ttS^3$.  In this case $\sW^0 = \{\one , (2) \}$. Applying Theorem \ref{T:cSL2} we find that $\Upsilon_D$ contains seven elements.  The corresponding data is:
\begin{center}
\begin{footnotesize}
\setlength{\unitlength}{10pt}
\begin{picture}(11,5)
\put(0,0){\vector(1,0){4}} \put(0,0){\vector(0,1){4}}  
\put(0,3){\circle*{0.3}} \put(1,1){\circle*{0.3}} \put(1,2){\circle*{0.3}} \put(2,1){\circle*{0.3}} \put(2,2){\circle*{0.3}} \put(3,0){\circle*{0.3}}
\put(4.5,3.9){$\fl_\bR^\tss = \fsu(1,1)$}
\put(4.5,2.6){$\sS' = \{ \s_3 \}$}
\put(4.5,1.3){$\sZ = -\ttS^2 + 2\ttS^3$}
\put(4.5,0){$\tcodim\cO = 1$}
\end{picture}
\hsp{10pt}
\begin{picture}(11,5)
\put(0,0){\vector(1,0){4}} \put(0,0){\vector(0,1){4}}  
\put(0,2){\circle*{0.3}} \put(1,2){\circle*{0.3}} \put(1,3){\circle*{0.3}} \put(2,0){\circle*{0.3}} \put(2,1){\circle*{0.3}} \put(3,1){\circle*{0.3}}
\put(4.5,3.9){$\fl_\bR^\tss = \fsu(1,1)$}
\put(4.5,2.6){$\sS' = \{ \s_1 \}$}
\put(4.5,1.3){$\sZ = 2\ttS^1 - \ttS^2$}
\put(4.5,0){$\tcodim\cO = 1$}
\end{picture}
\hsp{10pt}
\begin{picture}(11,5)
\put(0,0){\vector(1,0){4}} \put(0,0){\vector(0,1){4}}  
\put(0,3){\circle*{0.3}} \put(1,1){\circle*{0.3}} \put(2,2){\circle*{0.3}} \put(3,0){\circle*{0.3}}
\put(4.5,3.9){$\fl_\bR^\tss = \fsu(1,1)$}
\put(4.5,2.6){$\sS' = \{ \s_2 + \s_3 \}$}
\put(4.5,1.3){$\sZ = -\ttS^1 + 2\ttS^3$}
\put(4.5,0){$\tcodim\cO = 3$}
\end{picture}
\vspace{10pt}\\
\begin{picture}(14,5)
\put(0,0){\vector(1,0){4}} \put(0,0){\vector(0,1){4}}  
\put(0,2){\circle*{0.3}} \put(1,1){\circle*{0.3}} \put(1,3){\circle*{0.3}} \put(2,0){\circle*{0.3}} \put(2,2){\circle*{0.3}} \put(3,1){\circle*{0.3}}
\put(4.5,3.9){$\fl_\bR^\tss = \fsu(1,1) \times \fsu(1,1)$}
\put(4.5,2.6){$\sS' = \{ \s_1 \,,\, \s_3 \}$}
\put(4.5,1.3){$\sZ = 2\ttS^1-2\ttS^2+2\ttS^3$}
\put(4.5,0){$\tcodim\cO = 2$}
\end{picture}
\hspace{30pt}
\begin{picture}(14,5)
\put(0,0){\vector(1,0){4}} \put(0,0){\vector(0,1){4}}  
\put(0,1){\circle*{0.3}} \put(1,0){\circle*{0.3}} \put(1,2){\circle*{0.3}} \put(2,1){\circle*{0.3}} \put(2,3){\circle*{0.3}} \put(3,2){\circle*{0.3}}
\put(4.5,3.9){$\fl_\bR^\tss = \fsu(2,1)$}
\put(4.5,2.6){$\sS' = \{ \s_1\,,\, \s_2+\s_3 \}$}
\put(4.5,1.3){$\sZ = 2\ttS^1-2\ttS^2+4\ttS^3$}
\put(4.5,0){$\tcodim\cO = 5$}
\end{picture}
\vspace{10pt}\\
\begin{picture}(14,5)
\put(0,0){\vector(1,0){4}} \put(0,0){\vector(0,1){4}}  
\put(0,0){\circle*{0.3}} \put(1,1){\circle*{0.3}} \put(1,2){\circle*{0.3}} \put(2,1){\circle*{0.3}} \put(2,2){\circle*{0.3}} \put(3,3){\circle*{0.3}}
\put(4.5,3.9){$\fl_\bR^\tss = \fsp(2,\bR)$}
\put(4.5,2.6){$\sS' = \{ \s_1+\s_2\,,\,\s_3 \}$}
\put(4.5,1.3){$\sZ = 3\ttS^1-\ttS^2+2\ttS^3$}
\put(4.5,0){$\tcodim\cO = 6$}
\end{picture}
\hspace{30pt}
\begin{picture}(14,5)
\put(0,0){\vector(1,0){4}} \put(0,0){\vector(0,1){4}}  
\multiput(0,0)(1,1){4}{\circle*{0.3}}
\put(4.5,3.9){$\fl_\bR^\tss = \fg_\bR$}
\put(4.5,2.6){$\sS' = \{ \s_1\,,\,\s_2\,,\,\s_3 \}$}
\put(4.5,1.3){$\sZ = 2\ttS^1+2\ttS^3$}
\put(4.5,0){$\tcodim\cO = 8$}
\end{picture}
\end{footnotesize}
\end{center}
\end{example}

\begin{example}[Period domain for $\bh = (2,1,1,2)$] \label{eg:C3P23}
We have $G_\bR = \tSp(3,\bR)$ and $\ttE_\varphi = \ttS^2 + \ttS^3$.  In this case $\sW^0 = \{\one , (1) \}$. Applying Theorem \ref{T:cPMHS} we find that there are three (conjugacy classes of) horizontal $\tSL(2)$s on the period domain $D$.  The corresponding data is:
\begin{center}
\begin{footnotesize}
\setlength{\unitlength}{10pt}
\begin{picture}(11,4.5)
\put(0,0){\vector(1,0){4}} \put(0,0){\vector(0,1){4}}  
\put(0,3){\circle*{0.3}} \put(1,1){\circle*{0.3}} \put(2,2){\circle*{0.3}} \put(3,0){\circle*{0.3}}
\put(4.5,3.9){$\fl_\bR^\tss = \fsu(1,1)$}
\put(4.5,2.6){$\sS' = \{ \s_3 \}$}
\put(4.5,1.3){$\sZ = -\ttS^2+2\ttS^3$}
\put(4.5,0){$\tcodim\cO = 1$}
\end{picture}
\hsp{10pt}
\begin{picture}(13,4.5)
\put(0,0){\vector(1,0){4}} \put(0,0){\vector(0,1){4}}  
\put(0,2){\circle*{0.3}} \put(0,3){\circle*{0.3}} \put(1,3){\circle*{0.3}} 
\put(2,0){\circle*{0.3}} \put(3,0){\circle*{0.3}} \put(3,1){\circle*{0.3}}
\put(4.5,3.9){$\fl_\bR^\tss = \fsu(1,1)$}
\put(4.5,2.6){$\sS' = \{ \s_2 \}$}
\put(4.5,1.3){$\sZ = -\ttS^1+2\ttS^2-2\ttS^3$}
\put(4.5,0){$\tcodim\cO = 1$}
\end{picture}
\hsp{10pt}
\begin{picture}(13,4.5)
\put(0,0){\vector(1,0){4}} \put(0,0){\vector(0,1){4}}  
\put(0,0){\circle*{0.3}} \put(0,3){\circle*{0.3}} \put(1,1){\circle*{0.3}} \put(2,2){\circle*{0.3}} \put(3,0){\circle*{0.3}} \put(3,3){\circle*{0.3}}
\put(4.5,3.9){$\fl_\bR^\tss = \fsp(2,\bR)$}
\put(4.5,2.6){$\sS' = \{ \s_2 , \s_3\}$}
\put(4.5,1.3){$\sZ = -3\ttS^1+2\ttS^2+2\ttS^3$}
\put(4.5,0){$\tcodim\cO = 4$}
\end{picture}
\end{footnotesize}
\end{center}
\end{example}

\begin{example}[Period domain for $\bh = (1,1,1,1,1,1)$] \label{eg:C3B}
We have $G_\bR = \tSp(3,\bR)$ and $\ttE_\varphi = \ttS^1 + \ttS^2 + \ttS^3$.  In this case $\sW^0 = \{\one\}$. Applying Theorem \ref{T:cPMHS} we find that there are seven (conjugacy classes of) horizontal $\tSL(2)$s on the period domain $D$.  The corresponding data is:
\begin{center}
\begin{footnotesize}
\setlength{\unitlength}{8pt}
\begin{picture}(14,6.5)
\put(0,0){\vector(1,0){6}} \put(0,0){\vector(0,1){6}}
\put(0,5){\circle*{0.35}} \put(1,4){\circle*{0.35}} \put(2,2){\circle*{0.35}} \put(3,3){\circle*{0.35}} \put(4,1){\circle*{0.35}} \put(5,0){\circle*{0.35}}
\put(6.5,5){$\fl_\bR^\tss = \fsu(1,1)$}
\put(6.5,3.5){$\sS' = \{ \s_3 \}$}
\put(6.5,2){$\sZ = -\ttS^2+2\ttS^3$}
\put(6.5,0.5){$\tcodim\cO = 1$}
\end{picture}
\hsp{10pt}
\begin{picture}(17,6.5)
\put(0,0){\vector(1,0){6}} \put(0,0){\vector(0,1){6}}
\put(0,5){\circle*{0.35}} \put(1,3){\circle*{0.35}} \put(2,4){\circle*{0.35}} \put(3,1){\circle*{0.35}} \put(4,2){\circle*{0.35}} \put(5,0){\circle*{0.35}}
\put(6.5,5){$\fl_\bR^\tss = \fsu(1,1)$}
\put(6.5,3.5){$\sS' = \{ \s_2 \}$}
\put(6.5,2){$\sZ = -\ttS^1+2\ttS^2-2\ttS^3$}
\put(6.5,0.5){$\tcodim\cO = 1$}
\end{picture}
\hsp{10pt}
\begin{picture}(15,6.5)
\put(0,0){\vector(1,0){6}} \put(0,0){\vector(0,1){6}}
\put(0,4){\circle*{0.35}} \put(1,5){\circle*{0.35}} \put(2,3){\circle*{0.35}} \put(3,2){\circle*{0.35}} \put(4,0){\circle*{0.35}} \put(5,1){\circle*{0.35}}
\put(6.5,5){$\fl_\bR^\tss = \fsu(1,1)$}
\put(6.5,3.5){$\sS' = \{ \s_1 \}$}
\put(6.5,2){$\sZ = 2\ttS^1-\ttS^2$}
\put(6.5,0.5){$\tcodim\cO = 1$}
\end{picture}
\vspace{5pt}\\
\begin{picture}(19,6.5)
\put(0,0){\vector(1,0){6}} \put(0,0){\vector(0,1){6}}
\put(0,4){\circle*{0.35}} \put(1,5){\circle*{0.35}} \put(2,2){\circle*{0.35}} \put(3,3){\circle*{0.35}} \put(4,0){\circle*{0.35}} \put(5,1){\circle*{0.35}}
\put(6.5,5){$\fl_\bR^\tss = \fsu(1,1)\times\fsu(1,1)$}
\put(6.5,3.5){$\sS' = \{ \s_1\,,\,\s_3 \}$}
\put(6.5,2){$\sZ = 2\ttS^1-2\ttS^2+2\ttS^3$}
\put(6.5,0.5){$\tcodim\cO = 2$}
\end{picture}
\hsp{30pt}
\begin{picture}(19,6.5)
\put(0,0){\vector(1,0){6}} \put(0,0){\vector(0,1){6}}
\put(0,3){\circle*{0.35}} \put(1,4){\circle*{0.35}} \put(2,5){\circle*{0.35}} \put(3,0){\circle*{0.35}} \put(4,1){\circle*{0.35}} \put(5,2){\circle*{0.35}}
\put(6.5,5){$\fl_\bR^\tss = \fsu(2,1)$}
\put(6.5,3.5){$\sS' = \{ \s_1\,,\,\s_2 \}$}
\put(6.5,2){$\sZ = 2\ttS^1+2\ttS^2-4\ttS^3$}
\put(6.5,0.5){$\tcodim\cO = 3$}
\end{picture}
\vspace{5pt}\\
\begin{picture}(19,6.5)
\put(0,0){\vector(1,0){6}} \put(0,0){\vector(0,1){6}}
\put(0,5){\circle*{0.35}} \put(1,1){\circle*{0.35}} \put(2,2){\circle*{0.35}} \put(3,3){\circle*{0.35}} \put(4,4){\circle*{0.35}} \put(5,0){\circle*{0.35}}
\put(6.5,5){$\fl_\bR^\tss = \fsp(2,\bR)$}
\put(6.5,3.5){$\sS' = \{ \s_2\,,\,\s_3 \}$}
\put(6.5,2){$\sZ = -3\ttS^1+2\ttS^2+2\ttS^3$}
\put(6.5,0.5){$\tcodim\cO = 4$}
\end{picture}
\hsp{30pt}
\begin{picture}(19,6.5)
\put(0,0){\vector(1,0){6}} \put(0,0){\vector(0,1){6}}
\multiput(0,0)(1,1){6}{\circle*{0.35}}
\put(6.5,5){$\fl_\bR^\tss = \fsu(1,1)$}
\put(6.5,3.5){$\sS' = \{ \s_1\,,\,\s_2\,,\,\s_3 \}$}
\put(6.5,2){$\sZ = 2\ttS^1+2\ttS^2+2\ttS^3$}
\put(6.5,0.5){$\tcodim\cO = 9$}
\end{picture}
\end{footnotesize}
\end{center}
\end{example}

\begin{example}[Period domain for $\bh = (2,4,2)$] \label{eg:D4P2}
We have $G_\bR = \tSO(4,4)^\circ$, $\ttE_\varphi = \ttS^2$ and $\sW^0 = \{ (1) , (3) , (4) \}$.  There are six ($G_\bR$--conjugacy classes of) horizontal $\tSL(2)$s.
\begin{center}
\begin{footnotesize}
\setlength{\unitlength}{10pt}
\begin{picture}(17,4)
\put(0,0){\vector(1,0){3}} \put(0,0){\vector(0,1){3}}  
\put(0,1){\circle*{0.3}} \put(0,2){\circle*{0.3}} \put(1,0){\circle*{0.3}} \put(1,1){\circle*{0.3}} \put(1,2){\circle*{0.3}} \put(2,0){\circle*{0.3}} \put(2,1){\circle*{0.3}}
\put(4,3.1){$\fl_\bR^\tss = \fsu(1,1)$}
\put(4,1.9){$\sS' = \{\s_2\}$}
\put(4,0.7){$\sZ = -\ttS^1+2\ttS^2-\ttS^3-\ttS^4$}
\put(4,-0.5){$\tcodim\,\cO = 1$}
\end{picture}
\hsp{10pt}
\begin{picture}(17,4)
\put(0,0){\vector(1,0){3}} \put(0,0){\vector(0,1){3}}  
\put(0,1){\circle*{0.3}} \put(1,0){\circle*{0.3}} \put(1,2){\circle*{0.3}} \put(2,1){\circle*{0.3}}
\put(4,3.1){$\fl_\bR^\tss = \fsu(1,1)\times\fsu(1,1)$}
\put(4,1.9){$\sS' = \{\s_1+\s_2 , \s_2+\s_3\}$}
\put(4,0.7){$\sZ = 2\ttS^2-2\ttS^4$}
\put(4,-0.5){$\tcodim\,\cO = 4$}
\end{picture}
\vspace{10pt}\\
\begin{picture}(17,4)
\put(0,0){\vector(1,0){3}} \put(0,0){\vector(0,1){3}}  
\put(0,1){\circle*{0.3}} \put(1,0){\circle*{0.3}} \put(1,2){\circle*{0.3}} \put(2,1){\circle*{0.3}}
\put(4,3.1){$\fl_\bR^\tss = \fsu(1,1)\times\fsu(1,1)$}
\put(4,1.9){$\sS' = \{\s_1+\s_2 , \s_2+\s_4\}$}
\put(4,0.7){$\sZ = 2\ttS^2-2\ttS^3$}
\put(4,-0.5){$\tcodim\,\cO = 4$}
\end{picture}
\hsp{10pt}
\begin{picture}(17,4)
\put(0,0){\vector(1,0){3}} \put(0,0){\vector(0,1){3}}  
\put(0,0){\circle*{0.3}} \put(0,2){\circle*{0.3}} \put(1,1){\circle*{0.3}} \put(2,0){\circle*{0.3}} \put(2,2){\circle*{0.3}}
\put(4,3.1){$\fl_\bR^\tss = \fsu(1,1)\times\fsu(1,1)$}
\put(4,1.9){$\sS' = \{\s_2+\s_3 , \s_2+\s_4\}$}
\put(4,0.7){$\sZ = -2\ttS^1+2\ttS^2$}
\put(4,-0.5){$\tcodim\,\cO = 4$}
\end{picture}
\vspace{10pt}\\
\begin{picture}(17,4)
\put(0,0){\vector(1,0){3}} \put(0,0){\vector(0,1){3}}  
\put(0,0){\circle*{0.3}} \put(0,1){\circle*{0.3}} \put(1,0){\circle*{0.3}} \put(1,1){\circle*{0.3}} \put(1,2){\circle*{0.3}} \put(2,1){\circle*{0.3}} \put(2,2){\circle*{0.3}}
\put(4,3.1){$\fl_\bR^\tss = \fsu(1,1)\times\fsu(1,1)\times\fsu(1,1)$}
\put(4,1.9){$\sS' = \{\s_1+\s_2 , \s_2 + \s_3 , \s_2+\s_4\}$}
\put(4,0.7){$\sZ = -\ttS^1+3\ttS^2-\ttS^3-\ttS^4$}
\put(4,-0.5){$\tcodim\,\cO = 5$}
\end{picture}
\hsp{10pt}
\begin{picture}(17,4)
\put(0,0){\vector(1,0){3}} \put(0,0){\vector(0,1){3}}  
\put(0,0){\circle*{0.3}} \put(1,1){\circle*{0.3}} \put(2,2){\circle*{0.3}}
\put(4,3.1){$\fl_\bR^\tss = \fsu(2,1)$}
\put(4,1.9){$\sS' = \{\s_1+\s_2 , \s_2+\s_3+\s_4\}$}
\put(4,0.7){$\sZ = 2\ttS^2$}
\put(4,-0.5){$\tcodim\,\cO = 9$}
\end{picture}
\end{footnotesize}
\end{center}

\begin{remark}\label{R:egD4P2}
Note that the second and third ($G_\bR$--conjugacy classes of) horizontal $\tSL(2)$s in (the first row of) Example \ref{eg:D4P2} are \emph{not} distinguished by their Hodge diamonds.
\end{remark}
\end{example}

\begin{example}[Cattani--Kaplan]
In \cite[\S4]{MR496761} Cattani and Kaplan consider the case that $D$ is the period domain for Hodge numbers $\bh = (3,3,3)$, and find that there are five conjugacy classes of horizontal $\tSL(2)$s.  In the notation of Theorems \ref{T:cPMHS} and \ref{T:cSL2} (and the introduction to \S\ref{S:egPMHS}) those conjugacy classes are enumerated as follows.  We have $G_\bR = \tSO(3,6)$.  The grading element is $\ttE_\varphi = \ttS^3$, and $\sW^0$ is generated by the simple reflections $\{ (1) , (2) , (4)\}$.  
\begin{center}
\begin{footnotesize}
\setlength{\unitlength}{10pt}
\begin{picture}(12,4)
\put(0,0){\vector(1,0){3}} \put(0,0){\vector(0,1){3}}  
\put(0,1){\circle*{0.3}} \put(0,2){\circle*{0.3}} \put(1,0){\circle*{0.3}} \put(1,1){\circle*{0.3}} \put(1,2){\circle*{0.3}} \put(2,0){\circle*{0.3}} \put(2,1){\circle*{0.3}}
\put(4,3.1){$\fl_\bR^\tss = \fsu(1,1)$}
\put(4,1.9){$\sS' = \{ \s_3 \}$}
\put(4,0.7){$\sZ = -\ttS^2 + 2 \ttS^3 - \ttS^4$}
\put(4,-0.5){$\tcodim\,\cO = 1$}
\end{picture}
\hsp{10pt}
\begin{picture}(12,4)
\put(0,0){\vector(1,0){3}} \put(0,0){\vector(0,1){3}}  
\put(0,0){\circle*{0.3}} \put(0,2){\circle*{0.3}} \put(1,1){\circle*{0.3}} \put(2,0){\circle*{0.3}} \put(2,2){\circle*{0.3}}
\put(4,3.1){$\fl_\bR^\tss = \fsu(1,1)$}
\put(4,1.9){$\sS' = \{ \s_3+\s_4\}$}
\put(4,0.7){$\sZ = -2\ttS^2 + 2 \ttS^3$}
\put(4,-0.5){$\tcodim\,\cO = 3$}
\end{picture}
\hsp{10pt}
\begin{picture}(12,4)
\put(0,0){\vector(1,0){3}} \put(0,0){\vector(0,1){3}}  \put(0,0){\circle*{0.3}} \put(0,1){\circle*{0.3}} \put(0,2){\circle*{0.3}} \put(1,0){\circle*{0.3}} \put(1,1){\circle*{0.3}} \put(1,2){\circle*{0.3}} \put(2,0){\circle*{0.3}} \put(2,1){\circle*{0.3}} \put(2,2){\circle*{0.3}}
\put(4,3.1){$\fl_\bR^\tss = \fsu(1,1)\times\fsu(1,1)$}
\put(4,1.9){$\sS' = \{ \s_2+\s_3 \,,\, \s_3+\s_4 \}$}
\put(4,0.7){$\sZ = -\ttS^1-\ttS^2+3\ttS^3-\ttS^4$}
\put(4,-0.5){$\tcodim\,\cO = 4$}
\end{picture}
\vspace{5pt}\\
\begin{picture}(10,5.5)(0,-1.5)
\put(0,0){\vector(1,0){3}} \put(0,0){\vector(0,1){3}}  \put(0,0){\circle*{0.3}} \put(0,2){\circle*{0.3}} \put(1,1){\circle*{0.3}} \put(2,0){\circle*{0.3}} \put(2,2){\circle*{0.3}}
\put(0,-1.5){$\sS' = \{ \s_2+\s_3 \,,\, \s_3 + 2\s_4 \}$}
\put(3.5,2.4){$\fl_\bR^\tss = \fsu(2,1)$}
\put(3.5,0.9){$\sZ = -2\ttS^1 + 2\ttS^3$}
\put(3.5,-0.3){$\tcodim\,\cO = 7$}
\end{picture}
\hsp{30pt}
\begin{picture}(18,5.5)(0,-1.5)
\put(0,0){\vector(1,0){3}} \put(0,0){\vector(0,1){3}}  
\put(0,0){\circle*{0.3}} \put(1,1){\circle*{0.3}} \put(2,2){\circle*{0.3}}
\put(0,-1.5){$\sS' = \{ \s_1+\s_2+\s_3 \,,\, \s_2+\s_3+\s_4 \,,\, \s_3+2\s_4\}$}
\put(3.5,2.4){$\fl_\bR^\tss = \fsu(2,1)\times\fsu(1,1)$}
\put(3.5,0.9){$\sZ = 2\ttS^3$}
\put(3.5,-0.3){$\tcodim\,\cO = 12$}
\end{picture}
\end{footnotesize}
\end{center}
\end{example}

\appendix

\section{Non-compact real forms} \label{S:excp_gR}

The classical non-compact simple real forms $\fg_\bR$ that contain a compact Cartan subalgebra are listed in Table \ref{t:cl_gR} along with their maximal compact subalgebras; there $a,b >0$.  Recall that 
\[
  \fso(2) \ \simeq \ \bR \,,\quad
  \fsp(1) \ \simeq \ \fsu(2) \,,\quad
  \fsp(2) \ \simeq \ \fso(5) \,,\quad
  \fsu(4) \ \simeq \ \fso(6) \,.
\]
\begin{table}[!h]
\caption{The classical real forms}
\begin{center}
\renewcommand{\arraystretch}{1.3}
\begin{tabular}{c|ccccc}
  $\fg_\bR$ & $\fsu(a,b)$ & $\fsp(a,b)$  
            & $\fsp(n,\bR)$ & $\fso(2a,b)$ & $\fso^*(2n)$ \\
  \hline
  $\fk_\bR$ & $\mathfrak{s}(\fu(a)\op\fu(b))$ & $\fsp(a)\op\fsp(b)$ 
            & $\fu(n)$ & $\fso(2a)\op\fso(b)$ & $\fu(n)$
\end{tabular}
\end{center}
\label{t:cl_gR}
\end{table}

Table \ref{t:excp_gR} lists those non-compact real forms $\fg_\bR$ of the exceptional simple complex Lie algebras $\fg$ that contain a compact Cartan subalgebra.  The table also lists the the maximal compact Lie subalgebra $\fk_\bR \subset \fg_\bR$, and the real rank $\trank_\bR \,\fg_\bR$ of $\fg_\bR$.  In the first column we give the two common notations for the real forms; in the case of the second, the notation $X_n(s)$ indicates the complex form $X_n$ of the algebra, and $s = \tdim\,\fk^\perp_\bR - \tdim\,\fk_\bR$.
\begin{table}[!h]
\caption{The exceptional real forms}
\begin{center}
\renewcommand{\arraystretch}{1.3}
\begin{tabular}{|c|c|c|c|}
\hline
  $\fg$ & $\fg_\bR$ & $\fk_\bR$ & $\trank_\bR\,\fg_\bR$ \\ \hline
        & $\mathrm{E\,II} = E_6(2)$ & $\fsu(6) \op \fsu(2)$ & $4$ \\ \cline{2-4}
  \rb{$\fe_6$} & $\mathrm{E\,III} = E_6(-14)$ & $\fso(10)\op\bR$ & 2 \\ \hline
        & $\mathrm{E\,V} = E_7(7)$ & $\fsu(8)$ & $7$ \\ \cline{2-4}
  $\fe_7$ & $\mathrm{E\,VI} = E_7(-5)$ & $\fso(12)\op\fsu(2)$ & $4$ \\ \cline{2-4}
        & $\mathrm{E\,VII} = E_7(-25)$ & $\fe_6\op\bR$ & $3$ \\ \hline
        & $\mathrm{E\,VIII} = E_8(8)$ & $\fso(16)$ & $8$ \\ \cline{2-4}
  \rb{$\fe_8$} & $\mathrm{E\,IX} = E_8(-24)$ & $\fe_7\op\fsu(2)$ & $4$ \\ \hline
        & $\mathrm{F\,I} = F_4(4)$ & $\fsp(3)\op\fsu(2)$ & $4$ \\ \cline{2-4}
  \rb{$\ff_4$} & $\mathrm{F\,II}= F_4(-20)$ & $\fso(9)$ & $1$ \\ \hline
  $\fg_2$ & $\mathrm{G} = G_2(2)$ & $\fsu(2) \op \fsu(2)$ & $2$ \\
\hline
\end{tabular}
\end{center}
\label{t:excp_gR}
\end{table}

\def\cprime{$'$} \def\Dbar{\leavevmode\lower.6ex\hbox to 0pt{\hskip-.23ex
  \accent"16\hss}D}
\providecommand{\bysame}{\leavevmode\hbox to3em{\hrulefill}\thinspace}
\providecommand{\MR}{\relax\ifhmode\unskip\space\fi MR }
\providecommand{\MRhref}[2]{%
  \href{http://www.ams.org/mathscinet-getitem?mr=#1}{#2}
}
\providecommand{\href}[2]{#2}

\end{document}